\newcommand{\C}{\mathbb{C}}
\renewcommand{\P}{\mathbb{P}}
\newcommand{\Q}{\mathbb{Q}}
\newcommand{\R}{\mathbb{R}}
\renewcommand{\S}{\mathbb{S}}
\newcommand{\bp}{\bar{\partial}}
\newcommand{\fg}{\mathfrak{g}}
\newcommand{\fh}{\mathfrak{h}}
\newcommand{\ft}{\mathfrak{t}}
\newcommand{\fs}{\mathfrak{s}}
\newcommand{\cB}{\mathcal{B}}
\newcommand{\cE}{\mathcal{E}}
\newcommand{\cH}{\mathcal{H}}
\newcommand{\cL}{\mathcal{L}}
\newcommand{\cX}{\mathcal{X}}
\renewcommand{\d}{\delta}
\newcommand{\s}{\sigma}
\newcommand{\la}{\lambda}
\renewcommand{\phi}{\varphi}
\renewcommand{\leq}{\leqslant}
\renewcommand{\geq}{\geqslant}
\newcommand{\norm}[1]{\left\|#1\right\|}
\newcommand{\NA}{\mathrm{NA}}
\renewcommand{\Re}{\mathrm{Re}}
\DeclareMathOperator{\Aut}{Aut}
\DeclareMathOperator{\dd}{\sqrt{-1}\partial \bar{\partial}}
\DeclareMathOperator{\Hom}{Hom}
\DeclareMathOperator{\lct}{lct}
\DeclareMathOperator{\Ric}{Ric}
\DeclareMathOperator{\supp}{supp}
\DeclareMathOperator{\Tr}{Tr}
\numberwithin{equation}{section}       
\newtheorem{prop} {Proposition} [section]
\newtheorem{thm}[prop] {Theorem} 
\newtheorem{dfn}[prop] {Definition}
\newtheorem{lem}[prop] {Lemma}
\newtheorem{cor}[prop]{Corollary}
\newtheorem{exam}[prop]{Example}
\theoremstyle{remark}
\newtheorem*{ackn}{\bf{Acknowledgment}} 
\newtheorem{rk}[prop]{Remark}
\newcommand{\del}{\partial}
\newcommand{\dbar}{\overline{\del}}
\newcommand{\ddt}{\frac{d}{dt}}
\renewcommand{\leq}{\leqslant}
\renewcommand{\geq}{\geqslant}
\renewcommand{\epsilon}{\varepsilon}
\renewcommand{\phi}{\varphi}
\title[The inverse Monge-Amp\`ere flow]{The inverse Monge-Amp\`ere flow and applications to K\"ahler-Einstein metrics} 
\date{\today} 
\author[T. C. Collins]{Tristan C. Collins}
\address{Department of Mathematics\\
 Harvard University\\
 One Oxford St.\\
 Cambridge\\
 MA02138\\
 USA}
\email{tcollins@math.harvard.edu}
\author[T. Hisamoto]{Tomoyuki Hisamoto}
\address{Graduate School of Mathematics\\
  Nagoya University\\
  Furocho\\
  Chikusa\\
  Nagoya\\ 
  Japan}
\email{hisamoto@math.nagoya-u.ac.jp}
\author[R. Takahashi]{Ryosuke Takahashi}
\address{Mathematical Institute\\
 Tohoku University\\
 Aoba\\
 Aramaki\\
 Aoba-ku\\
 Sendai\\
 Japan}
\email{ryosuke.takahashi.a7@tohoku.ac.jp}
\begin{document}

\maketitle

\begin{abstract}
We introduce the inverse Monge-Amp\`ere flow as the gradient flow of the Ding energy functional on the space of K\"ahler metrics in $2 \pi \lambda c_1(X)$ for $\lambda=\pm 1$.  We prove the long-time existence of the flow.  In the canonically polarized case, we show that the flow converges smoothly to the unique K\"ahler-Einstein metric with negative Ricci curvature.  In the Fano case, assuming the $X$ admits a K\"ahler-Einstein metric, we prove the weak convergence of the flow to the K\"ahler-Einstein metric.  In general, we expect that the limit of the flow is related with the optimally destabilizing test configuration for the $L^2$-normalized non-Archimedean Ding functional.  We confirm this expectation in the case of toric Fano manifolds.
\end{abstract}

\section{Introduction}

Let $(X,L)$ be a polarized K\"ahler manifold with $-K_{X}= \lambda L$ with $\lambda \in \{-1,0,+1\}$.  Beginning with the work of Calabi it has been a fundamental problem in K\"ahler geometry to prove the existence of a K\"ahler metric $\omega \in 2 \pi c_{1}(L)$ satisfying Einstein's equation
\[
\Ric(\omega) = \lambda \omega.
\]
When $\lambda =0,-1$, this problem was famously resolved by Yau \cite{Y} who showed that there is {\em always} a K\"ahler-Einstein metric in the class $2 \pi c_1(L)$ (see also Aubin \cite{A} when $\lambda=-1$).  In contrast, when $\lambda =+1$-- the Fano case-- it has been known since work of Matsushima \cite{Mat57} that K\"ahler-Einstein metrics need not exist; for example, the blow-up of $\mathbb{P}^2$ in a point does not admit any K\"ahler-Einstein metric.  The Yau-Tian-Donaldson conjecture predicts that the existence of a K\"ahler-Einstein metric in the class $2 \pi c_1(X)$ is equivalent to the K-stability of $(X,-K_X)$.  This conjecture was resolved in fundamental work of Chen-Donaldson-Sun \cite{CDS15a, CDS15b, CDS15c}. 

\begin{thm}[Chen-Donaldson-Sun]
There exists a K\"ahler-Einstein metric in the class $2 \pi c_1(X)$ if and only if $(X,-K_{X})$ is $K$-polystable.
\end{thm}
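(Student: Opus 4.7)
The plan is to treat the two implications separately. For the easier direction, that existence of a K\"ahler-Einstein metric implies $K$-polystability, I would follow the energy-theoretic route developed by Berman, building on earlier work of Tian, Donaldson, Stoppa, and Paul-Tian. To each test configuration $(\cX,\cL)$ for $(X,-K_X)$ one associates a weak geodesic ray $\{\phi_t\}_{t \geq 0}$ in the space of K\"ahler potentials, and identifies the asymptotic slope of the Ding functional $\cD$ along this ray with the non-Archimedean Ding invariant $\DF^{\NA}(\cX,\cL)$. Since a K\"ahler-Einstein metric minimizes $\cD$ and $\cD$ is convex along weak geodesics, the slope at infinity is nonnegative; equality then forces the ray to come from a one-parameter subgroup of $\Aut(X,-K_X)$, which is precisely $K$-polystability.

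The substantive direction is that $K$-polystability implies existence. Here I would use the continuity method through K\"ahler-Einstein metrics with conic singularities, following the approach of Donaldson and Chen-Donaldson-Sun. After fixing a smooth divisor $D \in \abs{-mK_X}$ for some large $m$, I would search for K\"ahler metrics $\omega_t$ with suitable cone angle along $D$ satisfying
\[
\Ric(\omega_t) = t\,\omega_t + \frac{1-t}{m}[D], \qquad t \in (0,1].
\]
Openness along the path follows from Donaldson's linear theory in conic H\"older spaces combined with an implicit function argument, and existence for small $t > 0$ can be arranged directly. Let $t_\infty$ denote the supremum of those $t$ for which the equation is solvable; the goal is to show $t_\infty = 1$. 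I would argue by contradiction, extracting from $\omega_{t_i}$ with $t_i \nearrow t_\infty$ an algebraic degeneration of $X$ that destabilizes it.

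The technical heart of the argument is compactness and algebraicity. The main estimate is the partial $C^0$ estimate: for $k$ sufficiently large, the Bergman kernel of $(-kK_X,\omega_{t_i})$ is uniformly bounded below. In the smooth case this follows from H\"ormander $L^2$ theory, but in the conic setting it requires a delicate tangent-cone analysis. Combined with the Cheeger-Colding-Tian compactness theory for noncollapsed K\"ahler-Einstein metrics with Ricci lower bounds, this forces $\omega_{t_i}$ to converge in the Gromov-Hausdorff sense to a limit $(Z,d_Z)$; Donaldson-Sun's algebraicity theory then identifies $Z$ as the underlying space of a normal $\Q$-Fano variety embedded in $\P^{N_k}$ via Bergman sections, and realizes it as the central fibre of an algebraic test configuration $(\cX,\cL)$ obtained as a flat limit inside the relevant Hilbert scheme.

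The final step is to verify that this limiting test configuration destabilizes $(X,-K_X)$. Using the conic equation and the hypothesis $t_\infty < 1$, one computes the asymptotic slope of $\cD$ along the associated geodesic ray and obtains $\DF^{\NA}(\cX,\cL) \leq 0$; the presence of the nontrivial cone divisor $D$, together with an analysis of $\Aut(Z)$, rules out the product case and gives strict inequality, contradicting $K$-polystability. The main obstacle will be the partial $C^0$ estimate in the conic setting and the algebraicity of the Gromov-Hausdorff limit: both require metric-geometric tools (tangent cones, the ``good cover'' construction of Donaldson-Sun, singular $L^2$-H\"ormander theory) that go well beyond what is needed either in the smooth K\"ahler-Einstein situation or in the purely variational approach.
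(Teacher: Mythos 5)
This statement is quoted in the paper as a known theorem of Chen--Donaldson--Sun \cite{CDS15a, CDS15b, CDS15c}; the authors give no proof of it, so there is nothing internal to compare your argument against. What you have written is a faithful high-level roadmap of the original proof: the variational/slope argument for ``existence $\Rightarrow$ polystability'' and the conic continuity method, partial $C^0$ estimate, Cheeger--Colding--Tian compactness, and Donaldson--Sun algebraicity for the converse. As a proof, however, it is only a roadmap: every genuinely hard ingredient (the partial $C^0$ estimate in the presence of cone singularities, the identification of the Gromov--Hausdorff limit with a normal $\Q$-Fano central fibre of an algebraic test configuration, the reductivity of its automorphism group) is invoked by name rather than established, and each of these occupies a substantial part of the three CDS papers.

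Two points where your outline as written departs from, or glosses over, the actual arguments. First, in the destabilization step CDS do not compute the slope of the Ding functional along a geodesic ray; they compute the (log-)Futaki invariant of the limiting test configuration directly via the automorphism group of the limit, and showing this invariant is $\leq 0$ while the configuration is nontrivial is itself delicate. The Ding-functional slope formula and the identification with the non-Archimedean invariant $D^{\NA}(\cX,\cL)$ belong to the later variational picture of Berman and Berman--Boucksom--Jonsson \cite{Berm16, BBJ15, BHJ15}, which is the framework this paper actually uses; note also that $D^{\NA}$ and the Donaldson--Futaki invariant agree only for sufficiently mild (e.g.\ special) degenerations, so conflating them requires justification. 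Second, in the easy direction the passage from ``slope zero'' to ``product configuration'' is not an automatic consequence of convexity: one needs the Bando--Mabuchi/Berndtsson uniqueness theory to conclude that a geodesic ray along which the functional is affine is generated by a holomorphic vector field, and for general (non-special) test configurations even the semistability statement requires Berman's lower bound on the slope in terms of $D^{\NA}$. Neither issue makes your strategy wrong---it is the correct strategy---but a proof would have to supply these arguments rather than cite their existence.
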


An important open question in this direction is to understand what happens when $(X,-K_{X})$ is not $K$-stable.  In analogy with the Harder-Narasimhan filtration of an unstable vector bundle, or with optimally destabilizing one-parameter subgroups in GIT, one expects an optimal degeneration of unstable varieties.  This problem has recently attracted a great deal of attention.  One approach to this problem has been to use the K\"ahler-Ricci flow.  Building on work of Chen-Wang \cite{CW14},  Chen-Sun-Wang \cite{CSW15} showed that the K\"ahler-Ricci flow produces a degeneration of $X$.  Dervan-Sz\'ekelyhidi \cite{DS17} showed that this degeneration could be interpreted as ``maximally destabilizing" in the sense that it maximizes the so-called $H$-invariant, an algebro-geometric invariant associated to the asymptotics of the $H$-functional.  As far as we can tell, the $H$-functional appeared originally in the work of Ding-Tian \cite{DT92b}.  The functional was shown to be monotonic along the K\"ahler-Ricci flow in \cite{P, PSSW09}, and was studied as a functional on the space of K\"ahler metrics by He \cite{He}.  As pointed out by Dervan-Sz\'ekelyhidi \cite[Lemma 2.5]{DS17}, the $H$-invariant only detects $K$-semistability.  In this paper we introduce a new approach to this problem based on a natural parabolic flow.

We first provide a conceptual motivation for our approach.  Work of Phong-Ross-Sturm \cite{PRS08}, Paul-Tian \cite{PT09}, Berman \cite{Berm16}, and more recently Berman-Boucksom-Jonsson \cite{BBJ15} has shown that there is a deep connection between K\"ahler-Einstein metrics, $K$-stability, and a certain infinite dimensional variational framework.  Fix once and for all a K\"ahler metric $\omega_0 \in 2 \pi c_1(L)$.  Then any K\"ahler metric in $2 \pi c_1(L)$ can be written as $\omega_\phi=\omega_0 + \dd \phi $ for some $\phi \in C^\infty(X; \R)$ which is unique up to a constant.  It is therefore natural to introduce the space
\begin{equation}\label{eq: spaceOfMetrics}
\cH_{\omega_0} := \left\{ \phi \in C^\infty(X; \R) : \omega_0 +\dd \phi >0 \right\}
\end{equation}
which we regard as the space of K\"ahler metrics in $2 \pi c_1(L)$; to ease notation we will often drop the subscript $\omega_0$. For $\phi \in \cH$ define the normalized Ricci potential $\rho_{\phi}$ by 
\begin{equation*}
\Ric{\omega_\phi} -\la \omega_\phi = \dd \rho_\phi, \qquad \int_X e^{\rho_\phi} \omega_\phi^n =V. 
\end{equation*}
Here the volume $V:= \int_X \omega_\phi^n$ of $(X, L)$ is a topological constant. By a standard calculation we have
\begin{equation}\label{eq: Ricci pot}
\rho_{\phi} = -\lambda \phi - \log \left( \frac{\omega_{\phi}^{n}}{\omega_0^n}\right) +\rho_0 +c
\end{equation}
for a constant $c$.  Unless otherwise noted, we will omit the subscript and denote the metric by $\omega=\omega_\phi$ and its Ricci potential by $\rho$ for simplicity. Define an energy functional $D:\cH \rightarrow \R$ by its variation according to
\begin{equation}\label{Ding variation}
\d D(\phi) = \frac{1}{V}\int_X (\d \phi) (e^\rho-1) \omega_\phi^n.   
\end{equation}
This natural energy functional was known to experts in the field in the '70s \cite{Y1}, but seems to have be written down first in the paper of Bando-Mabuchi \cite{BM85}, \cite{BM86} and subsequently by Ding \cite{D88}. It was studied by Ding-Tian \cite{DT92}, and is now referred to as the Ding functional.  The functional $D$ is a natural energy functional which has K\"ahler-Einstein metrics as its critical points.  Explicitly, fixing the reference metric $\omega_0$ and integrating, we can write the Ding functional as
\[
D(\phi) =- \frac{1}{\lambda} \log \left( \int_{X} e^{-\lambda \phi +\rho_0} \omega_0^{n}\right)  -E(\phi) 
\]
when $\lambda=-1,+1$, and
\[
D(\phi) =  \frac{1}{V}\int_{X} \phi e^{\rho_0}\omega_0^n -E(\phi) 
\]
when $\lambda =0$.  Here $E(\phi)$ is the Aubin-Yau functional defined by the variational formula
\[
\delta E(\phi) =  \frac{1}{V}\int_{X} \delta \phi \omega_{\phi}^{n}.
\]

Following the work of Mabuchi \cite{Mab87}, Semmes \cite{Sem92} and Donaldson \cite{Don99}, we endow the space $\cH$ with the structure of an infinite dimensional Riemannian manifold. Define an inner product on $T_{\phi} \cH \cong C^{\infty}(X; \R)$ by
\[
\langle f ,g \rangle_{\phi} = \int_{X} fg \omega_{\phi}^{n}.
\]
This Riemannian structure leads to a notion of geodesics, and the space $\cH$ can be viewed as an infinite dimensional symmetric space.  Let us restrict our attention to the Fano case, when $\lambda =1$.  A fundamental result of Berndtsson \cite{Bern11} says that the Ding functional is convex along geodesics.  Donaldson \cite{Don15} showed that the Ding functional is the Kempf-Ness functional for a certain infinite dimensional GIT problem, and in particular, K\"ahler-Einstein metrics are zeroes of the associated moment map.  In this picture, geodesics in the space of K\"ahler metrics are the natural analog of one-parameter subgroups, and the standard GIT framework suggests that we should study the slope of the Ding functional at infinity along geodesic rays in the space $\cH$.  Furthermore, the gradient flow of the Ding functional provides a natural approach to constructing optimal destabilizers.  

A similar GIT framework exists in the setting of holomorphic vector bundles.  In this case the Kempf-Ness functional is given by the Donaldson functional on the space of hermitian metrics.  The gradient flow of the Donaldson functional is the Donaldson heat flow which is known to converge to the Harder-Narasimhan filtration \cite{Don85, UY, DW, J15, J16}.

Going back to work of Phong-Sturm \cite{PS06, PS07}, it has been understood that one way to produce geodesic rays in $\cH$ is to use {\em test configurations} $(\cX,\cL)$ (see section~\ref{sec: stable}). This provides a link between the formal GIT picture and algebro-geometric stability.  Building on work of Phong-Ross-Sturm \cite{PRS08}, and Paul-Tian \cite{PT09},  Berman \cite{Berm16} described the asymptotic slope of the Ding functional along a geodesic ray associated to a test configuration.  Berman showed that when the singularities of the test configuration are sufficiently mild, the slope of the Ding functional is in fact equal to the Donaldson-Futaki invariant, and hence the Ding functional detects $K$-stability.  Boucksom-Hisamoto-Jonsson \cite{BHJ15} showed that the asymptotic slope of the Ding functional along a geodesic associated to a test-configuration is equivalent to to the algebraic invariant $D^\NA(\cX, \cL)$, the non-Archimedean counterpart of the Ding functional.  
 
Motivated by this infinite dimensional GIT picture, we introduce the Ricci-Calabi energy
\begin{equation}\label{Ricci Calabi}
R(\phi) =  \frac{1}{V}\int_X (e^\rho-1)^2 \omega_\phi^n.
\end{equation}
By a simple application of H\"older's inequality, the second author proved the lower bound estimate for the Ricci-Calabi energy \cite{His12}; 
\begin{equation}\label{lower bound of Ricci-Calabi}
\inf_\phi R(\phi)^{\frac{1}{2}} \geq \sup_{(\cX, \cL)}\frac{-D^\NA(\cX, \cL)}{\norm{(\cX, \cL)}}, 
\end{equation} 
where the right-hand side is the normalized non-Archimedean Ding invariant of $(\cX,\cL)$.  This can be viewed as an infinite dimensional generalization of the moment-weight inequality of GIT.  For holomorphic vector bundles on a curve the moment-weight inequality is the Atiyah-Bott formula \cite{AB}, and this was extended to higher dimensions by Jacob \cite{J16}.  Donaldson \cite{Don05} proved a similar inequality in the setting of constant scalar curvature K\"ahler metrics which has played a fundamental role in the development of the Yau-Tian-Donaldson conjecture.

In this paper we study the gradient flow of the Ding functional with respect to the Riemannian structure on $\mathcal{H}$.  That is, we study the flow
\begin{equation}\label{Mabuchi flow}
\ddt \phi = 1-e^{\rho}.
\end{equation}
We refer to this flow as the inverse Monge-Amp\`ere flow, or $MA^{-1}$-flow for short, but note that Mabuchi originally studied self-similar solutions of the flow \cite{Mab01}.  Note that the normalized K\"ahler-Ricci flow can be written as
\[
\ddt \phi = -\rho
\]
and hence, at a formal level, the inverse Monge-Amp\`ere flow is similar to the K\"ahler-Ricci flow whenever the Ricci potential is small.  On the other hand, when the Ricci potential is not small (as will be the case when $X$ does not admit a K\"ahler-Einstein metric), the flows are rather different.

Writing the $MA^{-1}$-flow in terms of the potential leads to the parabolic PDE
\begin{equation}\label{eq: DMflow}
\ddt \phi = 1- \frac{\omega_0^n}{\omega_{\phi}^n} e^{-\lambda\phi + \rho_0 +c(t)},
\end{equation}
for a time dependent constant $c(t)$.  From a PDE point of view, the $MA^{-1}$-flow, and the K\"ahler-Ricci flow can be viewed as different choices of parabolic complex Monge-Amp\`ere equation, with the $MA^{-1}$-flow being more in line with the parabolic Monge-Amp\`ere equation proposed by Krylov \cite{Kry76}.  We remark that inverse Monge-Amp\`ere flows have recently appeared in several contexts in complex geometry \cite{PPZ, PPZa}.  The flow~\eqref{eq: DMflow} is clearly parabolic, and hence the short-time existence follows automatically from general theory.  When $\la=0$ the long-time existence and convergence of the flow was obtained by Cao-Keller \cite{CK13}, with similar results in \cite{FLM11, FL12}.  We shall focus only on the case when $\la= \pm 1$.  When $\la =-1$ we prove

\begin{thm}\label{thm: mainCanonPolar}
Let $X$ be a canonically polarized K\"ahler manifold.  For any initial metric $\omega_{\phi_0}$ the inverse Monge-Amp\`ere flow exists for all time and converges in the $C^{\infty}$ topology to the unique K\"ahler-Einstein metric in $-2 \pi c_1(X)$.
\end{thm}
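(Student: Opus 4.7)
Throughout we set $\lambda=-1$, and write the flow~\eqref{eq: DMflow} in potential form as
\begin{equation*}
\partial_t\phi \;=\; 1 - \frac{\omega_0^n}{\omega_\phi^n}\,e^{\phi + \rho_0 + c(t)},
\end{equation*}
where $c(t)$ is determined by $\int_X e^\rho\omega_\phi^n = V$. From~\eqref{Ding variation} one has $\tfrac{d}{dt}D(\phi) = -R(\phi)\le 0$, so the Ding functional is nonincreasing. Since $\int_X(1-e^\rho)\omega_\phi^n = 0$, differentiating the Aubin-Yau energy yields $\tfrac{d}{dt}E(\phi) = 0$, so $E$ is \emph{conserved} along the flow, an identity special to the sign $\lambda=-1$. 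Combined with $D(\phi) = \log\!\int_X e^{\phi+\rho_0}\omega_0^n - E(\phi)$, this also shows $\log\!\int_X e^{\phi+\rho_0}\omega_0^n$ is nonincreasing.

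\emph{Uniform $C^0$ estimate.} This is where the main difficulty lies. In the canonically polarized case $D$ is coercive, hence bounded below, so by the previous paragraph $\log\!\int_X e^{\phi+\rho_0}\omega_0^n$ lies in a bounded interval. Setting $\tilde\phi := \phi - \sup_X\phi$, the standard $L^1$-boundedness of normalized $\omega_0$-psh functions combined with Jensen's inequality places $\log\!\int_X e^{\tilde\phi+\rho_0}\omega_0^n$ in a bounded interval independent of $t$; comparing the two bounds forces $\sup_X\phi(\cdot,t)$ to remain bounded, which in turn bounds the constant $c(t)$. At the spatial minimum of $\phi$ we have $\omega_\phi\ge\omega_0$ so $\omega_0^n/\omega_\phi^n\le 1$, and the flow gives
\[
\partial_t\phi_{\min} \;\ge\; 1 - e^{\phi_{\min}+\max\rho_0+c(t)},
\]
whose right-hand side is strictly positive once $\phi_{\min}$ is sufficiently negative. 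Hence $\|\phi(\cdot,t)\|_{C^0(X)}$ is uniformly bounded. The argument crucially uses conservation of $E$; a purely max-principle approach stalls because controlling $c(t)$ requires first controlling $\phi$.

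\emph{Higher-order estimates and long-time existence.} A Yau-type maximum principle applied to $\log(n+\Delta_{\omega_0}\phi)-A\phi$ along the parabolic flow, combined with the $C^0$ bound and the favorable sign $\lambda=-1$, yields a uniform bound on $\Delta_{\omega_0}\phi$. Calabi's $C^3$ estimate (or Evans-Krylov for the Monge-Amp\`ere term) followed by standard parabolic Schauder bootstrap then produces uniform $C^k$ bounds for every $k$. Short-time existence combined with these a priori bounds gives existence for all $t\ge 0$.

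\emph{Convergence.} Integrating $\tfrac{d}{dt}D=-R$ and using the lower bound on $D$ gives $\int_0^\infty R(\phi(t))\,dt<\infty$, hence a sequence $t_k\to\infty$ with $R(\phi(t_k))\to 0$. A subsequential smooth limit $\phi_\infty$ provided by the previous step satisfies $e^{\rho_{\phi_\infty}}\equiv 1$, so $\omega_{\phi_\infty}$ is a K\"ahler-Einstein metric. Uniqueness of such metrics in the canonically polarized case identifies $\phi_\infty$ as the unique K\"ahler-Einstein potential and promotes subsequential to full convergence; strict convexity of $D$ near its critical point (which holds for $\lambda=-1$) together with the uniform higher-order control then upgrades this to $C^\infty$ convergence with an exponential rate.
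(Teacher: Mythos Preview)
Your $C^0$ argument differs from the paper's but is essentially correct: you invoke the lower bound on $D$ (which for $\lambda=-1$ follows from Jensen's inequality and $J\ge 0$, so is not circular) together with conservation of $E$ to bound $c(t)$, and then bound $\sup_X\phi$ and $\inf_X\phi$. The paper instead first bounds $\rho$ (Lemma~\ref{lem: rhobdcanon}) and uses this to derive a differential inequality for $\dot D$ (Proposition~\ref{prop: DingODE}) yielding exponential decay $0\le -\dot D\le Be^{-\delta t}$; this is stronger and feeds directly into a Cauchy-in-$L^1$ convergence argument, whereas your subsequence-plus-uniqueness route is correct but less quantitative.

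There is, however, a genuine gap in your higher-order step. The parabolic $C^2$ estimate (Proposition~\ref{prop: C2est}) reads
\[
\log\Tr_{\omega_0}\omega_\phi \;\le\; A + C\bigl(\phi - \inf\phi\bigr) - \inf_{X\times[0,T)}\bigl(\lambda\phi + \rho + c(t)\bigr),
\]
so for $\lambda=-1$ you need a \emph{lower} bound on $\rho$ to control $\Tr_{\omega_0}\omega_\phi$, and then an \emph{upper} bound on $\rho$ to bound $\omega_\phi^n/\omega_0^n = e^{\phi+\rho_0+c(t)-\rho}$ from below and conclude $\omega_\phi\ge c\,\omega_0$. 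Your $C^0$ bound on $\phi$ alone gives neither, since $\rho$ depends on second derivatives of $\phi$. The ``favorable sign $\lambda=-1$'' does not circumvent this: at the maximum of $\log\Tr_{\omega_0}\omega_\phi - A\phi$ one still has to pass from $\Tr_{\omega_\phi}\omega_0$ to $\Tr_{\omega_0}\omega_\phi$ via the determinant ratio, which is precisely where $\rho$ enters. Evans--Krylov likewise needs uniform parabolicity, i.e.\ two-sided bounds on $e^\rho$.

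The paper supplies the missing $\rho$-bounds by applying the maximum principle to the evolution
\[
\bigl(\partial_t - e^\rho\Delta_\omega\bigr)e^\rho \;=\; e^\rho\bigl(1 - e^\rho + \dot c\bigr).
\]
For $\lambda=-1$ one has $\dot c\ge 0$ (Lemma~\ref{lem: BasicProps}), which forces $\inf_X\rho$ to be attained at $t=0$; convexity of $t\mapsto D(\phi(t))$ along the flow (Corollary~\ref{cor: DconvFlow}) gives $\dot c\le\dot c(0)$, which bounds $\sup_X\rho$. This step is the essential ingredient absent from your outline.
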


This gives another proof of Yau \cite{Y} and Aubin's \cite{A} existence theorem for K\"ahler-Einstein metrics with negative Ricci curvature.  In the Fano setting we prove

\begin{thm}\label{thm: mainFano}
Let $X$ be a Fano manifold.  For any initial metric $\omega_{\phi_0}$ the inverse Monge-Amp\`ere flow starting from $\phi_0$ exists for all time.  Furthermore, if $X$ has no holomorphic vector fields, and admits a K\"ahler-Einstein metric, then $\phi(t)$ converges to the K\"ahler-Einstein potential $\phi_{KE}$ in $L^{p}$ for all $p$, and in the strong topology.  If $X$ is K\"ahler-Einstein with holomorphic vector fields then the flow converges weakly in the sense of currents, modulo the action of $\Aut_{0}(X)$, to a K\"ahler-Einstein metric.
\end{thm}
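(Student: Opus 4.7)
The proof splits naturally into (i) long-time existence without any stability hypothesis, (ii) convergence when $X$ admits a K\"ahler-Einstein metric and $\Aut_0(X)$ is trivial, and (iii) convergence modulo $\Aut_0(X)$ when $\Aut_0(X)$ is nontrivial. The backbone of the argument is the gradient-flow identity
\begin{equation*}
\ddt D(\phi(t)) = \frac{1}{V}\int_X (1-e^\rho)(e^\rho-1)\,\omega_\phi^n = -R(\phi(t)),
\end{equation*}
obtained by pairing \eqref{Ding variation} with $\dot\phi = 1-e^\rho$; after integrating, $\int_0^T R(\phi(t))\,dt = D(\phi_0) - D(\phi(T))$, so whenever $D$ is bounded below along the flow, $R$ is integrable in time.

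For long-time existence I would prove a priori $C^\infty$ estimates on $[0,T]$ for each finite $T$; smooth continuation of the quasilinear parabolic Monge-Amp\`ere equation \eqref{eq: DMflow} then follows from Krylov-Safonov and Schauder theory. The estimates follow the usual hierarchy: first, a $C^0$ bound on $\phi$ and control of the normalizing constant $c(t)$ by the maximum principle, exploiting that at a spatial maximum of $\phi$ one has $\omega_0^n/\omega_\phi^n \geq 1$ and hence $\dot\phi \leq 1 - e^{-\phi + \rho_0 + c(t)}$, which yields a barrier via ODE comparison; second, a pointwise bound on $e^\rho$ (equivalently on $\dot\phi$) from the heat-type equation satisfied by $e^\rho$; third, a Laplacian estimate of Aubin-Yau type applied to $\log \tr{\omega_0}{\omega_\phi} - A\phi$, with the bound on $\rho$ controlling the analogue of the Ricci lower bound in the standard computation; and finally Evans-Krylov for $C^{2,\alpha}$, after which Schauder gives $C^\infty$. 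The main obstacle is the $C^0$ estimate, since unlike the K\"ahler-Ricci flow the potential is not tied directly to a geometric quantity; the inverse Monge-Amp\`ere form of the right-hand side of \eqref{eq: DMflow} is what makes the maximum principle effective.

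Under the hypotheses of the second part, Tian's properness theorem provides a coercivity estimate $D(\phi) \geq \alpha J(\phi) - C$ on $\sup$-normalized potentials, so the monotonicity of $D$ forces $J(\phi(t))$ to remain uniformly bounded and in particular $\int_0^\infty R(\phi(t))\,dt < \infty$. Choose $t_k\to\infty$ with $R(\phi(t_k))\to 0$; the resulting sequence lies in a $d_1$-bounded subset of the pluripotential space $\cE^1$, and by compactness a subsequence converges weakly to a potential $\phi_\infty$ with $e^{\rho_{\phi_\infty}}=1$, \ie a K\"ahler-Einstein metric. Bando-Mabuchi uniqueness identifies $\phi_\infty$ with $\phi_{KE}$ after suitable normalization. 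To upgrade from subsequential weak convergence to convergence of the full flow in the strong ($d_1$) topology of $\cE^1$, I would appeal to Berndtsson's convexity of $D$ along weak geodesics together with coercivity: the gradient trajectory of a geodesically convex, proper functional on $(\cE^1,d_1)$ must converge to its unique minimizer. $L^p$ convergence for every $p$ then follows from the $d_1$ convergence combined with the uniform $L^\infty$ bound from step (i).

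When $X$ is K\"ahler-Einstein but $\Aut_0(X)$ is nontrivial, the same scheme works after replacing Tian's inequality with the Tian-Zhu Moser-Trudinger inequality, which gives properness of the Ding functional modulo the action of $\Aut_0(X)$ and the commuting Lie algebra of holomorphic vector fields. One then extracts, along a sequence $t_k\to\infty$ with $R(\phi(t_k))\to 0$, automorphisms $\sigma_{t_k}\in\Aut_0(X)$ so that $\sigma_{t_k}^*\omega_{\phi(t_k)}$ converges weakly as a current to a K\"ahler-Einstein metric. The loss of strong/$L^p$ convergence in this setting reflects the fact that the $\Aut_0(X)$ action is not continuous in the $d_1$ topology at infinity, so only weak convergence modulo $\Aut_0(X)$ can be expected in general.
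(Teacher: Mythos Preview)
Your long-time existence argument has a genuine gap: the maximum-principle/ODE approach you propose for the $C^0$ bound on $\phi$ and the pointwise bound on $e^\rho$ does not work in the Fano case. The paper addresses exactly this point at the start of Section~4. The evolution equation for $\dot\phi$ reads
\[
\left(\ddt - e^\rho\Delta_\omega\right)\dot\phi = (1-\dot\phi)(\dot\phi - \dot c),
\]
and since $\dot c \le 0$ when $\lambda = +1$, the comparison ODE $\dot f = (1-f)(f+a)$ with $a>0$ has solutions diverging to $-\infty$ in finite time; this rules out a maximum-principle upper bound for $e^\rho$. Your proposed barrier at the spatial minimum of $\phi$ fares no better: there one only gets $\dot\phi_{\min} \ge 1 - C e^{-\phi_{\min}}$, and the ODE $\dot f = 1 - Ce^{-f}$ blows down to $-\infty$ in finite time. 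So neither the lower bound on $\phi$ nor the upper bound on $\rho$ can be obtained this way.

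The paper's route is entirely different and uses pluripotential compactness. Constancy of the Aubin--Yau energy gives $-\int_X \tilde\phi\,\omega_{\tilde\phi}^n \le C(1+t)$ for the sup-normalized potential, so $\tilde\phi(t)$ stays in $\mathcal{E}^1$ on finite intervals. If $\inf_X\phi(t_i)\to -\infty$ on $[0,T)$, a weak limit $\phi_T$ exists in $\mathcal{E}^1$ and has zero Lelong numbers (Guedj--Zeriahi); Skoda then gives $e^{-p\phi_T}\in L^1$ for all $p$, Demailly--Koll\'ar semi-continuity transfers this to uniform $L^2$ bounds on $e^{-\phi(t_i)}$, and Kolodziej's estimate applied to $\omega_{\phi(t_i)}^n = e^{-\phi(t_i)}\cdot(\text{bounded})\cdot\omega_0^n$ yields a uniform oscillation bound---contradiction. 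Only after this does the $C^2$ estimate (which needs just the \emph{lower} bound on $\rho$, available from Lemma~4.1) go through, and the upper bound on $\rho$ is then extracted by a maximum principle applied to $\rho - \epsilon\phi$, using the gradient bound on $\phi$ that the $C^2$ estimate has just provided.

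There is a secondary issue in your part~(ii): you invoke ``the uniform $L^\infty$ bound from step~(i)'' to deduce $L^p$ convergence, but the a priori estimates in the existence proof depend on $T$ and are not uniform as $t\to\infty$. In the paper the $L^p$ convergence again comes from Demailly--Koll\'ar semi-continuity (the limit has zero Lelong numbers), and the identification of the limit as K\"ahler--Einstein is not by passing $R\to 0$ through a weak limit but by showing the relative entropy $H_{V^{-1}\omega_\phi^n}(\nu_\phi)\to 0$ and applying Pinsker's inequality to conclude $V^{-1}\omega_{\phi_\infty}^n = \nu_{\phi_\infty}$ as measures, followed by Sz\'ekelyhidi--Tosatti regularity. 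Your part~(iii) is closer in spirit, though the paper uses an evolution variational inequality to show $D(\phi(t))\to\inf D$ rather than extracting a subsequence with $R\to 0$.
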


We refer the reader to section~\ref{sec: DMflowFano} for the definition of the strong topology.  In contrast with the setting of the K\"ahler-Ricci flow, the long-time existence of the $MA^{-1}$-flow requires some non-trivial work as ODE techniques and the maximum principle are insufficient to rule out finite time singularities.  We note that the proof of the convergence of the flow to the K\"ahler-Einstein metric depends only on the coercivity (or modified coercivity when $\Aut_0(X) \ne 0$) of the Ding functional.

Next, we study the behavior of the $MA^{-1}$-flow on toric Fano manifolds in the spirit of Sz\'ekelyhidi's work on the Calabi flow \cite{Sze08}.  Let $(X,-K_{X})$ be a toric Fano manifold, and let $P$ denote the associated moment polytope.  In this setting each test configuration is identified with a unique rational piecewise-linear convex function $f : P \rightarrow \R$ \cite{Don02}.  It was shown by Yao \cite{Yao17} that the non-Archimedean Ding functional, and norm of the test configuration corresponding to $f$ are given by
\begin{equation*}
D^\NA(f)= -f(0) + \frac{1}{V_P} \int_Pf, \ \ \  \norm{f}^2= \int_Pf^2. 
\end{equation*} 
Let $e: P \rightarrow \mathbb{R}$ be the affine function associated to the extremal vector field (in the sense of Calabi) in $\mathfrak{t} = {\rm Lie}(\mathbb{C}^{*})^n$ and let $\ell:=e+1$. If the modified Ding invariant 
\begin{equation*}
D^\NA_{\ell}(f):=-f(0)  +\frac{1}{V_P} \int_P f  \ell
\end{equation*} 
is semipositive for any rational piecewise-linear convex function $f$, we say that $P$ is relatively Ding-semistable.  Yao \cite{Yao17} showed that if $(X,-K_{X})$ is not relatively Ding-semistable, then, up to scaling, there exists a unique convex function $d$, which is the maximum of two affine functions, achieving equality in~\eqref{lower bound of Ricci-Calabi}. We call $d$ the optimal destabilizer. We remark that $d$ may not be rational \cite{Yao17}, but does give rise to a degeneration.

\begin{thm}\label{thm: toric thm}
Let $X$ be a toric Fano manifold with associated moment polytope $P$.  For any $(S^1)^n$ invariant K\"ahler metric $\omega$, we write $\sigma$ for the function $e^{\rho}$ written on $P$ via the moment map associate to $\omega$.
\begin{enumerate}
\item If $P$ is relatively Ding-semistable, then $\s-1$ converges along the $MA^{-1}$-flow to the extremal affine function $e$ in $L^2(P)$. 
\item If $P$ is relatively Ding-unstable, then denoting by $d$ the optimal destabilizer for the $L^2$-normalized Ding invariant, $\s-1$ converges along the $MA^{-1}$-flow to $d+e$ in the Hilbert space $L^{2}(P)$.
\end{enumerate}
In particular, along the $MA^{-1}$-flow starting from any $(S^1)^n$ invariant metric we have
\[
\lim_{t\rightarrow \infty} R(\phi(t))^{1/2} =  \sup_{(\cX, \cL)}\frac{-D^\NA(\cX, \cL)}{\norm{(\cX, \cL)}}.
\]
\end{thm}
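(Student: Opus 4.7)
\medskip\noindent\textbf{Proof proposal.}

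The plan is to exploit the $(S^1)^n$-symmetry to reduce the flow to a parabolic problem on the moment polytope $P$, and then use Yao's convex-analytic description of the optimal destabilizer to identify the limit. Via the Guillemin--Abreu--Donaldson correspondence, an $(S^1)^n$-invariant K\"ahler potential is encoded by a convex symplectic potential on $P$, and pushing forward along the moment map sends $\sigma=e^\rho$ to a nonnegative function on $P$ subject to $\frac{1}{V_P}\int_P\sigma\,dx=1$. In this language $R(\phi)=\frac{1}{V_P}\int_P(\sigma-1)^2\,dx$, and Yao's explicit formulas for $D^\NA$ and $\norm{\cdot}$ turn~\eqref{lower bound of Ricci-Calabi} into a concrete $L^2(P)$-variational inequality.

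Next I would record the basic monotonicity identities along~\eqref{Mabuchi flow}:
\begin{equation*}
\ddt E(\phi)=\frac{1}{V}\int_X(1-e^\rho)\omega_\phi^n=0,\qquad \ddt D(\phi)=-R(\phi)\leq 0,
\end{equation*}
so that $E$ is conserved while $D$ is nonincreasing. Combined with the long-time existence from Theorem~\ref{thm: mainFano} and the bound $R\geq I^2$ (with $I:=\sup_{(\cX,\cL)}(-D^\NA/\norm{\cdot})$) coming from~\eqref{lower bound of Ricci-Calabi}, these yield weak $L^2(P)$-compactness of $\{\sigma_t-1\}$ along subsequences.

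The heart of the argument is identifying the weak-$L^2$ limit. Yao shows that the infimum of $R$ is attained uniquely (up to additive constants): by the extremal affine function $e$ in the relatively Ding-semistable case, and by $d+e$ in the unstable case. I would verify that any accumulation point of $\sigma_t-1$ satisfies the Euler--Lagrange condition of Yao's variational problem, exploiting that the monotone decrease of $D$ together with the conservation of $E$ forces the flow to saturate~\eqref{lower bound of Ricci-Calabi} in the limit; by Yao's uniqueness this pins the limit down. The strict convexity of $R$ as a quadratic form in $\sigma-1$ then upgrades weak to strong $L^2$-convergence, and the final statement $R(\phi(t))^{1/2}\to I$ follows by substitution.

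The main obstacle will be the relatively unstable case, where $\omega_\phi$ itself degenerates (the level sets of $\sigma_t$ concentrate along the two-plane structure of $d$) and no smooth limit exists on $X$. The analysis must be performed entirely on $P$, and the affine-normalization ambiguity coming from $\Aut_{0}(X)$ must be fixed --- the role of the shift $e\mapsto d+e$ is precisely to pin down the unique $L^2(P)$ limit of $\sigma_t-1$.
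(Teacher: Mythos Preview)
Your reduction to $P$ and the basic monotonicity identities are correct, as is the observation that $\{\sigma_t-1\}$ is bounded in $L^2(P)$ (since $R$ is decreasing). The genuine gap is the sentence ``the monotone decrease of $D$ together with the conservation of $E$ forces the flow to saturate~\eqref{lower bound of Ricci-Calabi} in the limit.'' This is precisely the statement to be proved, and nothing you have written implies it. Monotonicity of $R(t)$ together with the lower bound $R(t)\geq I^2$ only gives that $\lim_{t\to\infty} R(t)$ exists and is $\geq I^2$; there is no mechanism in your outline that rules out $\lim R > I^2$. In the unstable case $D$ is \emph{not} bounded below, so its monotone decrease carries no information by itself, and the Euler--Lagrange argument you sketch would at best identify critical points of $R$, not minimizers.

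The paper's argument supplies exactly this missing mechanism, and it is not a compactness/variational identification of weak limits. One introduces the \emph{$(d+\ell)$-modified Ding functional} $D_{d+\ell}(u)=L(\phi)+\frac{1}{V_P}\int_P u(d+\ell)$ and computes along the flow
\[
\ddt D_{d+\ell}(u)=\frac{1}{V_P}\int_P(\sigma-1)(d+\ell-\sigma)
=-\frac{1}{V_P}\int_P(\sigma-(d+\ell))^2-\Big((d+\ell)(0)-\frac{1}{V_P}\int_P(d+\ell)\sigma\Big).
\]
Two facts about the flow-dependent function $\sigma$ are crucial: (i) $\sigma$ is a \emph{balancing function} ($\sigma>0$, $\int_P\sigma=V_P$, $\int_P\sigma x_i=0$), which yields the Jensen-type inequality $f(0)\leq\frac{1}{V_P}\int_P f\sigma$ for convex $f$; applying this to $f=d+\ell$ kills the sign of the last bracket and gives $\ddt D_{d+\ell}\leq -\frac{1}{V_P}\|\sigma-(d+\ell)\|_{L^2}^2$. (ii) A coercivity estimate (proved via Pr\'ekopa and a rescaling trick) shows $D_{d+\ell}(u_t)\geq -\frac{C}{K}t-C_K$ for every $K\geq 1$. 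If $\|\sigma_t-(d+\ell)\|_{L^2}$ stayed bounded away from zero, (i) would force $D_{d+\ell}(u_t)\leq -\delta t+C'$, contradicting (ii) for large $K$. This gives $\liminf\|\sigma_t-(d+\ell)\|_{L^2}=0$; full convergence then follows from the parallelogram law together with Yao's characterization of $d+\ell$ as the $L^2$-\emph{minimal} balancing function (the midpoint $\frac{1}{2}(\sigma+(d+\ell))$ is again balancing). Your ``strict convexity of $R$'' idea is in the right spirit for this last step, but the liminf step requires the modified-Ding monotonicity and the coercivity bound, neither of which appears in your proposal.
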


This theorem shows that, in the toric case, the $MA^{-1}$-flow provides a natural deformation of $X$ to a maximally destabilizing degeneration.  We note that in general the optimal destabilizer produced by the $MA^{-1}$-flow is different from the optimal destabilizer produced by the K\"ahler-Ricci flow.

The organization of this paper is as follows.  In section~\ref{sec: DMflow} we discuss some generalities, and collect some useful evolution equations and estimates along the inverse Monge-Amp\`ere flow.  In section~\ref{sec: DMflowCan} we prove the long-time existence and convergence of the flow on canonically polarized K\"ahler manifolds.  In section~\ref{sec: DMflowFano} we prove the long-time existence of the flow on Fano manifolds.  We also prove the weak convergence of the flow on K\"ahler-Einstein Fano manifold.  In section~\ref{sec: stable} we discuss some generalities concerning $K$-stability, Ding stability, relative Ding stability, and Mabuchi solitons.  Finally, in section~\ref{sec: toric} we analyze the flow on toric manifolds and prove Theorem~\ref{thm: toric thm}.  We conclude with some remarks concerning the general picture about the behavior the flow suggested by the toric case.

\begin{ackn}
The authors express their gratitude to Shigetoshi Bando, Robert Berman, S\'ebastien Boucksom, Ryoichi Kobayashi and Yuji Odaka for helpful conversations. T.C.C and T. H. would like to thank Chalmers University for their hospitality during a visit in May, 2017, where this project started.  T.C.C was supported in part by National Science Foundation grant DMS-1506652, the European Research Council and the Knut and Alice Wallenberg Foundation. T.H was supported by JSPS KAKENHI Grant Number 15H06262 and 17K14185. R.T was supported by Grant-in-Aid for JSPS Fellows Number 16J01211.
\end{ackn}

\section{The inverse Monge-Amp\`ere flow}\label{sec: DMflow}

Fix a polarized K\"ahler manifold $(X,L)$ with $\lambda L= -K_{X}$, $\lambda =\pm 1$.  As before we fix a K\"ahler metric $\omega_0 \in 2 \pi c_1(L)$, and consider the space of K\"ahler metrics $\cH$ defined in \eqref{eq: spaceOfMetrics}.   Recalling the definition of the Ricci potential, the $MA^{-1}$-flow can be written as
\[
\ddt \phi = 1-e^{\rho}
\]
where $\rho$ is the Ricci potential of $\phi$, normalized by $\int_{X}e^{\rho} \omega_{\phi}^n = \int_{X}\omega_0^n = V$.  Writing this out in terms of the potential $\phi$ we get
\begin{equation}\label{eq: DMflow2}
\ddt \phi = 1-\frac{\omega_0^n}{\omega_{\phi}^n}e^{-\lambda \phi + \rho_0 +c(t)}, \qquad c(t) = -\log \left(\frac{1}{V} \int_{X}e^{-\la \phi +\rho_0} \omega_0^n \right).
\end{equation}
The Ding functional and the Ricci-Calabi energy are respectively given by
\[
\begin{aligned}
D(\phi) &= -E(\phi) - \frac{1}{\lambda} \log \left( \int_{X} e^{-\lambda \phi +\rho_0} \omega_0^{n} \right)\\
& = -E(\phi) +\frac{1}{\lambda}c(t) -\frac{1}{\lambda}\log V,\\
R(\phi) &= \frac{1}{V} \int_{X}(1-e^{\rho})^{2} \omega_{\phi}^{n}.
\end{aligned}
\]
Along the flow we have
\[
\ddt E(\phi) = \frac{1}{V}\int_{X} \dot{\phi} \omega_{\phi}^{n} = 1 -\frac{1}{V} \int_{X} e^{\rho}\omega_{\phi}^n = 0
\]
and 
\[
\ddt D(\phi) = -\frac{1}{V} \int_{X} (1-e^{\rho})^{2}\omega_{\phi}^{n} = -R(\phi).
\]
Combining these basic observations we have
\begin{lem}\label{lem: BasicProps}
Along the inverse Monge-Amp\`ere flow we have
\begin{enumerate}
\item[(i)] The Aubin-Yau energy $E(\phi)$ is constant.
\item[(ii)] The Ding functional is monotonically decreasing.
\item[(iii)] The constant $c(t)$ defined in~\eqref{eq: DMflow2} satisfies
\[
c(t) = \lambda D(\varphi) + \lambda E(\varphi_0) + \log V.
\]
In particular, $c(t)$ is monotonically increasing when $\lambda =-1$, and monotonically decreasing when $\lambda =+1$.
\end{enumerate}
\end{lem}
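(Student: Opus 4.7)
\medskip

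The plan is that all three statements follow from routine variational calculations using the formulas already recorded in the excerpt, together with the normalization $\int_X e^\rho \omega_\phi^n = V$ of the Ricci potential.

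For (i), I would simply plug the flow equation $\dot\phi = 1 - e^\rho$ into the variational formula $\delta E(\phi) = \frac{1}{V}\int_X \delta\phi\, \omega_\phi^n$ for the Aubin--Yau energy. This gives
\begin{equation*}
\frac{d}{dt} E(\phi) = \frac{1}{V}\int_X (1 - e^\rho)\, \omega_\phi^n = 1 - \frac{1}{V}\int_X e^\rho\, \omega_\phi^n = 0,
\end{equation*}
where the last equality is precisely the normalization of $\rho$. Hence $E(\phi(t)) \equiv E(\phi_0)$.

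For (ii), I would apply the variational formula \eqref{Ding variation} for $D$ with $\delta\phi = \dot\phi = 1 - e^\rho = -(e^\rho - 1)$, obtaining
\begin{equation*}
\frac{d}{dt} D(\phi) = \frac{1}{V}\int_X (1 - e^\rho)(e^\rho - 1)\, \omega_\phi^n = -\frac{1}{V}\int_X (e^\rho - 1)^2\, \omega_\phi^n = -R(\phi) \le 0,
\end{equation*}
which proves the monotonicity and also identifies the dissipation rate as the Ricci--Calabi energy.

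For (iii), the content is purely algebraic: I would unwind the two closed-form expressions appearing in the text. From the definition of $c(t)$ in \eqref{eq: DMflow2}, $\log\!\int_X e^{-\lambda\phi + \rho_0}\omega_0^n = \log V - c(t)$, and substituting this into the explicit formula $D(\phi) = -E(\phi) - \frac{1}{\lambda}\log\!\int_X e^{-\lambda\phi + \rho_0}\omega_0^n$ yields
\begin{equation*}
c(t) = \lambda D(\phi) + \lambda E(\phi) + \log V,
\end{equation*}
and by part (i) we may replace $E(\phi)$ with $E(\phi_0)$. The sign of $\dot c(t) = \lambda \dot D(\phi) = -\lambda R(\phi)$ then follows from (ii), giving $\dot c \ge 0$ for $\lambda = -1$ and $\dot c \le 0$ for $\lambda = +1$.

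There is no genuine obstacle here; the only thing to be careful of is bookkeeping the normalization of $\rho$ (which is what makes $E$ constant) and the sign of $\lambda$ in the comparison between $c(t)$ and $D$. The lemma is essentially a sanity check that the flow is the gradient flow of $D$ within the level set $\{E = E(\phi_0)\}$.
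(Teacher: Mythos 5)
Your proposal is correct and follows exactly the same route as the paper: differentiate $E$ and $D$ along the flow using their variational formulas and the normalization $\int_X e^{\rho}\omega_\phi^n = V$, then read off (iii) from the closed-form expression of $D$ in terms of $c(t)$ together with the constancy of $E$. Nothing is missing.
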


Next, we compute the variation of the Ricci-Calabi energy $R(\phi)$.  We begin by computing the variation of the normalized Ricci potential.

\begin{lem}
The variation of the normalized Ricci potential $\rho = \rho_{\phi}$ is 
\[
\delta \rho = -\Delta_{\omega_{\phi}} \delta \phi - \lambda \left( \delta \phi - \frac{1}{V} \int_{X} \delta{\phi} e^{\rho} \omega_{\phi}^{n} \right).
\]
\end{lem}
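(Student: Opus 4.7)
The plan is to differentiate the explicit formula \eqref{eq: Ricci pot} for the Ricci potential and then fix the ambiguity in the constant using the normalization $\int_X e^{\rho_\phi} \omega_\phi^n = V$.

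First, I would take the variation of $\rho_\phi = -\lambda \phi - \log(\omega_\phi^n/\omega_0^n) + \rho_0 + c$ in the direction $\delta \phi$. Only the first two terms and the constant $c = c(\phi)$ depend on $\phi$, so this produces
\[
\delta \rho = -\lambda\, \delta\phi - \delta\log\!\left(\frac{\omega_\phi^n}{\omega_0^n}\right) + \delta c.
\]
Using the standard identity $\delta \omega_\phi^n = \Delta_{\omega_\phi}(\delta\phi)\, \omega_\phi^n$ (which comes from $\delta\omega_\phi = \dd\,\delta\phi$ and $n\,\dd f \wedge \omega^{n-1} = (\Delta_\omega f)\,\omega^n$), we get $\delta\log(\omega_\phi^n/\omega_0^n) = \Delta_{\omega_\phi}\delta\phi$, so
\[
\delta \rho = -\Delta_{\omega_\phi}\delta\phi - \lambda\,\delta\phi + \delta c.
\]

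Next I would determine $\delta c$ by differentiating the normalization $\int_X e^{\rho_\phi}\omega_\phi^n = V$. This gives
\[
0 = \int_X e^\rho\,\delta\rho\, \omega_\phi^n + \int_X e^\rho\, \Delta_{\omega_\phi}(\delta\phi)\, \omega_\phi^n.
\]
Substituting the formula for $\delta\rho$ just obtained and noting that the two $\Delta_{\omega_\phi}\delta\phi$ contributions cancel, I am left with
\[
0 = -\lambda \int_X \delta\phi\, e^\rho \omega_\phi^n + V\,\delta c,
\]
so $\delta c = \tfrac{\lambda}{V}\int_X \delta\phi\, e^\rho \omega_\phi^n$. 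Plugging this back into the expression for $\delta\rho$ yields the claimed formula.

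Essentially no step is a serious obstacle: the only subtlety is keeping track of the normalization constant, which is why I would introduce $\delta c$ as an unknown and solve for it from the volume constraint rather than try to differentiate $c$ directly from \eqref{eq: Ricci pot} (where $c$ is only implicitly defined).
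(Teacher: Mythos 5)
Your argument is correct and is essentially the paper's own proof: the paper likewise observes that $\delta\rho = -\Delta_{\omega_\phi}\delta\phi - \lambda\,\delta\phi + a$ for a constant $a$, and then fixes $a$ by differentiating the normalization $\int_X e^{\rho}\omega_\phi^n = V$, which gives $\int_X(\delta\rho + \Delta_{\omega_\phi}\delta\phi)e^{\rho}\omega_\phi^n = 0$ and hence the stated value of the constant. Your explicit cancellation of the two Laplacian terms is exactly what makes that integral identity work.
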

\begin{proof}
From the definition of $\rho$ we have
\[
\delta \rho = -\Delta_{\omega_\phi} \delta \phi - \lambda \delta \phi +a
\]
for some constant $a$.  On the other hand, since $\int_{X}e^{\rho} \omega_{\phi}^n = V$ we have
\[
\int_{X} (\delta \rho + \Delta_{\omega_{\phi}} \delta \phi) e^{\rho} \omega_{\phi}^n =0
\]
and so $a$ is determined by integration and the lemma is proved.
\end{proof}

With this computation we compute the variation of the Ricci-Calabi energy.

\begin{prop}\label{prop: RCvar}
The Ricci-Calabi energy satisfies
\[
\delta R(\phi) = \frac{-2}{V} \int_{X} \delta \phi \left( \Delta_{\omega_{\phi}}e^{\rho} + (\dbar e^{\rho} ,\dbar \rho)_{\omega_{\phi}} + \lambda (e^{\rho} - \frac{1}{V} \int_{X}e^{2\rho} \omega_{\phi}^n) \right) e^{\rho}\omega_{\phi}^{n}.
\]
\end{prop}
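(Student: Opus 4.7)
The plan is a direct computation, taking full advantage of the normalization $\int_X e^\rho \omega_\phi^n = V$. Expanding $(e^\rho - 1)^2 = e^{2\rho} - 2e^\rho + 1$ and integrating, the normalization causes the linear and constant terms to collapse, so that
\begin{equation*}
R(\phi) = \frac{1}{V}\int_X e^{2\rho}\omega_\phi^n - 1,
\end{equation*}
which is a much simpler object to differentiate.

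Taking the variation and using the standard identity $\delta(\omega_\phi^n) = (\Delta_{\omega_\phi}\delta\phi)\,\omega_\phi^n$, I get
\begin{equation*}
\delta R(\phi) = \frac{1}{V}\int_X \bigl(2 e^{2\rho}\delta\rho + e^{2\rho}\Delta_{\omega_\phi}\delta\phi\bigr)\omega_\phi^n.
\end{equation*}
Substituting the formula for $\delta\rho$ from the previous lemma, the two contributions involving $\Delta_{\omega_\phi}\delta\phi$ combine into a single term $-e^{2\rho}\Delta_{\omega_\phi}\delta\phi$, which integration by parts transfers onto $e^{2\rho}$. The $\lambda$-dependent pieces in $\delta\rho$ contribute a pointwise term $-2\lambda e^{2\rho}\delta\phi$ together with the non-local term
\begin{equation*}
\frac{2\lambda}{V^2}\Bigl(\int_X e^{2\rho}\omega_\phi^n\Bigr)\Bigl(\int_X \delta\phi\, e^\rho\omega_\phi^n\Bigr),
\end{equation*}
arising from the constant that enforces the normalization of $\rho$.

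The last step is essentially cosmetic: I rewrite the Laplacian factor using the chain-rule identity
\begin{equation*}
\tfrac{1}{2}\Delta_{\omega_\phi} e^{2\rho} = e^\rho\bigl(\Delta_{\omega_\phi} e^\rho + (\bar\partial e^\rho, \bar\partial\rho)_{\omega_\phi}\bigr),
\end{equation*}
and rewrite the non-local piece as $-\lambda \frac{1}{V}\int_X e^{2\rho}\omega_\phi^n$ inside the integrand, multiplied by the common factor $e^\rho\omega_\phi^n$. Collecting everything produces the stated expression. I do not anticipate any real obstacle; the only bookkeeping point worth highlighting is that the non-local correction is precisely the one that makes the full integrand orthogonal to constants in the $e^\rho\omega_\phi^n$-inner product, as it must be given the invariance of $R$ under $\phi \mapsto \phi + c$.
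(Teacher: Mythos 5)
Your proof is correct and follows essentially the same route as the paper's: both rest on the variation formula for $\rho$ from the preceding lemma, the identity $\delta(\omega_\phi^n)=(\Delta_{\omega_\phi}\delta\phi)\,\omega_\phi^n$, and a single integration by parts that transfers the Laplacian onto $e^{2\rho}$ via $\tfrac12\Delta_{\omega_\phi}e^{2\rho}=e^\rho(\Delta_{\omega_\phi}e^\rho+(\bar\partial e^\rho,\bar\partial\rho)_{\omega_\phi})$. Your preliminary simplification $R(\phi)=\frac{1}{V}\int_X e^{2\rho}\omega_\phi^n-1$ is a pleasant tidying of the bookkeeping (the paper reaches the same $e^{2\rho}-1$ combination one line into its computation) rather than a different argument.
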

\begin{proof}
With the above lemma it is straightforward to see 
\begin{align*}
\d R(\phi)
&= \frac{2}{V}\int_X (\d \rho) (e^\rho-1) e^\rho \omega^n 
+\frac{1}{V}\int_X (\Delta \d\phi) (e^\rho-1)^2 \omega^n \\
&= \frac{2}{V}\int_X \bigg( -\Delta (\d \phi) - \la \big(\d\phi - \frac{1}{V}\int_X (\d\phi)e^\rho \omega^n\big)\bigg) (e^\rho-1) e^\rho \omega^n 
+\frac{1}{V}\int_X (\Delta \d\phi) (e^\rho-1)^2 \omega^n \\
&= -\frac{1}{V}\int_X (\Delta \d\phi)(e^{2\rho}-1)\omega^n
-\frac{2}{V}\int_X \la\big(\d\phi - \frac{1}{V}\int_X (\d\phi)e^\rho\omega^n \big)(e^\rho-1)e^\rho \omega^n. 
\end{align*}
Integration by parts yields 
\begin{equation*}
\int_X (\Delta \d\phi)(e^{2\rho}-1)\omega^n 
=2\int_X (\d\phi)(\Delta e^\rho + (\bp e^\rho, \bp \rho)_\omega) e^\rho\omega^n,  
\end{equation*}
and we obtain the formula. 
\end{proof}
The variation of the Ricci-Calabi energy will play an important role in what is to follow, and so we spend a moment to expand on the above formula.  Introduce the twisted Laplacian on functions $f \in C^{\infty}(X,\mathbb{C})$ defined by
\[
L_{\rho}f = e^{-\rho} \nabla_{i} \left(g^{i\bar{j}}e^{\rho} \nabla_{\bar{j}} f\right) = \Delta_{g} f + g^{i\bar{j}}\del_{\bar{j}} f \del_i\rho = \Delta_g f + (\dbar \rho, \dbar f)_{\omega}.
\]
This operator has played an important role in the study of the K\"ahler-Ricci flow (see e.g. \cite{PS06a, PSSW09, Zh11}), and it plays a similarily important role in determining the behavior of the Ding functional along the $MA^{-1}$-flow.  Let us introduce the measure $d\mu = e^{\rho} \omega^n$, and consider the Hilbert space $L^{2}(X,d\mu)$.  Then the variation of the Ricci-Calabi energy can be written succinctly as
\[
\delta R(\phi) = -\frac{2}{V} \int_{X} \delta \phi \left( L_{\rho}\tilde{f} + \lambda \tilde{f} \right) d\mu
\]
if we take
\[
\tilde{f} = (e^{\rho}-1)  - \frac{1}{V}\int_{X}(e^{\rho}-1) d\mu 
\]
which is the orthogonal projection in $L^{2}(d\mu)$ of $e^{\rho}-1$ to the complement of the kernel of $L_{\rho}$.  We have the following

\begin{prop}\label{prop: Lrho}
The following properties hold:
\begin{enumerate}
\item[(i)] $L_{\rho}$ is self-adjoint with respect to the $L^2$ inner-product induced by $d\mu$, and the kernel of $L_{\rho}$ consists of the constants.
\item[(ii)] Let $\nu_1$ be the first non-zero eigenvalue of $L_{\rho}$.  Then $\nu_1$ is negative, and $\nu_1 \leq -\lambda$.
\end{enumerate}
\end{prop}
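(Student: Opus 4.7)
For part (i), I would use the divergence form $L_\rho f = e^{-\rho}\nabla_i(g^{i\bar j} e^\rho \nabla_{\bar j}f)$ from the definition. Pairing with a test function $g$ against the measure $d\mu = e^\rho\omega^n$ and integrating by parts, the exponential weights cancel and produce the symmetric bilinear form
\[
\int_X (L_\rho f)\, g\, d\mu \;=\; -\int_X (\bp f,\bp g)_\omega\, d\mu,
\]
from which self-adjointness is immediate. Setting $g = f$, any $f\in\ker L_\rho$ must satisfy $\bp f = 0$, hence is holomorphic on the compact K\"ahler manifold $X$, and therefore constant.

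For part (ii), the same identity applied to an eigenfunction $L_\rho f = \nu f$ gives
\[
\nu\int_X f^2\,d\mu \;=\; -\int_X |\bp f|^2_\omega\, d\mu,
\]
which is strictly negative whenever $f$ is non-constant. In particular every nonzero eigenvalue of $L_\rho$ is strictly negative, and the bound $\nu_1 \leq -\lambda$ is automatic when $\lambda \leq 0$. The nontrivial case is $\lambda = +1$, which I would handle via a weighted Bochner--Weitzenb\"ock identity: for every real $f\in C^\infty(X)$,
\[
\int_X (L_\rho f)^2\, d\mu \;=\; \int_X |\nabla \bp f|^2_\omega\, d\mu \;+\; \int_X \bigl(\Ric(\omega)-\dd\rho\bigr)(\bp f,\p f)\, d\mu,
\]
where $|\nabla \bp f|^2_\omega\geq 0$ denotes the full covariant Hessian norm of $\bp f$. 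The defining equation of $\rho$ gives $\Ric(\omega)-\dd\rho = \lambda\omega$, so the curvature term reduces to $\lambda\int_X |\bp f|^2_\omega\, d\mu$. Dropping the nonnegative Hessian contribution and substituting an eigenfunction yields $\nu^2 \geq -\lambda\nu$, which combined with $\nu<0$ is equivalent to $\nu \leq -\lambda$.

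The only non-routine step is verifying the weighted Bochner identity. It is a direct extension of the classical K\"ahler identity
\[
\int_X (\Delta f)^2\,\omega^n \;=\; \int_X |\nabla\bp f|^2_\omega\,\omega^n + \int_X \Ric(\omega)(\bp f,\p f)\,\omega^n,
\]
proved by integrating by parts twice and commuting covariant derivatives via the K\"ahler Ricci identity. The weight $e^\rho$ produces an extra contribution from the cross term $(\bp\rho,\bp f)_\omega$ in $L_\rho f$ which, after simplification, combines with the usual Ricci term to give precisely the twisted curvature $\Ric(\omega)-\dd\rho$. Once this identity is in hand, the proposition is immediate.
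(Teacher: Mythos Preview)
Your argument is correct and is essentially the same weighted Bochner computation the paper carries out: commute a derivative through $L_\rho$, use $\Ric(\omega)-\dd\rho=\lambda\omega$ to replace the curvature term by $\lambda$, and integrate against $d\mu$. The paper packages this slightly differently---it applies $\nabla_{\bar\ell}$ directly to the eigenvalue equation $L_\rho f=\nu_1 f$, pairs with $g^{m\bar\ell}\nabla_m f\,d\mu$, and reads off
\[
(\nu_1+\lambda)\int_X|\bar\nabla f|^2\,d\mu=-\int_X|\bar\nabla\bar\nabla f|^2\,d\mu\le 0,
\]
rather than first stating a Bochner identity for arbitrary $f$ and then specializing. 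The content is the same.

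One imprecision worth fixing: in your Bochner identity the Hessian term should be the $(0,2)$-part $|\nabla_{\bar i}\nabla_{\bar j}f|^2$, not the ``full covariant Hessian norm of $\bp f$''. Already in the unweighted case one has $\int_X(\Delta f)^2\,\omega^n=\int_X|\nabla_i\nabla_{\bar j}f|^2\,\omega^n$ with no curvature term, so the identity you wrote down---with the full $\nabla(\bp f)$ norm plus a Ricci term---cannot hold. The version that does hold, and that the paper uses, carries only $|\nabla_{\bar i}\nabla_{\bar j}f|^2$. Since you only use nonnegativity of that term, your conclusion is unaffected.
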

\begin{proof}
The first point is easy.  For $(\mathrm{ii})$, if $\nu_1$ is the first non-zero eigenvalue, then applying $\nabla_{\bar {\ell}}$ to the equation $L_{\rho} f= \nu_1 f$ and commuting derivatives we get
\[
\begin{aligned}
\nu_1 \nabla_{\bar{\ell}} f&= g^{i\bar{j}} \nabla_i \nabla_{\bar j} \nabla_{\bar{\ell}} f -R_{\bar{\ell}}^{\bar{p}}\nabla_{\bar{p}}f + g^{i\bar{j}}\nabla_{\bar{j}} f \nabla_{\bar{\ell}}\nabla_i\rho + g^{i\bar{j}}\nabla_{\bar{\ell}}\nabla_{\bar{j}} f \nabla_i\rho\\
&= g^{i\bar{j}} \nabla_i \nabla_{\bar{j}} \nabla_{\bar{\ell}} f - \lambda \nabla_{\bar{\ell}} f +g^{i\bar{j}}\nabla_{\bar{\ell}}\nabla_{\bar{j}} f \nabla_i\rho\\
&= e^{-\rho}\nabla_{i}\left( e^{\rho} \nabla_{\bar{j}}\nabla_{\bar{\ell}} f \right) - \lambda \nabla_{\bar{\ell}} f.
\end{aligned}
\]
Multiplying by $g^{m \bar{\ell}}\nabla_m f d\mu$ and integrating by parts gives
\[
-\int_{X} |\bar{\nabla}\bar{\nabla}f|^{2} d\mu - \lambda \int_{X} |\bar{\nabla} f|^{2} d\mu = \nu_{1}\int_{X} |\bar{\nabla} f|^{2}d\mu
\]
and the proposition follows.
\end{proof}

An important application is
\begin{cor}\label{cor: DconvFlow}
Along the inverse Monge-Amp\`ere flow we have
\[
\frac{d^2}{dt^2} D(\phi) \geq 0, \qquad \ddt R(\phi) \leq 0.
\]
\end{cor}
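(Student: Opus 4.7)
The two inequalities are equivalent: we already observed $\ddt D(\phi)=-R(\phi)$, so $\tfrac{d^2}{dt^2}D(\phi)=-\ddt R(\phi)$, and both statements reduce to $\ddt R(\phi)\leq 0$. The plan is simply to substitute the flow equation $\dot\phi=1-e^\rho$ into the variational formula of Proposition~\ref{prop: RCvar}, re-expressed in the compact form
\[
\d R(\phi)=-\frac{2}{V}\int_X \d\phi\,(L_\rho \tilde f+\lambda \tilde f)\,d\mu,\qquad \tilde f:=(e^\rho-1)-\tfrac{1}{V}\!\int_X(e^\rho-1)\,d\mu,
\]
and then exploit the spectral properties of $L_\rho$ from Proposition~\ref{prop: Lrho}.

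First, substituting $\d\phi=\dot\phi=-(e^\rho-1)=-(\tilde f+c)$, where $c$ is the constant $\tfrac{1}{V}\int_X(e^\rho-1)\,d\mu$, gives
\[
\ddt R(\phi)=\frac{2}{V}\int_X (\tilde f+c)(L_\rho \tilde f+\lambda \tilde f)\,d\mu.
\]
Next I would argue that the contributions involving $c$ drop out: $L_\rho$ annihilates constants and is self-adjoint on $L^2(d\mu)$ by Proposition~\ref{prop: Lrho}(i), while $\tilde f$ is orthogonal to constants in $L^2(d\mu)$ by construction. Thus only the $\tilde f$-terms survive, and integration by parts on the first term (just as in the proof of Proposition~\ref{prop: Lrho}) yields
\[
\ddt R(\phi)=\frac{2}{V}\Bigl(-\!\int_X |\bp\tilde f|^2_\omega\,d\mu+\lambda\!\int_X \tilde f^{\,2}\,d\mu\Bigr).
\]

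Finally I would apply the spectral gap estimate. Since $\tilde f$ lies in the orthogonal complement of $\ker L_\rho$, the Rayleigh-quotient inequality gives
\[
\int_X |\bp \tilde f|^2_\omega\,d\mu=-\!\int_X \tilde f\,L_\rho\tilde f\,d\mu\;\geq\;-\nu_1\!\int_X \tilde f^{\,2}\,d\mu\;\geq\;\lambda\!\int_X \tilde f^{\,2}\,d\mu,
\]
where the last inequality uses $\nu_1\leq -\lambda$ from Proposition~\ref{prop: Lrho}(ii). (For $\lambda=-1$ the final inequality is trivial since the left-hand side is non-negative; the spectral gap is only needed for the Fano case $\lambda=+1$.) Plugging this back gives $\ddt R(\phi)\leq 0$, hence $\tfrac{d^2}{dt^2}D(\phi)\geq 0$.

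There is no genuine obstacle here: all the analytic content has already been packaged into Propositions~\ref{prop: RCvar} and \ref{prop: Lrho}. The only delicate point is the bookkeeping with the constant mode: one must project $e^\rho-1$ off the kernel of $L_\rho$ before the spectral bound can be applied, and it is this projection that justifies the appearance of $\tilde f$ rather than $e^\rho-1$ itself in the final estimate.
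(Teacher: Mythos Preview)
Your proof is correct and follows essentially the same approach as the paper: substitute the flow equation into the variational formula for $R$, observe that the constant mode drops out so only $\tilde f$ survives, and then apply the spectral bound $\nu_1\le -\lambda$ from Proposition~\ref{prop: Lrho}. The paper presents the identical argument from the $D$ side (writing $\tfrac{d^2}{dt^2}D(\phi)=-\tfrac{2}{V}\int_X(L_\rho\tilde f+\lambda\tilde f)\tilde f\,d\mu$ directly and applying the eigenvalue inequality), leaving the constant-mode cancellation implicit; your version just makes that step explicit.
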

\begin{proof}
Since $\ddt D(\phi) = -R(\phi)$, the two statements are equivalent.  Recall that we have shown that, along the $MA^{-1}$-flow we have
\[
\frac{d^2}{dt^2} D(\phi) = -\frac{2}{V} \int_{X} \left(L_{\rho} \tilde{f} +\lambda \tilde{f}\right) \tilde{f} e^{\rho} \omega_{\phi}^n
\]
where $f= e^{\rho}-1$ and
\[
\tilde{f} = f- \frac{1}{V} \int_{X} fe^{\rho}\omega^n.
\]
Let $\nu_1$ be the first non-zero eigenvalue of $L_{\rho}$.  By Proposition~\ref{prop: Lrho} (ii) we have that $\nu_1 \leq -\lambda$.  By its definition $\tilde{f}$ is orthogonal the constants in $L^{2}(X,d\mu)$, and so elliptic theory implies 
\[
-\int_{X} L_{\rho}\tilde{f} \cdot \tilde{f} d\mu \geq -\nu_1\int_{X} |\tilde{f}|^2 d\mu.
\]
Thus we have
\begin{equation}\label{eq: DingConvex}
\frac{V}{2}\frac{d^2}{dt^2} D(\phi) \geq -(\lambda+\nu_1)\int_{X} |\tilde{f}|^2 d\mu \geq 0.
\end{equation}
\end{proof}

We remark that the previous result follows formally from the description of the Ding functional as a Kempf-Ness functional.  Finally, we prove a general $C^{2}$ estimate along the flow, following the original computation of Yau \cite{Y} for the complex Monge-Amp\`ere equation, and its analog for the K\"ahler-Ricci flow \cite{Cao}.

\begin{prop}\label{prop: C2est}
There exists uniform constants $A, C$ depending only on a lower bound for the section curvature of $\omega_0$, and the dimension of $X$, so that on $X\times[0,T)$ we have
\[
\log \Tr_{\omega_0} \omega_{\phi} \leq A+C\left(\phi-\inf_{X\times [0,T)}\phi\right)  - \inf_{X\times [0,T)}\left(\lambda \phi + \rho + c(t)\right).
\]
In particular, $|\del\dbar \phi|_{\omega_0}$ is bounded uniformly on $X\times [0,T)$ in terms of $\sup_{[0,T)}\|\phi \|_{L^{\infty}}$ and $\inf_{X\times[0,T)} \rho$.
\end{prop}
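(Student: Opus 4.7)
I would follow Yau's maximum-principle strategy \cite{Y}, adapted to the parabolic setting as in Cao \cite{Cao}. Fix $T' \in (0,T)$ and consider on $X\times[0,T']$ the test function
\[
H(x,t) := \log \Tr_{\omega_0}\omega_\phi - C\bigl(\phi - \inf_{X\times[0,T']}\phi\bigr),
\]
where $C>0$ is to be chosen large, depending only on the dimension and a lower bound for the bisectional curvature of $\omega_0$. The plan is to control $H$ from above at its spacetime maximum by $A - \inf_{X\times[0,T']}(\lambda\phi+\rho+c(t))$, and then let $T'\uparrow T$.

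At a spacetime maximum $(x_0,t_0)$ of $H$, either $t_0=0$ and the bound is inherited from the initial data $\omega_{\phi_0}$, or $(\partial_t-\Delta_{\omega_\phi})H \ge 0$ at $(x_0,t_0)$. I would combine three standard ingredients: Yau's Laplacian inequality
\[
\Delta_{\omega_\phi}\log\Tr_{\omega_0}\omega_\phi \ge -B\,\Tr_{\omega_\phi}\omega_0 + \frac{\Delta_{\omega_0}F}{\Tr_{\omega_0}\omega_\phi},
\]
with $F := \log(\omega_\phi^n/\omega_0^n) = -\lambda\phi + \rho_0 + c(t) - \rho$ and $B$ depending on a lower bound for the bisectional curvature of $\omega_0$; the parabolic identity $\partial_t\log\Tr_{\omega_0}\omega_\phi = \Delta_{\omega_0}\dot\phi/\Tr_{\omega_0}\omega_\phi$; and the flow relation $\dot\phi = 1-e^\rho$ together with $(\partial_t-\Delta_{\omega_\phi})\phi = 1 - e^\rho - n + \Tr_{\omega_\phi}\omega_0$. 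Choosing $C > B$ absorbs the bad $\Tr_{\omega_\phi}\omega_0$ contribution, leaving at $(x_0,t_0)$ a pointwise bound on $\Tr_{\omega_\phi}\omega_0$ modulo the error $\Delta_{\omega_0}(\dot\phi - F)/\Tr_{\omega_0}\omega_\phi$. The arithmetic-geometric inequality on eigenvalues,
\[
(n-1)!\,\Tr_{\omega_0}\omega_\phi \le \bigl(\Tr_{\omega_\phi}\omega_0\bigr)^{n-1}\cdot\frac{\omega_\phi^n}{\omega_0^n},
\]
then converts this into a bound $\log\Tr_{\omega_0}\omega_\phi(x_0,t_0) \le A_0 + F(x_0,t_0)$. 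Substituting $F(x_0,t_0) = -(\lambda\phi+\rho+c(t))(x_0,t_0) + \rho_0(x_0) + 2c(t_0)$ and using $(\lambda\phi+\rho+c(t))(x_0,t_0) \ge \inf(\lambda\phi+\rho+c(t))$, together with the monotonicity of $c(t)$ from Lemma~\ref{lem: BasicProps}(iii) and $\sup\rho_0 < \infty$, yields $H(x_0,t_0) \le A - \inf(\lambda\phi+\rho+c(t))$; the maximum property propagates this to every $(x,t) \in X\times[0,T']$.

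The main technical obstacle I anticipate is the handling of $\Delta_{\omega_0}(\dot\phi - F)$: expanding via $\dot\phi = 1-e^\rho$ and $F = -\lambda\phi + \rho_0 + c(t) - \rho$ produces $\Delta_{\omega_0}e^\rho$ and $\Delta_{\omega_0}\rho$ terms which are not controlled pointwise. These should be absorbed either by enlarging $C$ suitably, or by exploiting $\nabla\log\Tr_{\omega_0}\omega_\phi = C\nabla\phi$ at the critical point, or, if necessary, by passing to a modified test function that includes an auxiliary term in $\rho$. The final assertion on $|\del\dbar\phi|_{\omega_0}$ follows from $|\del\dbar\phi|_{\omega_0} \lesssim \Tr_{\omega_0}\omega_\phi + 1$ together with the observation that $c(t)$ is controlled whenever $\phi \in L^\infty$, via the explicit formula $c(t) = -\log\bigl(V^{-1}\int e^{-\lambda\phi+\rho_0}\omega_0^n\bigr)$.
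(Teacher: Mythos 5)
Your overall strategy is the same as the paper's (maximum principle applied to $\log \Tr_{\omega_0}\omega_{\phi} - C\phi$, Yau's Laplacian inequality, absorption of $\Tr_{\omega_{\phi}}\omega_0$ by taking $C$ larger than the curvature bound, and the arithmetic--geometric mean step at the end). However, the step you flag as the ``main technical obstacle'' is precisely the heart of the proof, and none of the three workarounds you propose resolves it. The correct resolution is an exact cancellation that you have not identified: write
\[
\partial_t \Tr_{\omega_0}\omega_{\phi} = \Delta_{\omega_0}\dot\phi = -\Delta_{\omega_0}e^{\rho} = -e^{\rho}\left(\Delta_{\omega_0}\rho + |\nabla\rho|^2_{\omega_0}\right),
\]
and compute $\Delta_{\omega_0}\rho$ not from the Monge--Amp\`ere formula~\eqref{eq: Ricci pot} but from the \emph{defining} equation $\dd\rho = \Ric(\omega_{\phi}) - \lambda\omega_{\phi}$, which gives $\Delta_{\omega_0}\rho = \hat{g}^{j\bar{k}}R_{j\bar{k}}(\omega_{\phi}) - \lambda\Tr_{\omega_0}\omega_{\phi}$. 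The term $-|\nabla\rho|^2_{\omega_0}$ comes with a favorable sign and can be discarded, while the Ricci term $\hat{g}^{j\bar{k}}R_{j\bar{k}}(\omega_{\phi})/\Tr_{\omega_0}\omega_{\phi}$ cancels \emph{exactly} against the corresponding term in Yau's inequality, provided you use the correctly weighted linearized operator $\partial_t - e^{\rho}\Delta_{\omega_{\phi}}$ (the factor $e^{\rho}$ is forced because the time derivative of the trace carries a factor $e^{\rho}$). What survives is only $e^{\rho}(\lambda + B\Tr_{\omega_{\phi}}\omega_0)$, which your choice of large $C$ then handles.

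By contrast: enlarging $C$ cannot absorb $\Delta_{\omega_0}\rho$, since that quantity is not controlled by $\Tr_{\omega_{\phi}}\omega_0$ or by anything bounded; the critical-point identity $\nabla\log\Tr_{\omega_0}\omega_{\phi} = C\nabla\phi$ constrains first derivatives of the trace, not second derivatives of $\rho$; and adding an auxiliary $\rho$-term to the test function would introduce new uncontrolled evolution terms rather than produce the cancellation. So as written your argument has a genuine gap at its central step, even though the scaffolding around it (the test function, the choice of $C$, the inequality $\Tr_{\omega_0}\omega_{\phi} \leq (\Tr_{\omega_{\phi}}\omega_0)^{n-1}\det g/\det\hat{g}$, and the substitution of~\eqref{eq: Ricci pot} at the end) matches the paper. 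A minor secondary point: in the final substitution you invoke ``monotonicity of $c(t)$'' to discard $2c(t_0)$, but $c(t)$ is \emph{increasing} when $\lambda = -1$, so monotonicity alone does not give an upper bound there; this term needs to be tracked more carefully (or folded into the $\inf(\lambda\phi + \rho + c(t))$ term).
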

\begin{proof}
The proof is by the maximum principle.  We begin by computing the evolution of $\Tr_{\omega_0} \omega$.  To ease notation, let us denote by $\hat{g}$ the K\"ahler metric associated  with $\omega_0$, and $g$ the K\"ahler metric associated with $\omega_{\phi}$.  The key computation is 
\[
\Delta_{g} \log \Tr_{\hat{g}} g \geq -B \Tr_{g}\hat{g} - \frac{\hat{g}^{j\bar{k}}R_{j\bar{k}}}{\Tr_{\hat{g}}g}
\]
where $B$ is a lower bound for the sectional curvature of the reference metric $\hat{g}$ (see, for example \cite[Lemma 3.7]{Sze14}). We compute
\[
\ddt \Tr_{\hat g} g = \Delta_{\hat{g}} \dot{\phi} = -\Delta_{\hat{g}} e^{\rho} = -e^{\rho} \left(\Delta_{\hat{g}} \rho + |\nabla \rho|^{2}_{\hat{g}}\right).
\]
From the definition of $\rho$ we obtain
\[
\ddt \Tr_{\hat{g}} g = e^{\rho} \left(-g^{j\bar{k}}R_{j\bar{k}} + \lambda \Tr_{\hat{g}}g - |\nabla \rho|^{2}_{\hat{g}} \right).
\]
We now apply the linearized operator and find
\[
\left(\ddt - e^{\rho} \Delta_{g}\right) \log \Tr_{\hat{g}} g \leq e^{\rho} \left( B \Tr_{g}\hat{g} + \lambda -\frac{ |\nabla \rho|^{2}}{\Tr_{\hat{g}}g} \right) \leq  e^{\rho} \left( B \Tr_{g}\hat{g} + 1\right).
\]
We now apply the maximum principle to the quantity $Q:= \log \Tr_{\hat{g}}g - C\phi$ for a large constant $C$.  We compute
\[
\left(\ddt - e^{\rho} \Delta_{g}\right) Q \leq e^{\rho} \left( B \Tr_{g}\hat{g} + 1\right) -C(1-e^{\rho}) + Ce^{\rho}(n- \Tr_{g}\hat{g}).
\]
Take $C=B+1 \geq 1$, so that
\[
\left(\ddt - e^{\rho} \Delta_{g}\right) Q \leq e^{\rho} \left( - \Tr_{g}\hat{g} + C(n+2)\right) -C \leq  e^{\rho} \left( - \Tr_{g}\hat{g} + C(n+2)\right).
\]
If $Q$ achieves its maximum at $(x^{*},t^{*}) \in X \times (0,T]$, then we have
\[
\Tr_{g}\hat{g} (x^{*},t^{*}) \leq C(n+2).
\]
On the other hand, we have
\[
\Tr_{\hat{g}}g \leq (\Tr_{g}\hat{g})^{n-1} \frac{\det g}{\det \hat{g}} = (\Tr_{g}\hat{g})^{n-1} e^{-(\rho +\lambda \phi +\rho_0 +c(t))}.
\]
Since $Q \leq Q(x^{*},t^*)$, some simple algebra yields the estimate.
\end{proof}

We end by recording the parabolic Evans-Krylov estimate, which allows us to obtain the higher order regularity of the flow.

\begin{prop}\label{prop: Krylov}
Let $\phi(t)$ evolve by the $MA^{-1}$-flow.  Suppose there exists a constant $A$ so that $A^{-1} \omega_0 \leq \omega_{\phi} \leq A \omega_0$ along the flow.  Then, for each $0<\alpha <1$, there exists a constant $C$, depending only on $\alpha, (M,\omega_0)$, $\sup_{X\times[0,T)} |\phi|$ and $A$ so that
\[
\| \phi(t) \|_{C^{2, \alpha}(X,\omega_0)} \leq C
\]
for all $t\in [0,T)$.
\end{prop}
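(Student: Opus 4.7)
The plan is to observe that, under the given metric bounds, the $MA^{-1}$-flow is a uniformly parabolic fully nonlinear equation whose nonlinearity is concave in the Hessian, so that the parabolic Evans--Krylov theorem applies to yield the $C^{2,\alpha}$ estimate.

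Writing $\omega_\phi = \omega_0 + \dd \phi$ and letting $g_0$ denote the K\"ahler metric of $\omega_0$, in a local holomorphic frame the flow \eqref{eq: DMflow2} reads
\[
\dot\phi = F(D^2\phi, x, t, \phi), \qquad F(M, x, t, u) := 1 - \frac{\det g_{0}(x)}{\det M}\, e^{-\lambda u + \rho_0(x) + c(t)},
\]
where $M = (g_{0, i\bar j} + \phi_{i\bar j})$ is positive definite Hermitian. I would first verify concavity of $F$ in $M$: since $M \mapsto \log\det M$ is concave on the cone of positive-definite Hermitian matrices, $M \mapsto -\log\det M$ is convex, and hence $M \mapsto 1/\det M = \exp(-\log\det M)$ is convex (composition of an increasing convex function with a convex function). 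Multiplying by the positive prefactor $\det g_0(x)\, e^{-\lambda u + \rho_0(x) + c(t)}$ and negating preserves concavity, so $F$ is concave in $M$.

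Next, I would verify uniform parabolicity under the hypotheses. The assumption $A^{-1}\omega_0 \leq \omega_\phi \leq A\omega_0$ pins the eigenvalues of $M$ into a compact subinterval of $(0, \infty)$ depending only on $A$ and $\omega_0$. The derivative $\partial F / \partial M_{ij} = \det g_0(x)\, e^{-\lambda u + \rho_0(x) + c(t)} \cdot (M^{-1})^{ji} / \det M$ therefore has eigenvalues bounded between two positive constants, i.e.\ the equation is uniformly parabolic in the sense required. For this one needs a bound on $c(t)$, which follows from Lemma \ref{lem: BasicProps}(iii) combined with the constancy of $E(\phi) = E(\phi_0)$ along the flow and the elementary bound on $D(\phi)$ in terms of $\sup_{X \times [0, T)}|\phi|$ coming from its explicit formula. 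All lower-order terms and coefficients are thereby uniformly controlled, and the dependence on $(x, t)$ is smooth.

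With concavity, uniform parabolicity, and smoothly bounded lower-order structure in hand, the parabolic Evans--Krylov theorem (see e.g.\ Lieberman, \emph{Second Order Parabolic Differential Equations}, Ch.\ XIV, or the classical work of Krylov) applied on small space-time cylinders in finitely many coordinate charts covering $X$ yields the stated $C^{2,\alpha}$ bound, with constants depending only on $\alpha$, $(X, \omega_0)$, $A$, and $\sup_{X \times [0, T)}|\phi|$. The only conceptual point in the argument is the concavity observation for $-1/\det M$; once that is noted there is no genuine obstacle, as the estimate then reduces to a direct citation of standard fully nonlinear parabolic theory.
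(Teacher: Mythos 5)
Your approach is the same as the paper's: identify the flow as a concave, uniformly parabolic fully nonlinear equation and invoke the parabolic Evans--Krylov theorem. The concavity verification (convexity of $M\mapsto 1/\det M$ as $\exp(-\log\det M)$), the computation of the linearized operator, and the control of $c(t)$ in terms of $\sup|\phi|$ are all correct and are exactly the ingredients the paper uses.

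There is, however, one gap, and it is precisely the point the paper flags as the ``subtlety'' of this proposition. The classical parabolic Evans--Krylov theorem (Krylov \cite{Kry82}, Lieberman Ch.~XIV) is stated for operators $F(D^2\phi,\dots)$ that are uniformly parabolic with respect to the \emph{full real Hessian}. Here $F$ depends only on the complex Hessian $\phi_{i\bar j}$; as a function of the real Hessian it is degenerate in the $(2,0)$ and $(0,2)$ directions (the derivative of $F$ with respect to those entries vanishes identically), so the hypotheses of the real-variable theorem fail and your claim that the estimate ``reduces to a direct citation of standard fully nonlinear parabolic theory'' is not accurate as stated. What is true --- and what the paper invokes --- is a complex version of the Evans--Krylov estimate, obtained via a now-standard trick going back to Y.~Wang \cite{W} (see also \cite{TWWY, CJY}), which exploits the concavity together with second differences along complex directions and a weak Harnack inequality to recover interior $C^{2,\alpha}$ control from bounds on the complex Hessian alone. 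Your argument is correct once this replacement is made, and the conclusion is unaffected; but as written the final citation does not cover the equation at hand.
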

\begin{proof}
The key point is that the $MA^{-1}$-flow is of the form
\[
\ddt \phi = F(\phi, \phi_{i\bar{j}})
\]
where $F(x,M)$ is concave in $M$, as a map from the positive definite Hermitian matrices to to $\mathbb{R}$.  Furthermore, the assumptions that $|\phi|$ is bounded and $A^{-1} \omega_0 \leq \omega_{\phi} \leq A \omega_0$ imply that the flow is uniformly parabolic.  With these observations, the proposition is nothing but the parabolic Evans-Krylov estimate \cite{Kry82}, with the subtlety that we only control the complex Hessian of $\phi$, rather than the full Hessian.  However, there is a now standard trick, going back to Wang \cite{W}, which allows us to apply the estimate of Krylov directly.  See, for example, \cite{TWWY, CJY}.
\end{proof}

Combining this proposition with the Schauder theory and a standard bootstrapping argument we obtain

\begin{cor}\label{cor: highEst}
Let $\phi(t)$ evolve by the inverse Monge-Amp\`ere flow.  Suppose there exists a constant $A$ so that $A^{-1} \omega_0 \leq \omega_{\phi} \leq A \omega_0$ along the flow.  Then, for each $\ell \in \mathbb{N}$, and $0<\alpha <1$, there exists a constant $C_{\ell}$, depending only on $\ell, \alpha, (M,\omega_0)$, $\sup_{X\times[0,T)} |\phi|$ and $A$ so that
\[
\| \phi(t) \|_{C^{\ell, \alpha}(X,\omega_0)} \leq C_{\ell}
\]
for all $t\in [0,T)$.
\end{cor}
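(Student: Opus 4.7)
The plan is to bootstrap from the $C^{2,\alpha}$ estimate of Proposition~\ref{prop: Krylov} using parabolic Schauder theory applied to spatial derivatives of the flow.  First, under the hypotheses, the quantity $e^\rho = 1 - \ddt\vp$ is uniformly bounded above and below away from zero on $X \times [0,T)$: this follows from the identity
\begin{equation*}
e^\rho = \frac{\omega_0^n}{\omega_\vp^n}\, e^{-\lambda \vp + \rho_0 + c(t)}
\end{equation*}
combined with $A^{-1}\omega_0 \leq \omega_\vp \leq A \omega_0$, the $C^0$ bound on $\vp$, and the resulting uniform bound on $c(t)$ from its defining integral formula in~\eqref{eq: DMflow2}.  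Consequently the flow can be recast as
\begin{equation*}
\log\det(\hag_{i\bar j} + \vp_{i\bar j}) - \log\det\hag_{i\bar j} + \log(1 - \ddt\vp) + \lambda \vp - \rho_0 - c(t) = 0,
\end{equation*}
and by Proposition~\ref{prop: Krylov} all entries are a priori $C^\alpha$ in the parabolic sense.

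Next, I would differentiate this equation in an arbitrary spatial coordinate direction $x_k$ and set $u := \partial_k \vp$, obtaining a linear parabolic equation for $u$ of the form
\begin{equation*}
g^{i\bar j}\, u_{i\bar j} - \frac{1}{1 - \ddt \vp}\, \ddt u + \lambda\, u = F_k,
\end{equation*}
where the principal symbol $g^{i\bar j}$ is uniformly elliptic and $C^\alpha$ by hypothesis, the coefficient $(1 - \ddt\vp)^{-1}$ is $C^\alpha$ and bounded by the first paragraph, and $F_k = \partial_k \rho_0 + (\hat g^{i\bar j} - g^{i\bar j})\,\partial_k \hag_{i\bar j}$ is $C^\alpha$, depending only on the fixed background data and on $g^{i\bar j}$.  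Standard interior parabolic Schauder theory then yields $u \in C^{2,\alpha}$ parabolically, hence $\vp \in C^{3,\alpha}$.  Iterating---each round upgrades the regularity of the coefficients and right-hand side by one derivative, and by Schauder this upgrades $u$ by one derivative as well---produces the desired $C^{\ell,\alpha}$ estimate for every $\ell \in \N$.  Time derivatives come along automatically, since $\ddt\vp = 1 - e^\rho$ is algebraically determined by $\vp$ and its complex Hessian via the flow equation.

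To pass from interior estimates to the full slab $X \times [0,T)$ it suffices to note that $X$ is compact, so no spatial boundary arises; the initial data $\vp_0$ is smooth, so applying parabolic Schauder theory up to $\{t = 0\}$ with smooth initial trace gives uniform control on the closed slab.  I do not expect any serious obstacle in this corollary: the essential nonlinear analysis was already carried out in Proposition~\ref{prop: Krylov}, and what remains is an entirely linear bootstrap.  The only points requiring minor care are tracking the dependence of $F_k$ on the previously bootstrapped data at each iteration, and verifying that $(1 - \ddt \vp)^{-1}$ retains the requisite regularity at each stage---both of which follow directly from the preceding step of the iteration.
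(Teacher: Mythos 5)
Your proposal is correct and follows exactly the route the paper intends: the paper gives no details beyond ``combining [Proposition~\ref{prop: Krylov}] with the Schauder theory and a standard bootstrapping argument,'' and your differentiation of the equation in spatial directions, the uniform parabolicity coming from the two-sided bound on $e^{\rho}=1-\ddt\phi$, and the iterated parabolic Schauder estimates constitute precisely that standard bootstrap. (The only quibble is a harmless bookkeeping slip in your expression for $F_k$, which should contain $-g^{i\bar j}\partial_k\hag_{i\bar j}$ rather than $(\hat g^{i\bar j}-g^{i\bar j})\partial_k\hag_{i\bar j}$; either way it is $C^\alpha$ with the stated dependence.)
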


\section{The inverse Monge-Amp\`ere flow on canonically polarized K\"ahler manifolds}\label{sec: DMflowCan}

In this section we consider the flow in the case that $\lambda =-1$.  The goal of is to establish the long-time existence of the flow and the convergence towards the unique K\"ahler-Einstein metric with negative Ricci curvature.  We begin with some easy estimates.

\begin{lem}\label{lem: rhobdcanon}
Along the $MA^{-1}$-flow with $\lambda =-1$ we have
\[
\inf_{X}\rho_0 \leq \rho \leq \max\{ \sup_{X} \rho_0, 1+\dot{c}(0)\}.
\]
\end{lem}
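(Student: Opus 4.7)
The plan is to apply the parabolic maximum principle directly to $\rho$, after deriving a scalar evolution equation and showing that $\dot c(t)$ is controlled by its initial value.

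First I would differentiate the relation $\rho = -\lambda\phi - \log(\omega_\phi^n/\omega_0^n) + \rho_0 + c(t)$ in time. With $\lambda=-1$ and substituting the flow equation $\dot\phi = 1-e^{\rho}$, this gives
\[
\dot\rho \;=\; \dot\phi - \Delta_{\omega_\phi}\dot\phi + \dot c(t)\;=\;(1-e^{\rho}) + \Delta_{\omega_\phi}e^{\rho} + \dot c(t).
\]
Since $\Delta_{\omega_\phi}e^{\rho} = e^{\rho}\bigl(\Delta_{\omega_\phi}\rho + |\nabla\rho|^{2}_{\omega_\phi}\bigr) = e^{\rho}L_{\rho}\rho$, the evolution can be written as the quasilinear parabolic equation
\[
\bigl(\partial_t - e^{\rho}L_{\rho}\bigr)\rho \;=\; (1-e^{\rho}) + \dot c(t).
\]

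Next I would apply the parabolic maximum principle on $X\times[0,T_1]$ for arbitrary $T_1<T$. If $\rho$ attains its maximum at an interior time $t_0>0$, then at that point $\partial_t\rho\geq 0$, $\nabla\rho=0$, and $L_\rho\rho=\Delta_{\omega_\phi}\rho\leq 0$, so the displayed equation forces $e^{\rho}\leq 1+\dot c(t_0)$. The analogous inequality at a space-time minimum with $t_1>0$ gives $e^{\rho}\geq 1+\dot c(t_1)$.

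The decisive step is controlling $\dot c(t)$. By Lemma~\ref{lem: BasicProps}(iii) with $\lambda=-1$ we have $c(t) = -D(\phi(t)) - E(\phi_0) + \log V$, hence $\dot c(t) = -\dot D(\phi) = R(\phi)\geq 0$. Corollary~\ref{cor: DconvFlow} says that $R(\phi)$ is nonincreasing along the flow, so $\dot c$ is monotonically nonincreasing and $0\leq \dot c(t)\leq \dot c(0)$ for every $t\geq 0$. Feeding this into the two inequalities above: at an interior max we get $\rho \leq \log(1+\dot c(0))\leq 1+\dot c(0)$; at an interior min we get $\rho\geq 0$. The cases where the extremum is attained at $t=0$ give $\sup_X\rho_0$ and $\inf_X\rho_0$ directly, and the normalization $\int_X e^{\rho_0}\omega_0^n = V$ implies $\inf_X\rho_0\leq 0\leq \sup_X\rho_0$, so the interior bounds are absorbed into the initial bounds. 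The claimed two-sided estimate follows.

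The main obstacle is not the maximum principle itself but recognizing that the convexity of the Ding functional along the flow (Corollary~\ref{cor: DconvFlow}) is precisely what promotes the naive, time-dependent bound $e^{\rho}\leq 1+\dot c(t)$ into a bound depending only on the initial data; without this observation one would be left with a pointwise-in-time estimate unsuitable for proving long-time existence.
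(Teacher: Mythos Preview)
Your argument is correct and follows the same route as the paper: derive the evolution equation for the Ricci potential, apply the parabolic maximum principle, and invoke the convexity of $D$ along the flow (Corollary~\ref{cor: DconvFlow}) to replace $\dot c(t)$ by $\dot c(0)$; the paper's only cosmetic difference is that it applies the maximum principle to $e^{\rho}$ rather than to $\rho$. One wording caveat: your phrase ``the interior bounds are absorbed into the initial bounds'' is accurate for the lower bound (where the interior minimum gives $\rho\geq 0\geq \inf_X\rho_0$) but not for the upper, since $1+\dot c(0)$ need not be dominated by $\sup_X\rho_0$---which is exactly why the stated upper bound is a $\max$ of the two.
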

\begin{proof}
We compute the evolution equation for $e^{\rho}$.  We have
\[
\begin{aligned}
\ddt \rho &= -\Delta_{\omega} \dot{\phi} +\dot{\phi} + \dot{c}\\
&= \Delta_{\omega} e^{\rho} +1-e^{\rho} +\dot{c}
\end{aligned}
\]
and so
\[
\left(\ddt -e^{\rho} \Delta_{\omega} \right)e^{\rho} = -e^{2\rho}+e^{\rho}+\dot{c}e^{\rho}.
\]
By the normalization condition
\[
\int_{X}e^{\rho} \omega^n = \int_{X} \omega^{n}
\]
we see that $\inf_{X} e^{\rho} \leq 1 \leq \sup_{X}e^{\rho}$.  It follows that if $\rho$ achieves its minimum at $(x^*, t^{*}) \in(0,T]\times X$, then at this point we have
\[
\left(\ddt -e^{\rho} \Delta_{\omega} \right)e^{\rho} \geq \dot{c}e^{\rho}.
\]
On the other hand, $\dot{c} \geq 0$ by Lemma~\ref{lem: BasicProps}, and so by the strong maximum principle we conclude that $\rho$ achieves its minimum at $t=0$.  At the maximum of $e^{\rho}$ we obtain
\[
e^{\rho} \leq 1+\dot{c}.
\]
On the other hand, $\dot{c} = -\dot{D}$, and the Ding functional is convex along the flow.  Thus $\dot{c} \leq \dot{c}(0)$, and the result follows.
\end{proof}
We will use this to prove that the Ding functional is uniformly bounded from below along the flow.  We apply equation~\eqref{eq: DingConvex} in the case $\lambda=-1$, recalling that $\nu_1\leq 0$, to obtain
\[
\frac{d^2}{dt^2}D(\phi) \geq\frac{2}{V} \int_{X} |\tilde{f}|^2 e^{\rho} \omega^n.
\]
Now we need
\begin{prop}
Let $f=e^{\rho}-1$, and $\tilde{f} = f- \frac{1}{V} \int_{X}fd\mu$. There is a constant $\delta_0>0$ depending only on the initial data so that
\[
\int_{X}\tilde{f}^2e^{\rho}\omega^n \geq \delta_0 \int_{X} f^2 \omega^n + \frac{\delta_0}{V}\left( \int_{X} f^2\omega^n \right)^2.
\]
\end{prop}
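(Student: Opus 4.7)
The plan is to reduce the inequality to a comparison between the two $L^2$ inner products induced by $\omega^n$ and $d\mu = e^\rho\omega^n$, and then expand $\int_X \tilde f^2 \omega^n$ by brute force using the crucial identity $\int_X f\,\omega^n = 0$.

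The main observation I would exploit is Lemma~\ref{lem: rhobdcanon}, which gives a uniform two-sided bound $0 < m \le e^\rho \le M$ along the flow in the canonically polarized case. With $\delta_0 := m$, one immediately gets
\begin{equation*}
\int_X \tilde f^2\, e^\rho \omega^n \;\ge\; \delta_0 \int_X \tilde f^2\, \omega^n.
\end{equation*}
So the whole job is to show that $\int_X \tilde f^2 \omega^n$ equals $\int_X f^2 \omega^n + V^{-1}(\int_X f^2 \omega^n)^2$ on the nose.

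To do this I would use the normalization $\int_X e^\rho \omega^n = V$, which is precisely the statement $\int_X f\,\omega^n = 0$. Setting $\bar f := V^{-1}\int_X f\, d\mu$, expansion gives
\begin{equation*}
\int_X \tilde f^2 \omega^n \;=\; \int_X f^2 \omega^n - 2\bar f \int_X f\,\omega^n + V \bar f^{\,2} \;=\; \int_X f^2 \omega^n + V \bar f^{\,2}.
\end{equation*}
Then $\bar f$ itself can be rewritten using $e^\rho = f+1$: since $\int_X f\,\omega^n = 0$,
\begin{equation*}
V \bar f \;=\; \int_X f(f+1)\,\omega^n \;=\; \int_X f^2 \omega^n,
\end{equation*}
so that $V\bar f^{\,2} = V^{-1}\bigl(\int_X f^2 \omega^n\bigr)^2$. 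Substituting back and combining with the $m$-lower bound yields the claim with $\delta_0 = m = e^{\inf_X \rho_0}$ (which is strictly positive and depends only on the initial datum, as required).

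There is really no hard step here; the only thing to be careful about is that a naive attempt going through $\int \tilde f^2 d\mu = \int f^2 d\mu - V^{-1}(\int f d\mu)^2$ produces a \emph{minus} sign on the quadratic term in $\int f^2\omega^n$ and appears to fail. The point is that one must \emph{not} project orthogonally in the $d\mu$ inner product to compute the lower bound — instead, trade $d\mu$ for $\omega^n$ using the pointwise positive lower bound on $e^\rho$, after which the Poincaré-type identity above in the ordinary $\omega^n$ inner product supplies both terms on the right-hand side with the correct sign. The availability of the lower bound $m > 0$ is precisely what distinguishes the canonically polarized case from the Fano case.
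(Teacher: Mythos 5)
Your proof is correct and is essentially identical to the paper's: both use the pointwise lower bound $e^{\rho}\geq e^{\inf_X\rho_0}=\delta_0$ from Lemma~\ref{lem: rhobdcanon} to pass from $d\mu$ to $\omega^n$, then expand $\int_X\tilde f^2\omega^n$ using $\int_X f\,\omega^n=0$ and the identity $\int_X f\,d\mu=\int_X f^2\omega^n$. Your closing remark about why one must not project in the $d\mu$ inner product is a correct and useful observation, but the argument itself matches the paper step for step.
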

\begin{proof}
The proof is straightforward.  To ease notation, let us again use the notation $d\mu =e^{\rho} \omega^n$.  By Lemma~\ref{lem: rhobdcanon} we have $d\mu \geq \delta_0 \omega^n$ so
\[
\begin{aligned}
\int_{X}\tilde{f}^2d\mu &\geq \delta_0 \int_{X}\tilde{f}^2 \omega^n\\
&= \delta_0 \int_{X} f^{2}\omega^n - 2\frac{\delta_{0}}{V} \left(\int_{X} f d\mu \right)\left(\int_{X} f \omega^n\right) + \frac{\delta_0}{V} \left( \int_{X}f d\mu\right)^2.
\end{aligned}
\]
On the other hand, by the normalization of $\rho$ we have $\int_{X}f \omega^n =0$.  This implies that the second term above is zero, and the third term can be written as
\[
\int_{X}fd\mu = \int_{X}f(f+1)\omega^n = \int_{X}f^2\omega^n.
\]
\end{proof}
Summarizing, we have proved that
\[
\frac{d^2}{dt^2} D(\phi) \geq \frac{2\delta_0}{V} \left( \int_{X} f^2 \omega^n + \frac{1}{V}\left( \int_{X} f^2\omega^n \right)^2\right).
\]
On the other hand, we have $\ddt D(\phi) = -\frac{1}{V}\int_{X}f^{2}\omega^n$.
\begin{prop}\label{prop: DingODE}
Along the $MA^{-1}$-flow, with $\lambda=-1$, there is a constant $\delta$ depending only on the initial data so that
\begin{equation}\label{eq: Dcanflow}
\frac{d^2}{dt^2} D(\phi) \geq 2\delta \left( -\ddt D(\phi) + \left( \ddt D(\phi)\right)^2\right).
\end{equation}
Furthermore, there exists a constants $B>0$, depending only on the initial data so that
\[
0 \geq \dot{D} \geq -Be^{-\delta t}.
\]
In particular, the Ding functional is uniformly bounded from below, and the constant $c(t)$ is uniformly bounded from above for as long as the flow exists.
\end{prop}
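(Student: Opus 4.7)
The first inequality (3.2) is essentially a restatement of the preceding display, and I would derive it by pure substitution. From the bound
\[
\frac{d^2}{dt^2}D(\phi) \geq \frac{2\delta_0}{V}\int_X f^2\omega^n + \frac{2\delta_0}{V^2}\left(\int_X f^2\omega^n\right)^2,
\]
and the identity $\dot D = -R(\phi) = -\frac{1}{V}\int_X f^2\omega^n$, one has $\int_X f^2\omega^n = -V\dot D$. Plugging this in,
\[
\ddot D \geq 2\delta_0(-\dot D) + 2\delta_0 \dot D^{\,2} = 2\delta_0\bigl(-\dot D + \dot D^{\,2}\bigr),
\]
so \eqref{eq: Dcanflow} holds with $\delta = \delta_0$, the constant produced from Lemma~\ref{lem: rhobdcanon} in the previous proposition.

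For the decay, the plan is to convert \eqref{eq: Dcanflow} into a scalar Riccati-type differential inequality and apply Gronwall. Set $u(t) := -\dot D(\phi(t))$; note $u \geq 0$ because $D$ is decreasing along the flow (Lemma~\ref{lem: BasicProps}(ii)), and $u(0) = R(\phi_0)$ depends only on the initial data. Inequality \eqref{eq: Dcanflow} then reads
\[
\dot u \leq -2\delta\,(u + u^2) \leq -2\delta\, u,
\]
the second inequality using $u \geq 0$. By Gronwall, $u(t) \leq u(0)\,e^{-2\delta t}$, hence
\[
0 \geq \dot D(\phi(t)) \geq -R(\phi_0)\,e^{-2\delta t} \geq -B\,e^{-\delta t}
\]
for any $B \geq R(\phi_0)$, which establishes the claimed exponential decay with $B$ depending only on the initial data (one could actually extract the sharper rate $2\delta$ by keeping the nonlinear term and separating variables, but this is not needed).

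Finally, integrating the decay estimate gives the lower bound on $D$: for any $t \geq 0$,
\[
D(\phi(t)) = D(\phi_0) + \int_0^t \dot D(\phi(s))\,ds \geq D(\phi_0) - \frac{B}{\delta}\bigl(1 - e^{-\delta t}\bigr) \geq D(\phi_0) - \frac{B}{\delta}.
\]
Since $\lambda = -1$, Lemma~\ref{lem: BasicProps}(iii) gives $c(t) = -D(\phi(t)) - E(\phi_0) + \log V$, and the lower bound on $D$ translates directly into a uniform upper bound on $c(t)$.

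There is no real obstacle here: once \eqref{eq: Dcanflow} is in place, everything reduces to an elementary ODE argument, and the only subtlety is bookkeeping to verify that the constants $\delta$ and $B$ depend only on the initial data — which they do, since $\delta_0$ came from the uniform bounds on $\rho$ furnished by Lemma~\ref{lem: rhobdcanon}, and $u(0) = R(\phi_0)$ is manifestly determined by $\phi_0$.
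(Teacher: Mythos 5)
Your proof is correct, and the overall strategy (derive the differential inequality \eqref{eq: Dcanflow} by substitution, then treat it as a scalar ODE problem) matches the paper's. The only genuine difference is in how the ODE inequality is handled: the paper compares $\dot D$ against the explicit solution $A(t) = -1/(Ce^{2\delta t}-1)$ of the Riccati equation $\dot A = 2\delta(-A+A^2)$, choosing $C>1$ close enough to $1$ that $A(0) \leq \dot D(0)$, and then integrates $A$ to bound $D$ from below. You instead observe that $u := -\dot D \geq 0$ (since $\dot D = -R(\phi) \leq 0$), so the quadratic term in $\dot u \leq -2\delta(u+u^2)$ only helps, and a linear Gronwall bound $u(t) \leq u(0)e^{-2\delta t}$ suffices. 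Your version is slightly cleaner: it avoids tuning the constant $C$ and makes the dependence $B = R(\phi_0)$ completely explicit, at the (irrelevant) cost of the sharper constant hidden in the nonlinear comparison. The remaining steps --- integrating the decay to bound $D$ below and invoking Lemma~\ref{lem: BasicProps}(iii) with $\lambda=-1$ to bound $c(t)$ above --- are exactly as in the paper.
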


\begin{proof}
We have already proved equation~\eqref{eq: Dcanflow}, so it suffices to prove the lower boundedness.  This follows from analyzing the ODE for which $D$ is a super-solution.  The function
\[
A(t) := \frac{-1}{Ce^{2\delta t}-1}
\]
solves the ODE
\[
\ddt A = 2\delta( -A + A^2).
\]
By choosing $C >1$ sufficiently close to $1$, we can arrange that $\ddt D(\phi)|_{t=0} \geq A(0)$.  The comparison principle implies that $\ddt D(\phi) \geq A(t)$ for as long as the flow exists.  Since $A$ is integrable on $[0,\infty)$ we obtain the uniform lower bound for the Ding functional along the flow.  The upper bound for the constant $c(t)$ follows from Lemma~\ref{lem: BasicProps} (iii).
\end{proof}
An immediate corollary is that the $C^{0}$ norm of $\phi(t)$ is uniformly bounded along the flow.

\begin{cor}\label{cor: C0bdcanon}
There exists a uniform constant $C$, depending only on the initial data, so that
\[
|\phi(t)| \leq C
\]
holds along the flow.
\end{cor}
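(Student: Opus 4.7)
The plan is to bound $\phi$ above and below separately, combining the two-sided bound on the Ding functional furnished by Proposition~\ref{prop: DingODE}, the constancy of the Aubin--Yau energy from Lemma~\ref{lem: BasicProps}(i), and the pointwise bounds on $\rho$ from Lemma~\ref{lem: rhobdcanon}. The starting observation is that $D(\phi(t))$ is uniformly bounded above and below: above by $D(\phi_0)$ since $D$ is monotonically decreasing, and below by Proposition~\ref{prop: DingODE}. Because $\lambda = -1$, the explicit formula $D(\phi) = -E(\phi) + \log\int_X e^{\phi + \rho_0}\omega_0^n$, combined with $E(\phi(t)) \equiv E(\phi_0)$, yields a uniform two-sided bound
\begin{equation*}
0 < c_1 \leq \int_X e^{\phi(t) + \rho_0}\omega_0^n \leq c_2
\end{equation*}
along the flow; equivalently $|c(t)| \leq C$ in view of~\eqref{eq: DMflow2}.

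For the upper bound on $\phi$, I would normalize by writing $\phi(\cdot,t) = \underline{\phi} + M_t$ with $M_t := \sup_X \phi(\cdot,t)$ and $\underline{\phi}$ an $\omega_0$-psh function satisfying $\sup_X \underline{\phi} = 0$. By the classical compactness property of $\omega_0$-psh functions, such $\underline{\phi}$ have $L^1(\omega_0^n)$ norm bounded by a constant $C_X$ depending only on $(X,\omega_0)$. Jensen's inequality then gives
\begin{equation*}
\frac{1}{V}\int_X e^{\underline{\phi}}\omega_0^n \geq \exp\!\Bigl(\frac{1}{V}\int_X \underline{\phi}\,\omega_0^n\Bigr) \geq e^{-C_X/V},
\end{equation*}
and since $\int_X e^{\phi}\omega_0^n = e^{M_t}\int_X e^{\underline{\phi}}\omega_0^n$, the upper bound $\int_X e^{\phi + \rho_0}\omega_0^n \leq c_2$ together with the boundedness of $\rho_0$ forces $M_t \leq C$.

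For the lower bound on $\phi$, I would argue at a spatial minimum. At a point $x_t$ where $\phi(\cdot,t)$ attains its minimum one has $\omega_\phi \geq \omega_0$, hence $(\omega_0^n/\omega_\phi^n)(x_t) \leq 1$. Using the identity $e^{\rho} = e^{\phi + \rho_0 + c(t)}\,\omega_0^n/\omega_\phi^n$ together with the lower bound $\rho \geq \inf_X \rho_0$ from Lemma~\ref{lem: rhobdcanon}, this yields
\begin{equation*}
e^{\inf_X \rho_0} \leq e^{\rho(x_t,t)} \leq e^{\phi(x_t,t) + \rho_0(x_t) + c(t)},
\end{equation*}
and rearranging, together with the already-established upper bound $c(t) \leq C$, produces $\inf_X \phi(\cdot,t) \geq -C'$.

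There is no single decisive obstacle: once Proposition~\ref{prop: DingODE}, Lemma~\ref{lem: BasicProps}(i) and Lemma~\ref{lem: rhobdcanon} are in hand, the $C^0$ estimate reduces to a short chain of classical manipulations. The conceptual point is simply that conservation of $E$ converts the two-sided Ding bound into a uniform integral bound on $e^{\phi}$, and this, paired with the pointwise control on $\rho$, is exactly what is required to close up the estimate from both sides.
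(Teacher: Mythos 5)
Your proof is correct and follows essentially the same route as the paper: the lower bound on $\phi$ via the spatial minimum, the identity for $e^{\rho}$, Lemma~\ref{lem: rhobdcanon} and the upper bound on $c(t)$ is the paper's argument verbatim, and the upper bound rests on the same two-sided control of $c(t)$ coming from Proposition~\ref{prop: DingODE} and the monotonicity statements. The only cosmetic difference is in extracting $\sup_X\phi\leq C$ from the integral bound: the paper applies Jensen to bound $\int_X\phi\,\omega_0^n$ from above and then invokes the Green's function mean-value inequality for $\omega_0$-psh functions, while you invoke the equivalent $L^1$-compactness of sup-normalized $\omega_0$-psh functions together with Jensen in the other direction; both are standard and interchangeable.
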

\begin{proof}
We first prove the lower bound.  Let $x_t \in X $ be the point where $\phi(t)$ achieves its infimum.  By Lemma~\ref{lem: rhobdcanon}, at the point $x_t$ we have
\[
\inf_{X} \rho_0 \leq \rho = \phi - \log \left( \frac{\omega_{\phi}^n}{\omega_{0}^{n}}\right) + \rho_0 +c(t) \leq \phi + \rho_0 +c(t)
\]
where we used that $\omega_\phi \geq \omega_0$ at $x_t$, the minimum of $\phi(t)$.  By Proposition~\ref{prop: DingODE} the constant $c(t)$ is uniformly bounded from above, so
\[
\phi(x_t) \geq -C
\]
for a uniform constant $C$.  For the upper bound, we use that $c(t)$ is increasing by Lemma~\ref{lem: BasicProps} $(iii)$.  From Jensen's inequality we have
\[
-c(0) \geq -c(t) = \log \left(\frac{1}{V} \int_{X}e^{\phi +\rho_0} \omega_0^n \right) \geq \frac{1}{V} \int_{X} (\phi +\rho_0) \omega_0^n
\]
so we conclude that $\int_{X}\phi \omega_0^n \leq C$ uniformly along the flow.  Since $\phi$ is $\omega_0$-plurisubharmonic, a standard argument involving the Green's function yields $
\phi(t) \leq C$ (see, e.g. \cite{Sze14}).
\end{proof}

We can now prove the main result of this section.

\begin{proof}[Proof of Theorem~\ref{thm: mainCanonPolar}]
Combining Corollary~\ref{cor: C0bdcanon} and Lemma~\ref{lem: rhobdcanon}, we see that $|\phi| + |\rho|$ is uniformly bounded along the flow.  By Proposition~\ref{prop: C2est} there is a constant $A$ so that $\omega_{\phi} \leq A \omega_0$ uniformly. Now we write
\[
-\log\left(\frac{\omega_\phi^{n}}{\omega_0^n}\right) = \rho -\phi -\rho_0 -c(t) \leq C
\]
using the bounds for $\phi, \rho$ and $c(t)$.  Since $\omega_{\phi}$ is bounded from above we get that $\omega_{\phi} \geq A^{-1}\omega_0$, after possibly increasing $A$.  By Proposition~\ref{prop: Krylov} and Corollary~\ref{cor: highEst}, we obtain uniform $C^{\ell,\alpha}$ bounds along the flow.  It follows that the flow exists for all time. 

Next we address the convergence of the flow.  Since we have obtained uniform $C^{\ell}$ bounds, the subsequential convergence is clear.  To obtain convergence of the $\phi(t)$ without subsequences, we note that for times $s>t$ we have
\[
\begin{aligned}
\int_{X}|\phi(t)-\phi(s)| \omega_0^{n} &\leq \int_{t}^{s}d\tau\int_{X}|\dot{\phi}(\tau)| \omega_{0}^{n}\\
&\leq \int_{t}^{s}d\tau \left( V\int_{X} (\dot{\phi}(\tau))^2 \omega_0^{n} \right)^{1/2} \\
&\leq C \int_{t}^{s}d\tau \left( -V^2\dot{D}(\tau) \right)^{1/2}
\end{aligned}
\]
where we used that $\omega_0, \omega_\phi$ are uniformly equivalent along the flow.  By Proposition~\ref{prop: DingODE}, $0\leq -\dot{D} \leq Be^{-2\delta t}$ for uniform constants $B, \delta >0$, and so
\[
\int_{X}|\phi(t)-\phi(s)| \omega_{0}^{n} \leq C\left( e^{-\delta t} -e^{-\delta s} \right)
\]
which implies that $\phi(t)$ is Cauchy in $L^{1}(X,\omega_0)$.  It follows that $\phi(t)$ converges in $L^{1}$ to a limit $\phi_{\infty}$.  Furthermore, from the uniform $C^{\ell}$ estimates it is easy to see that $\phi(t)$ converges to $\phi_{\infty}$ in $C^{\ell}$ for all $\ell \in \mathbb{N}$.  It remains only to show that $\phi_{\infty}$ is K\"ahler-Einstein.  This is clear, since the estimate
$0\leq -\dot{D} \leq e^{-2\delta t}$ implies that
\[
\int_{X}(1-e^{\rho_{\infty}})^{2}\omega_{\phi_{\infty}}^{n}=\lim_{t\rightarrow \infty}\int_{X}(1-e^{\rho})^{2}\omega_{\phi}^{n}= \lim_{t\rightarrow \infty} -V\dot{D}(t) =0.
\]
Since $\omega_{\infty}$ is a smooth K\"ahler metric, we deduce that $\rho_{\infty} =0$, and hence $\phi_{\infty}$ is K\"ahler-Einstein.
\end{proof}
 
\section{The inverse Monge-Amp\`ere flow on Fano manifolds}\label{sec: DMflowFano}

We now turn our attention to the Fano setting, when $\lambda =+1$.  The first major difficulty is that there is no estimate analogous to Lemma~\ref{lem: rhobdcanon}.  Indeed, the evolution equation for $\dot{\phi}$ reads
\[
\left(\ddt -e^{\rho}\Delta_{\omega}\right) \dot{\phi} = (1-\dot{\phi})(\dot{\phi}-\dot{c})
\]
and when $\lambda =1$, $\dot{c}$ is negative.  The reader can check that the ODE
\[
\ddt f = (1-f)(f+a),
\]
for $a>0$, for which $\inf_{X}\dot{\phi}$ is a supersolution, has solutions diverging to $-\infty$ in finite time.  It is therefore necessary to take a more delicate approach.  We begin with some easy observations.  

\begin{lem}\label{lem: Fanoeasybd}
Along the inverse Monge-Amp\`ere flow on a Fano manifold there are constants $A_1, A_2 >0$ depending only on the initial data so that
\[
\phi(t) \leq A_1+t \qquad \rho(t) \geq -t+c(t)-A_2.
\]
\end{lem}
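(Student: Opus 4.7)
The plan splits into two independent parts.

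The upper bound $\phi(t)\leq A_1+t$ is immediate from the flow equation itself: since $\dot\phi=1-e^\rho\leq 1$, integration in $t$ gives $\phi(x,t)\leq \phi(x,0)+t$, so $A_1:=\sup_X\phi(0)$ works.

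For the lower bound on $\rho$, I apply the parabolic minimum principle to the auxiliary quantity $F(x,t):=\rho(x,t)+t-c(t)$. Differentiating the identity $\rho=-\phi-\log(\omega_\phi^n/\omega_0^n)+\rho_0+c(t)$ (this is~\eqref{eq: Ricci pot} with $\lambda=+1$) and using $\ddt\log(\omega_\phi^n/\omega_0^n)=\Delta_{\omega_\phi}\dot\phi$ together with $\dot\phi=1-e^\rho$, a short computation yields
\[
\ddt\rho = e^\rho-1+\Delta_{\omega_\phi}e^\rho+\dot c,\qquad\text{hence}\qquad \ddt F = e^\rho+\Delta_{\omega_\phi}e^\rho.
\]
Expanding $\Delta_{\omega_\phi}e^\rho=e^\rho\bigl(\Delta_{\omega_\phi}\rho+|\nabla\rho|^2_{\omega_\phi}\bigr)$ produces the parabolic differential inequality
\[
\bigl(\ddt-e^\rho\Delta_{\omega_\phi}\bigr)F \;=\; e^\rho\bigl(1+|\nabla\rho|^2_{\omega_\phi}\bigr)\;\geq\; e^\rho \;>\;0.
\]
The operator on the left has no zeroth-order term and the source is strictly positive, so the parabolic minimum principle on the compact manifold $X$ gives $\min_X F(\cdot,t)\geq \min_X F(\cdot,0)=\min_X\rho(0)-c(0)$. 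Setting $A_2:=c(0)-\min_X\rho(0)$ yields $\rho(x,t)\geq -t+c(t)-A_2$, as required.

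The computation is elementary and follows the template of the evolution equations already computed in the proof of Lemma~\ref{lem: rhobdcanon}; no serious obstacle arises. Of course, the resulting bounds are linear in $t$ and far from sharp---they leave open the possibility that $\phi\to+\infty$ and $\rho-c(t)\to-\infty$ as $t\to\infty$---but this crude linear control is exactly what is needed as a foothold before the finer arguments (invoking monotonicity and coercivity of the Ding functional) can upgrade them to the uniform-in-$t$ estimates needed for long-time existence on Fano manifolds.
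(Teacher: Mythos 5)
Your proposal is correct and follows essentially the same route as the paper: the upper bound on $\phi$ comes from $\dot\phi=1-e^\rho\leq 1$, and the lower bound on $\rho$ comes from the evolution equation $\ddt\rho=\Delta_{\omega_\phi}e^\rho+e^\rho-1+\dot c$ together with the parabolic minimum principle (the paper phrases this as the viscosity differential inequality $\ddt\inf_X\rho\geq -1+\dot c$ plus the ODE comparison principle, which is the same argument as your maximum-principle computation for $F=\rho+t-c(t)$).
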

\begin{proof}
The upper bound for $\phi$ follows from the evolution equation
\[
\dot{\phi} = 1-e^{\rho} \leq 1.
\]
For the lower bound of $\rho$ we compute
\[
\ddt \rho =\Delta_{\omega}e^{\rho} +e^{\rho}-1+\dot{c}.
\]
Thus $h(t) := \inf_{X}\rho(t)$ satisfies
\[
\ddt h \geq e^{h}-1+\dot{c} \geq -1+\dot{c}
\]
in the viscosity sense.  By the comparison principle we deduce $\inf_{X}\rho(t) \geq -t+c(t) -A_2$ for $A_2$ depending only on $\rho_0$.
\end{proof}

We now prove a lower bound for $\phi$ on $X\times[0,T)$ for $T<+\infty$.  As discussed at the beginning of this section, this does not follow easily from the evolution equation.  Instead,
we will use a compactness argument.  Recall that that the Aubin-Yau energy $E(\phi)$ is constant along the flow.  A standard computation shows that for an $\omega_{0}$-psh function $\psi$ we have
\[
E(\psi) = \frac{1}{(n+1)V} \sum_{j=0}^{n} \int_{X} \psi \, \omega_{\psi}^{j}\wedge \omega_0^{n-j}.
\]
Observe that if $\psi$ is negative, then we have
\[
-\frac{1}{V} \int_{X} \psi \omega_{\psi}^n \leq -(n+1) E(\psi).
\]
Let $\tilde{\phi}(t) = \phi(t)-\sup_{X}\phi(t)$, then along the $MA^{-1}$-flow we have
\[
\begin{aligned}
-\frac{1}{V}\int_{X}\tilde{\phi} \omega_{\tilde{\phi}}^n \leq -(n+1) E(\tilde{\phi}) &= -(n+1) E(\phi(t)) +(n+1) \sup_{X} \phi(t) \\
&\leq  -(n+1) E(\phi(0)) +C(t+1)
\end{aligned}
\]
where we used Lemma~\ref{lem: Fanoeasybd}.  Suppose that $T<+\infty$, and there exists a sequence of times $t_i \rightarrow T$ such that $\phi(t_i) \rightarrow-\infty$.  By Jensen's inequality we have
\[
c(t) \leq \frac{1}{V} \int_{X} (\phi(t)-\rho_0) \omega_0^n.
\]
Since $c(t)$ is convex, we have $c(t) \geq c(0) +t\dot{c}(0) >-\infty$, and so we deduce that $\sup_{X}\phi$ is uniformly bounded from below on $X\times[0,T)$.  Thus, up to passing to a subsequence we can assume that $\phi(t_i) \rightarrow \phi_T$ weakly, and in $L^{1}(X,\omega_0)$, where $\phi_T$ is some $\omega_0$-PSH function.  We need the following result

\begin{lem}[Guedj-Zeriahi]\label{lem: GZ}
Suppose that $\psi_j$ is a sequence of negative $\omega_0$-PSH functions on $X$ converging in $L^1$ to a $\omega_0$-PSH function $\psi_{\infty}$.  If there is a uniform constant $C$ such that
\[
-\int_{X}\psi_j \omega_{\psi_j}^n \leq C
\]
then $\psi_j$ lies in the energy space $\mathcal{E}^{1}(X,\omega_0)$, and in particular, $\psi_{\infty}$ has zero Lelong numbers.
\end{lem}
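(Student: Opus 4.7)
The plan is to reduce the claim to two well-known facts from pluripotential theory: (i) that the Aubin--Yau energy $E$ is upper semicontinuous and its sublevel sets are compact in $L^1$ (this is the main Guedj--Zeriahi compactness theorem for $\mathcal{E}^1$), and (ii) that membership in $\mathcal{E}^1$ forces all Lelong numbers to vanish. The key preliminary step is to bound $E(\psi_j)$ uniformly from below using the hypothesis $I_n(\psi_j) := -\int_X \psi_j\, \omega_{\psi_j}^n \leq C$.

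For this preliminary step, I would use the polarization formula
\[
E(\psi) = \frac{1}{(n+1)V} \sum_{k=0}^{n} \int_X \psi\, \omega_\psi^{k} \wedge \omega_0^{n-k}
\]
together with integration by parts on the negative quantities $I_k(\psi) := -\int_X \psi\, \omega_\psi^{k} \wedge \omega_0^{n-k}$. Expanding $\omega_\psi = \omega_0 + \dd \psi$ and using that $-\int_X \psi\, \dd\psi \wedge T = \int_X \sqrt{-1}\,\partial \psi \wedge \bar\partial \psi \wedge T \geq 0$ for any closed positive $(n-1,n-1)$-current $T$, I get the monotonicity $I_0(\psi) \leq I_1(\psi) \leq \cdots \leq I_n(\psi)$ for every negative $\omega_0$-PSH function $\psi$. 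Consequently $-E(\psi_j) \leq \tfrac{1}{V} I_n(\psi_j) \leq C/V$, so the sequence is uniformly bounded in $\mathcal{E}^1$-energy. (Each $\psi_j$ itself is bounded in the application, so trivially $\psi_j \in \mathcal{E}^1$; the real content is to transfer this bound to the limit.)

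Next I would invoke the Guedj--Zeriahi compactness theorem: the set $\{\psi \in \mathrm{PSH}(X,\omega_0) : \psi \leq 0,\ E(\psi) \geq -C'\}$ is compact in $L^1$, and $E$ is upper semicontinuous under $L^1$ convergence on this set. Since $\psi_j \to \psi_\infty$ in $L^1$ with $E(\psi_j) \geq -C'$, this immediately gives $\psi_\infty \in \mathcal{E}^1$ with $E(\psi_\infty) \geq -C'$. Finally, the claim about zero Lelong numbers follows from the classical fact that every $\psi \in \mathcal{E}^1(X,\omega_0)$ has identically vanishing Lelong numbers: if $\psi_\infty$ had a positive Lelong number $\nu > 0$ at some point $x_0$, a local comparison with the model weight $\nu \log|z-z_0|$ and the dominated estimate for the non-pluripolar Monge--Amp\`ere mass would force $I_n(\psi_\infty) = +\infty$, contradicting $\psi_\infty \in \mathcal{E}^1$.

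The main obstacle is the upper semicontinuity of $E$ along $L^1$ limits: the non-pluripolar Monge--Amp\`ere measures $\omega_{\psi_j}^n$ do not converge weakly to $\omega_{\psi_\infty}^n$ for general $L^1$ limits (convergence is only guaranteed under monotone limits), so one cannot simply pass to the limit term-by-term in the polarization formula. The standard way around this is to approximate $\psi_\infty$ from above by the decreasing envelopes $\widetilde{\psi}_k := \bigl(\sup_{j\geq k} \psi_j\bigr)^*$, apply the monotone continuity of the Monge--Amp\`ere operator to $\widetilde{\psi}_k \searrow \psi_\infty$, and combine with Hartogs' lemma on the Lipschitz approximation of $\psi_\infty$; this is precisely the argument carried out in Guedj--Zeriahi, and I would quote it rather than reproduce it.
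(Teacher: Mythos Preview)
Your proposal is correct and follows the same route the paper indicates: the paper does not give an argument but simply states that the lemma ``can be deduced by combining \cite{GZ07} Corollary 1.8 and Corollary 2.7,'' and your proof is precisely an unpacking of that citation. The monotonicity $I_0 \le I_1 \le \cdots \le I_n$ gives the uniform $\mathcal{E}^1$-energy bound, GZ07 Corollary~2.7 is the stability of the energy class under $L^1$ limits (your step invoking upper semicontinuity of $E$), and GZ07 Corollary~1.8 is the vanishing of Lelong numbers for functions in $\mathcal{E}(X,\omega_0) \supset \mathcal{E}^1(X,\omega_0)$.
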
 
This result can be deduced by combining \cite{GZ07} Corollary 1.8, and Corollary 2.7.  Since $\sup_{X}\phi(t)$ is uniformly bounded from above and below on $[0,T)$, Lemma~\ref{lem: GZ} implies that $\phi_T$ has zero Lelong numbers.  By Skoda's lemma (see, e.g. \cite[Corollary 3.2]{Zer01}) we have that
\[
\int_{X}e^{-p\phi_T} <+\infty
\]
for all $p>0$.  The effective version of Demailly-Koll\'ar's semi-continuity theorem for log-canonical thresholds \cite[Main Theorem 0.2, (2)]{DK01} (see also \cite{PS00}) says that
\[
e^{-p\phi(t_j)} \rightarrow e^{-p\phi_{T}} \quad \text{ in } L^{1}(X,\omega_0).
\]
Fixing $p=2$ we conclude that there is a uniform constant $C$ so that for all $j$ we have
\[
\int_{X}e^{-2\phi(t_j)} \omega_0^n \leq C.
\]
On the other hand, $\phi(t_j)$ solves the complex Monge-Amp\`ere equation
\[
\omega_{\phi(t_j)}^n = e^{-\phi(t_j)} \cdot e^{-\rho(t_j) +\rho_0 +c(t_j)} \cdot \omega_0^n.
\]
Recall that $\rho$ is uniformly bounded from below on $[0,T)$ thanks to Lemma~\ref{lem: Fanoeasybd}.  Since $e^{-\phi(t_j)}$ is uniformly bounded in $L^{2}$, this equation is of the form
\[
\omega_{\phi(t_j)}^n = e^{F} \cdot \omega_0^n.
\]
where $e^{F}$ is uniformly bounded in $L^2$.  We can therefore apply Kolodziej's $C^{0}$ estimate \cite{K03, K05}, or even the Alexandroff-Bakelman-Pucci estimate \cite{Blo05}, to deduce that
\[
\sup_{X}\phi(t_j) -\inf_{X}\phi(t_j) \leq C
\]
for a uniform constant $C$.  By the lower bound for $\sup_{\phi}$ we deduce that $\inf_{X}\phi(t_j) \geq -C >-\infty$ for a constant independent of $j$.  This is a contradiction.  We have proved
\begin{prop}\label{prop: Fanolowbd}
Suppose that the inverse Monge-Amp\`ere flow exists on $[0,T)$ for some $T<+\infty$.  There is a constant $C_{T} < +\infty$ such
\[
\inf_{X\times[0,T)} \phi(t) \geq -C_T.
\]
\end{prop}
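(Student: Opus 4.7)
The plan is to argue by contradiction via a compactness/pluripotential-theoretic argument, since the evolution equation for $\dot\phi$ admits supersolutions that blow up in finite time and so the ODE method used in the canonically polarized case is not available. Suppose that there is a sequence $t_i \uparrow T$ along which $\inf_X \phi(t_i) \to -\infty$. The first task is to control $\sup_X \phi(t)$ from \emph{both} sides on $[0,T)$: Lemma~\ref{lem: Fanoeasybd} already gives $\phi \leq A_1 + t$, and a lower bound for $\sup_X \phi(t)$ can be extracted from Jensen's inequality applied to the formula for $c(t)$ combined with the convexity of $c(t)$ (Corollary~\ref{cor: DconvFlow}), which yields $c(t) \geq c(0) + t\dot c(0)$ and hence a uniform lower bound on $\frac{1}{V}\int_X \phi(t)\,\omega_0^n$, and thus on $\sup_X \phi$.

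Having bounded $\sup_X \phi(t)$ on $[0,T)$, I would work with the normalized potentials $\tilde\phi(t) := \phi(t) - \sup_X \phi(t)$, which are non-positive $\omega_0$-psh functions. The first key input is that the Aubin--Yau energy is conserved along the flow (Lemma~\ref{lem: BasicProps}(i)), which, combined with the well-known representation of $E$ as a sum of mixed Monge--Amp\`ere integrals, yields a uniform bound of the form $-\int_X \tilde\phi(t)\,\omega_{\tilde\phi(t)}^n \leq C_T$ on $[0,T)$. By weak compactness of the set of normalized $\omega_0$-psh functions we may pass to a subsequence and extract a weak/$L^1$-limit $\phi_T$ of $\phi(t_i)$ (using the assumed divergence of $\inf_X \phi$ to avoid collapse to $-\infty$ uniformly on sets of positive measure—this is where the uniform $\sup_X$ bound is essential).

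The next step is the pluripotential-theoretic heart of the argument: the Monge--Amp\`ere energy bound places $\phi(t_i)$ uniformly in the finite-energy class $\mathcal{E}^1(X,\omega_0)$, so by a Guedj--Zeriahi type result the limit $\phi_T$ has zero Lelong numbers everywhere. Skoda's integrability theorem then gives $e^{-p\phi_T} \in L^1(\omega_0)$ for every $p>0$, and the effective semi-continuity theorem of Demailly--Koll\'ar upgrades this to a uniform bound $\int_X e^{-2\phi(t_i)}\,\omega_0^n \leq C$ along the sequence. Together with the lower bound on $\rho$ from Lemma~\ref{lem: Fanoeasybd} and the fact that $c(t_i)$ is bounded above on $[0,T)$, this exhibits $\phi(t_i)$ as a solution of a complex Monge--Amp\`ere equation $\omega_{\phi(t_i)}^n = e^{F_i}\omega_0^n$ with $e^{F_i}$ uniformly bounded in $L^2$.

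At this point Kolodziej's $C^0$-estimate (or alternatively the ABP-type estimate of B\l ocki) produces a uniform oscillation bound $\sup_X \phi(t_i) - \inf_X \phi(t_i) \leq C$, which combined with the already-established lower bound on $\sup_X \phi$ contradicts $\inf_X \phi(t_i) \to -\infty$. The step I expect to be the main obstacle is the pluripotential transition: proving that the limit $\phi_T$ actually lies in $\mathcal{E}^1$ and has zero Lelong numbers, because this is what converts the $a\ priori$ energy bound into the $L^p$ integrability of $e^{-\phi(t_i)}$ required to trigger Kolodziej's estimate. Everything else in the argument is either direct from the monotonicity properties of the flow or a standard citation.
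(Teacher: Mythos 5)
Your proposal is correct and follows essentially the same route as the paper: contradiction via the conserved Aubin--Yau energy, weak compactness of the normalized potentials, the Guedj--Zeriahi finite-energy/zero-Lelong-number lemma, Skoda plus Demailly--Koll\'ar to get the uniform $L^2$ bound on $e^{-\phi(t_i)}$, and finally Kolodziej (or the ABP estimate of B\l ocki) to bound the oscillation. All the key inputs you identify, including the lower bound on $\sup_X\phi$ from Jensen's inequality and the convexity of $c(t)$, are exactly those used in the paper.
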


Combining Proposition~\ref{prop: Fanolowbd} and Lemma~\ref{lem: Fanoeasybd} we have shown that if the $MA^{-1}$-flow exists on $[0,T)$ for $T<+\infty$, then there is a constant $C_T<+\infty$ so that
\[
|\phi(t)|_{L^{\infty}} -\inf_{X}\rho(t) \leq C_T.
\]
Thanks to Proposition~\ref{prop: C2est} we deduce that $\Delta_{\omega_0} \phi \leq C_{T}'$ on $[0,T)$.  The only remaining obstacle to obtaining the long-time existence of the flow is to prove
\begin{prop}
Suppose that the inverse Monge-Amp\`ere flow exists on $[0,T)$.  For every $T<+\infty$, there exists a constant $C_{T}< +\infty$ so that for all $t\in [0,T)$ we have
\[
C_{T}^{-1} \omega_0 \leq \omega_{\phi(t)} \leq C_{T} \omega_0.
\]
\end{prop}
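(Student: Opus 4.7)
The plan is to combine the estimates we have already accumulated with a parabolic Chern-Lu computation on a carefully chosen auxiliary quantity.

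First, I would collect the already-proved bounds on $[0,T)$. Proposition~\ref{prop: Fanolowbd} together with the upper bound in Lemma~\ref{lem: Fanoeasybd} gives $\|\phi\|_{L^\infty(X\times[0,T))}\leq C_T$. By Lemma~\ref{lem: BasicProps}(iii), $c(t)=D(\phi(t))+E(\phi_0)+\log V$; the Ding functional is convex in $t$ by Corollary~\ref{cor: DconvFlow} and $\dot D(0)=-R(\phi_0)$ is finite, so $c(t)$ is bounded on $[0,T)$. Substituting into Lemma~\ref{lem: Fanoeasybd} then yields $\rho\geq -C_T$ uniformly on $X\times[0,T)$. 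These are precisely the hypotheses needed for Proposition~\ref{prop: C2est}, which delivers $\Tr_{\omega_0}\omega_\phi\leq C_T$, i.e., $\omega_\phi\leq C_T\omega_0$.

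For the reverse inequality $\omega_\phi\geq C_T^{-1}\omega_0$, the Monge-Amp\`ere identity $\omega_\phi^n/\omega_0^n=e^{-\phi+\rho_0+c(t)-\rho}$ combined with the eigenvalue estimate $\lambda_{\min}\geq (\lambda_1\cdots\lambda_n)/(\Tr_{\omega_0}\omega_\phi)^{n-1}$ reduces the problem to a uniform upper bound $\rho\leq C_T$ on $X\times[0,T)$. Bounding $\rho$ from above is the main obstacle: the direct maximum principle applied to $\ddt\rho=\Delta_\omega e^\rho+e^\rho-1+\dot c$ fails because the $+e^\rho$ term has the wrong sign (heuristically the ODE $\dot m=e^m$ blows up in finite time). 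My plan is therefore to run the parabolic maximum principle on an auxiliary quantity of Chern-Lu type, for instance
\[
Q=\log \Tr_{\omega_\phi}\omega_0 + A\phi
\]
for $A$ larger than twice an upper bound on the bisectional curvature of $\omega_0$. Using $\Ric(\omega_\phi)=\omega_\phi+\dd\rho$ to re-express the Ricci contribution in the Chern-Lu inequality as $\Tr_{\omega_0}\omega_\phi+\Delta_{\omega_0}\rho$ (the first piece already bounded by the first step) and using $\Delta_\omega\phi=n-\Tr_\omega\omega_0$ to extract the negative $-A\Tr_\omega\omega_0$ from the $A\phi$ contribution, one aims to obtain a differential inequality of the schematic form $0\leq -\varepsilon\Tr_{\omega_\phi}\omega_0+C(1+|\Delta_{\omega_0}\rho|)$ at a maximum of $Q$.

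The key technical difficulty will be controlling the $\Delta_{\omega_0}\rho$ term generated by the Chern-Lu computation. This is not a priori bounded, and must be handled by re-writing $\rho=-\phi+\rho_0+c(t)-\log(\omega_\phi^n/\omega_0^n)$, breaking $\Delta_{\omega_0}\rho$ into the tame piece $\Delta_{\omega_0}\phi=\Tr_{\omega_0}\omega_\phi-n$ (already bounded) and the less tractable piece $\Delta_{\omega_0}\log(\omega_\phi^n/\omega_0^n)$, which encodes the scalar curvature of $\omega_\phi$, and absorbing the latter via Cauchy--Schwarz against the good $-\varepsilon\Tr_{\omega_\phi}\omega_0$ term. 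Once the two-sided equivalence $C_T^{-1}\omega_0\leq\omega_\phi\leq C_T\omega_0$ is in place, Proposition~\ref{prop: Krylov} and Corollary~\ref{cor: highEst} upgrade it to uniform $C^{\ell,\alpha}$ bounds for all $\ell$, so the flow extends smoothly past $T$, as needed for Theorem~\ref{thm: mainFano}.
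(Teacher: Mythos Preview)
Your reduction of the problem is correct and matches the paper: the upper bound $\omega_\phi\le C_T\omega_0$ follows from Proposition~\ref{prop: C2est} together with the bounds on $\phi$ and $\inf\rho$, and the lower bound is equivalent to an upper bound on $\rho$. The gap is in how you propose to obtain $\rho\le C_T$.

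Your parabolic Chern--Lu plan does not close. First, a sign: with $Q=\log\Tr_{\omega_\phi}\omega_0+A\phi$ the term coming from $-e^\rho\Delta_\omega(A\phi)=-Ae^\rho(n-\Tr_{\omega_\phi}\omega_0)$ contributes $+Ae^\rho\Tr_{\omega_\phi}\omega_0$, the wrong sign; you want $-A\phi$. More seriously, the Ricci contribution in the Schwarz computation for $\log\Tr_{\omega_\phi}\omega_0$ is not $\Tr_{\omega_0}\omega_\phi+\Delta_{\omega_0}\rho$; it is $g^{i\bar p}g^{q\bar j}R_{p\bar q}\hat g_{i\bar j}/\Tr_{\omega_\phi}\omega_0$. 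If you carry out the computation, the Hessian terms $\rho_{i\bar j}$ coming from $\Ric(\omega_\phi)=\omega_\phi+\dd\rho$ actually \emph{cancel} against those coming from $\partial_t\Tr_{\omega_\phi}\omega_0$ (since $\dot g_{i\bar j}=-(e^\rho)_{i\bar j}$), so the term you were worried about disappears. What survives is a first--order term of the form $e^\rho|\nabla\rho|^2_{\omega_\phi}$, and at a maximum of $\log\Tr_{\omega_\phi}\omega_0-A\phi$ the gradient condition relates $\nabla\Tr_{\omega_\phi}\omega_0$ to $\nabla\phi$, not $\nabla\rho$ to anything. Your proposed Cauchy--Schwarz absorption of ``$\Delta_{\omega_0}\log(\omega_\phi^n/\omega_0^n)$'' is therefore aimed at the wrong enemy and does not handle the actual obstruction.

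The paper instead applies the maximum principle directly to $\rho-\epsilon\phi$, and the key extra ingredient is a $C^1$ bound on $\phi$: since $\|\phi\|_{L^\infty}+\|\Delta_{\omega_0}\phi\|_{L^\infty}\le C_T$, elliptic regularity and Sobolev embedding give $|\nabla\phi|_{\omega_0}\le C_1$. The evolution is
\[
\left(\ddt-e^\rho\Delta_{\omega_\phi}\right)(\rho-\epsilon\phi)=(1+\epsilon)(e^\rho-1)+e^\rho|\nabla\rho|^2_{\omega_\phi}+\dot c+\epsilon e^\rho(n-\Tr_{\omega_\phi}\omega_0).
\]
At an interior maximum $\nabla\rho=\epsilon\nabla\phi$, hence $|\nabla\rho|^2_{\omega_\phi}\le \epsilon^2 C_1\Tr_{\omega_\phi}\omega_0$; choosing $\epsilon$ small (so that $\epsilon-\epsilon^2 C_1\ge\epsilon/2$) the bad gradient term is absorbed by the good $-\epsilon e^\rho\Tr_{\omega_\phi}\omega_0$, yielding $\Tr_{\omega_\phi}\omega_0\le C$ at the maximum, and hence an upper bound on $\rho$ there. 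This is exactly the $|\nabla\rho|^2$ control your Chern--Lu quantity cannot supply; the choice of test function $\rho-\epsilon\phi$ is what makes the gradient condition useful.
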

\begin{proof}
As remarked above, we have already obtained the upper bound $\omega_{\phi} \leq C_{T} \omega_0$.  In order to obtain the lower bound we claim that it suffices to prove that $\rho \leq C_T$.  To see this, note that
\[
\rho = -\phi -\log\left(\frac{\omega_{\phi}^n}{\omega_0^n}\right) + \rho_0 +c(t).
\]
Thanks to the bounds for $\phi$, an upper bound for $\rho$ implies an estimate $\omega_{\phi}^{n} \geq \delta \omega_0^n$.  Combining this with the upper bound $\omega_{\phi} \leq C_{T} \omega_0$, we see that $\omega_{\phi} \geq C_{T}^{-1}\omega_0$, after possibly increasing $C_T$.  To estimate $\rho$ we will use the maximum principle.  First, note that since
\[
\|\phi\|_{L^{\infty}} +\| \Delta_{\omega_0} \phi\|_{L^{\infty}} \leq C,
\]
on $[0,T)$, elliptic regularity theory and the Sobolev Imbedding theorem implies that
\[
\sup_{X} |\nabla \phi|_{\omega_0} \leq C_1
\]
for a constant $C_1$ depending on $T$, and $(X,\omega_0)$. Fix $\epsilon >0$ to be determined and compute
\[
\left(\ddt -e^{\rho}\Delta_{\omega}\right) \rho -\epsilon \phi= (1+\epsilon)(e^{\rho}-1)+e^{\rho}|\nabla \rho|^{2}_{\omega_{\phi}} +\dot{c} +\epsilon e^{\rho}(n-\Tr_{\omega_\phi}\omega_0).
\]
Let $(x^{*},t^{*})$ be the point where $\rho-\epsilon \phi$ achieves it's maximum on $X\times[0,T)$.  Without loss of generality, we can assume $t^{*}>0$.  At $(x^{*},t^{*})$ we have
\[
\nabla \rho =\epsilon \nabla \phi
\]
and so
\[
|\nabla \rho|^{2}_{\omega_{\phi}} = \epsilon^{2}|\nabla \phi|^{2}_{\omega_{\phi}} \leq \epsilon^{2}\left(\Tr_{\omega_{\phi}} \omega_0\right) |\nabla \phi|^{2}_{\omega_0} \leq \epsilon^2C_1\Tr_{\omega_{\phi}} \omega_0.
\]
By the maximum principle, at $(x^*,t^*)$ we have
\[
\begin{aligned}
0&\leq (1+\epsilon)(e^{\rho}-1)+e^{\rho}|\nabla \rho|^{2}_{\omega_{\phi}} +\dot{c} +\epsilon e^{\rho}(n-\Tr_{\omega_\phi}\omega_0)\\
&\leq (1+\epsilon)(e^{\rho}-1) + \epsilon^{2}C_1\Tr_{\omega_{\phi}} \omega_0+\dot{c} +\epsilon e^{\rho}(n-\Tr_{\omega_\phi}\omega_0)
\end{aligned}
\]
choosing $\epsilon$ small so that $\epsilon -C_1\epsilon^{2} \geq \frac{\epsilon}{2}$ we have
\[
0 \leq  (1+\epsilon)(e^{\rho}-1) -\frac{\epsilon}{2}\Tr_{\omega_{\phi}} \omega_0+\dot{c} +\epsilon e^{\rho}n.
\]
Rearranging we obtain that at $(x^*,t^*)$ we have the estimate
\[
\Tr_{\omega_{\phi}} \omega_0 \leq \frac{2}{\epsilon}\left((1+\epsilon)(1-e^{-\rho}) + \epsilon n\right) \leq  \frac{2}{\epsilon}\left(1+(n+1)\epsilon\right).
\]
As discussed above this implies an upper bound for $\rho$ at $(x^*, t^*)$.  Thus we conclude
\[
\rho-\epsilon\phi \leq \rho(x^*, t^*)-\epsilon\phi (x^*,t^*) \leq C_T - \epsilon \inf_{X\times[0,T)} \phi
\]
and we deduce that $\rho \leq C_T +2\epsilon\left(\phi -  \inf_{X\times[0,T)} \phi\right)$, which finishes the proof.
\end{proof}

Combining the above estimates with Proposition~\ref{prop: Krylov} and Corollary~\ref{cor: highEst} we obtain

\begin{thm}
The inverse Monge-Amp\`ere flow on a Fano manifold exists for all time.
\end{thm}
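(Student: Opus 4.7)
The plan is to argue by contradiction via a standard continuation method. Local-in-time smooth existence for the quasilinear parabolic equation~\eqref{eq: DMflow2} is automatic from general theory, so let $T \in (0,+\infty]$ be the maximal time of smooth existence and suppose, for contradiction, that $T < +\infty$.

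On $[0,T)$ I would collect the following a priori estimates. Lemma~\ref{lem: Fanoeasybd} gives the bounds $\phi \leq A_1 + T$ and $\rho \geq c(t) - T - A_2$, while Proposition~\ref{prop: Fanolowbd} supplies the lower bound $\phi \geq -C_T$. Since Lemma~\ref{lem: BasicProps}(iii) reads $c(t) = D(\phi(t)) + E(\phi_0) + \log V$ in the Fano case $\lambda = +1$, and since $D$ is both monotonically decreasing (Lemma~\ref{lem: BasicProps}(ii)) and convex in $t$ (Corollary~\ref{cor: DconvFlow}), we have $c(0) + t\,\dot c(0) \leq c(t) \leq c(0)$ on $[0,T)$, so $c$ is uniformly bounded on this interval. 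Combining these observations yields $\|\phi\|_{L^\infty(X\times[0,T))} + \|\inf_X \rho\|_{L^\infty([0,T))} \leq C_T$. Proposition~\ref{prop: C2est} then provides $\Tr_{\omega_0}\omega_\phi \leq C_T'$, and the preceding (unnumbered) proposition promotes this to the uniform two-sided equivalence $C_T^{-1}\omega_0 \leq \omega_{\phi(t)} \leq C_T\,\omega_0$ on $[0,T)$.

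Under this uniform ellipticity together with the uniform $C^{0}$ bound, Proposition~\ref{prop: Krylov} delivers a uniform $C^{2,\alpha}$ estimate, and Corollary~\ref{cor: highEst} bootstraps to uniform $C^{\ell,\alpha}$ estimates for every $\ell \in \mathbb{N}$. By Arzel\`a-Ascoli, $\phi(t)$ converges smoothly as $t \nearrow T$ to a smooth K\"ahler potential $\phi_T$ whose associated metric is uniformly equivalent to $\omega_0$. Short-time existence applied to $\phi_T$ then extends the flow smoothly to $[0, T+\varepsilon)$ for some $\varepsilon > 0$, contradicting the maximality of $T$, and so $T = +\infty$.

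The only genuine obstacle here -- the absence of a maximum-principle lower bound on $\phi$ or $\rho$ when $\lambda = +1$, as illustrated by the bad ODE discussed at the start of the section -- has already been resolved in Proposition~\ref{prop: Fanolowbd} via the pluripotential-theoretic compactness argument invoking Lemma~\ref{lem: GZ}, Skoda, Demailly-Koll\'ar semicontinuity, and Kolodziej's $C^0$ estimate. Granted those inputs, the present theorem reduces to the routine assembly of the previously established estimates outlined above.
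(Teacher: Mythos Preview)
Your proposal is correct and follows essentially the same approach as the paper: the paper's proof is literally the one-line ``Combining the above estimates with Proposition~\ref{prop: Krylov} and Corollary~\ref{cor: highEst} we obtain [the theorem],'' and you have simply spelled out how the pieces (Lemma~\ref{lem: Fanoeasybd}, Proposition~\ref{prop: Fanolowbd}, the two-sided metric bound, and the higher-order estimates) fit together in the standard continuation argument. Your explicit bound on $c(t)$ via the convexity of $D$ is the same observation the paper uses inside the proof of Proposition~\ref{prop: Fanolowbd}.
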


We now turn our attention to the situation when $X$ is Fano, K\"ahler-Einstein, and has no holomorphic vector fields (or equivalently, ${\rm Aut}(X)$ is discrete). Our goal is to show that the $MA^{-1}$-flow converges.  

The Mabuchi functional, or $K$-energy, on the space $\cH$ is defined by its variation
\[
\delta M(\phi) = -\frac{1}{V}\int_{X}\delta\phi(R_{\omega_{\phi}} -n) \omega_{\phi}^n
\]
where $R_{\omega_{\phi}}$ is the scalar curvature of $\omega_{\phi}$.  This functional integrates to a well-defined functional on $\cH$, which can be written as \cite{Ch00, Tian00}  
\[
\begin{aligned}
M(\phi) &=\frac{1}{V}\left[\int_{X}\log\left(\frac{\omega_{\phi}^n}{\omega_0^n}\right) \omega_{\phi}^n - \sum_{j=0}^{n-1} \int_{X} \phi {\rm Ric}(\omega_0)\wedge\omega_{\phi}^{j}\wedge \omega_{0}^{n-1-j}\right]\\
 &+ \frac{1}{V}\left[ \frac{n}{n+1}\sum_{j=0}^{n} \int_{X} \phi \omega_{\phi}^{j}\wedge \omega_0^{n-j}\right].
 \end{aligned}
\]
A simple integration by parts shows that we can write
\begin{equation}\label{eq:MabFunc}
M(\phi) = D(\phi) + H_{\nu_{\phi}}\left(\frac{\omega_{\phi}^{n}}{V}\right) +\frac{n!}{V}\int_{X} \rho_0 \omega_0^n
\end{equation}
where
\[
H_{\nu_{\phi}}\left(V^{-1}\omega_{\phi}^{n}\right) := \int_{X} \log\left(\frac{V^{-1}\omega_{\phi}^{n}}{\nu_{\phi}}\right) \frac{\omega_{\phi}^n}{V},\qquad \nu_{\phi} = \frac{e^{-\phi +\rho_0}\omega_0^{n}}{\int_{X} e^{-\phi +\rho_0} \omega_0^n}
\]
is the relative entropy. We have the following lemma
\begin{lem}\label{lem: Mabdec}
Along the $MA^{-1}$-flow on a Fano manifold we have
\[
\ddt M(\phi) = -\int_{X}|\nabla \rho|^{2}\nu_\phi.
\]
In particular, along the $MA^{-1}$-flow we have $M(\phi(t))\leq M(\phi(0))$.
\end{lem}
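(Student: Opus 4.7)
The plan is to differentiate $M(\phi)$ directly using the decomposition \eqref{eq:MabFunc} and exploit two identifications coming from the definition of the Ricci potential. From $\rho = -\phi - \log(\omega_\phi^n/\omega_0^n) + \rho_0 + c(t)$ with $\lambda = 1$, exponentiating yields the key identity
\[
e^\rho\,\omega_\phi^n = e^{-\phi + \rho_0 + c(t)}\omega_0^n.
\]
Combined with the normalization $\int_X e^\rho\omega_\phi^n = V$, this gives $e^{c(t)} = V / \int_X e^{-\phi+\rho_0}\omega_0^n$ and hence the clean identification $\nu_\phi = V^{-1} e^\rho\omega_\phi^n$. As a consequence, $\log(V^{-1}\omega_\phi^n/\nu_\phi) = -\rho$, so the entropy piece collapses to
\[
H_{\nu_\phi}\bigl(V^{-1}\omega_\phi^n\bigr) = -\frac{1}{V}\int_X \rho\,\omega_\phi^n,
\]
and $M(\phi)$ reduces (modulo an additive constant) to $D(\phi) - V^{-1}\int_X \rho\,\omega_\phi^n$.

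Next I will differentiate each term along the flow. We already have $\tfrac{d}{dt}D(\phi) = -R(\phi) = -V^{-1}\int_X(e^\rho-1)^2\omega_\phi^n$, and Lemma \ref{lem: BasicProps}(iii) gives $\dot c = \lambda\dot D = -R(\phi)$. For the second term, I will use $\tfrac{d}{dt}\omega_\phi^n = (\Delta_\omega\dot\phi)\omega_\phi^n$ with $\dot\phi = 1 - e^\rho$, together with
\[
\dot\rho = \Delta_\omega e^\rho + e^\rho - 1 + \dot c,
\]
which follows by differentiating \eqref{eq: Ricci pot}. Assembling,
\[
\ddt \int_X \rho\,\omega_\phi^n = \int_X\!\bigl((1-\rho)\Delta_\omega e^\rho + (e^\rho-1) + \dot c\bigr)\omega_\phi^n = \int_X e^\rho|\nabla\rho|^2\omega_\phi^n + V\dot c,
\]
since the $(e^\rho-1)$ integral vanishes by the normalization of $\rho$ and integrating $(1-\rho)\Delta_\omega e^\rho$ by parts produces the weighted gradient term.

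Putting the two derivatives together, the $-R(\phi)$ in $\ddt D(\phi)$ cancels precisely against the $V\dot c = -VR(\phi)$ contribution from the entropy piece, leaving
\[
\ddt M(\phi) = -\frac{1}{V}\int_X e^\rho|\nabla\rho|^2\omega_\phi^n = -\int_X |\nabla\rho|^2\,\nu_\phi,
\]
where the last equality uses the identification $\nu_\phi = V^{-1}e^\rho\omega_\phi^n$ from the first step. Monotonicity of $M$ along the flow follows immediately. There is no genuine conceptual obstacle here; the work is bookkeeping, and the only subtle point is to keep track of the $\dot c$ contribution so that one can recognize the cancellation that makes the answer collapse to the Dirichlet energy of $\rho$ weighted by the probability measure $\nu_\phi$.
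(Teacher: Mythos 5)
Your proof is correct and follows essentially the same route as the paper: both use the decomposition $M(\phi) = D(\phi) + H_{\nu_\phi}(V^{-1}\omega_\phi^n) + \mathrm{const}$ with $H_{\nu_\phi}(V^{-1}\omega_\phi^n) = -V^{-1}\int_X \rho\,\omega_\phi^n$, differentiate using $\dot\rho = \Delta_\omega e^\rho + e^\rho - 1 + \dot c$ and $\ddt\omega_\phi^n = (\Delta_\omega\dot\phi)\,\omega_\phi^n$, integrate by parts to produce $\int_X e^\rho|\nabla\rho|^2\omega_\phi^n$, and invoke $\dot c = \dot D$ from Lemma~\ref{lem: BasicProps}(iii) to cancel the remaining terms. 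The only difference is cosmetic bookkeeping (you combine $\ddt D$ and the entropy derivative at the end, the paper keeps them separate throughout).
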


\begin{proof}
By the definition of $\rho$, we have
\[
H_{\nu_{\phi}}\left(V^{-1}\omega_{\phi}^{n}\right)  = -\frac{1}{V}\int_{X}\rho \omega_{\phi}^{n}
\]
and so
\[
\begin{aligned}
\ddt H_{\nu_{\phi}}\left(V^{-1}\omega_{\phi}^{n}\right)  &= -\frac{1}{V}\int_{X}\left(\dot{\rho} +\rho \Delta_{\omega}\dot{\phi}\right) \omega_{\phi}^{n}\\
&= -\dot{c} +\frac{1}{V}\int_{X}\rho \Delta_{\omega_{\phi}}e^{\rho} \omega_{\phi}^n\\
&= -\dot{c} -\frac{1}{V}\int_{X} |\nabla \rho|^{2}e^{\rho} \omega_{\phi}^{n} = -\dot{c} -\int_{X}|\nabla \rho|^{2}\nu_{\phi}.
\end{aligned}
\]
At the same time, by Lemma~\ref{lem: BasicProps} (iii), we have that $\dot{D} = \dot{c}$ along the $MA^{-1}$-flow. The lemma follows.
\end{proof}

Assuming $X$ admits a K\"ahler-Einstein metric, and has no nontrivial holomorphic vector fields, a theorem of Phong-Song-Sturm-Weinkove \cite{PSSW08}  says that there are positive constants $A,B$ so that
\[
D(\phi) \geq AJ(\phi) -B, \quad \text{ where } \quad J(\phi) = \frac{1}{V}\int_{X}\phi \omega_0^n - E(\phi)
\]
and 
\[
M(\phi) \geq AJ(\phi)-B,
\]
in other words, the Ding, and Mabuchi functionals are coercive.  In fact, the coercivity of $M(\phi)$ follows from the coercivity of $D(\phi)$, since $M(\phi) \geq D(\phi) -C$ for a constant $C$ as easily follows from~\eqref{eq:MabFunc} and Jensen's inequality.

Using the coercivity of the Ding functional, together with the fact that $E(\phi)$ is constant along the flow, and the Ding functional is decreasing, we deduce a uniform upper bound for $\int_{X} \phi \omega_0^n$.  Since $\phi$ is $\omega_0$-PSH along the flow we obtain
\[
\phi(t) \leq C
\]
uniformly in time by a standard Green's function argument.  Next, since the Aubin-Yau energy is constant along the flow, arguing as in the proof of Proposition~\ref{prop: Fanolowbd}
\[
-\frac{1}{V}\int_{X}\phi \omega_\phi^n \leq -(n+1)E(\phi)+ n\sup_{X}\phi \leq C
\] 
and so $\sup\phi$ is also bounded from below along the flow.  Thus, we can take a subsequence $t_{k} \rightarrow +\infty$ such that  
\[
\phi(t_{k}) \rightarrow \phi_{\infty}
\]
weakly and in $L^1$ for some $\omega_0$-PSH function $\phi_{\infty}$ and $\phi_{\infty}$ has zero Lelong numbers.  By the Skoda's theorem and the semi-continuity theorem \cite{DK01} we know that $e^{-\phi(t_k)}$ converges to $e^{-\phi_{\infty}}$ in $L^{p}$ for all $p>0$.  It follows that we have convergence of probability measures
\[
\nu_{\phi(t_{k})} = \frac{e^{-\phi(t_k) +\rho_0}\omega_0^{n}}{\int_{X} e^{-\phi(t_k) +\rho_0} \omega_0^n}  \longrightarrow \nu_{\phi_{\infty}}.
\]
We now recall the definition of the strong topology.  Consider the set
\[
\mathcal{E}^{1}(X,\omega_0) := \left\{ \phi \in {\rm PSH}(X,\omega_0) : \int_{X} \langle \omega_\phi^n\rangle = V \text{ and } \int_{X} \phi \langle \omega_\phi^n\rangle <+\infty \right\}
\]
where $\langle \omega_\phi^n \rangle$ denotes the non-pluripolar product \cite{GZ07}.  Let $\mathcal{E}^{1}_{norm} \subset \mathcal{E}^{1}$ consist of those functions with $\sup_{X}\phi =0$. 

\begin{dfn}
The {\em strong topology} on $\mathcal{E}^1_{norm}$ is the coarsest refinement of the weak topology on $\mathcal{E}^{1}_{norm}$, such that the map
\[
\phi \longrightarrow J(\phi)
\]
(which is defined by the non-pluripolar product) becomes continuous.
\end{dfn}

See \cite{BBEGZ11}, and in particular, Proposition 2.6.
It follows that the $\phi \mapsto \langle \omega_\phi^n\rangle$ is continuous in the strong topology. 
Since the Mabuchi functional is coercive, and $M(\phi(t)) \leq M(\phi(0))$ by Lemma~\ref{lem:  Mabdec}, \cite[Theorem 4.14]{BBEGZ11} says that $\phi(t_k) \rightarrow \phi_{\infty}$ in the strong topology, and hence  $\omega_{\phi(t_k)} \rightarrow \omega_{\phi_{\infty}}^n$ as measures, where the latter measure is interpreted in the non-pluripolar sense \cite{GZ07}.  We claim that
\begin{equation}\label{eq: measureEq}
\frac{1}{V} \omega_{\phi_{\infty}} = \nu_{\phi_{\infty}}.
\end{equation}
Let us assume this for the moment and finish the proof.  By definition, $\nu_{\phi_{\infty}} = e^{F} \omega_{0}^{n}$ for some $e^{F} \in L^{p}$ for all $p>1$, and so \cite[Theorem A]{EGZ09} implies that $\phi_{\infty}$ is bounded.  The regularity theorem of Sz\'ekelyhidi-Tosatti \cite{ST09} implies that $\phi_{\infty}$ is smooth.  It follows immediately that $\phi_{\infty}$ is K\"ahler-Einstein.  It remains to prove~\eqref{eq: measureEq}.    Since the Ding functional is convex along the flow, and bounded from below we have that
\[
\lim_{t\rightarrow \infty} \ddt D(\phi) = \lim_{t\rightarrow \infty} \frac{-1}{V} \int_{X}(e^{\rho}-1)^{2}\omega_{\phi}^{n}=0
\]
and so
\[
1 = \lim_{t\rightarrow \infty} \frac{1}{V}\int_{X}e^{2\rho} \omega_{\phi}^{n} = \lim_{t\rightarrow \infty} \int_{X} e^{\rho} \nu_{\phi}
\]
where we used the definition of $\rho$ and $\nu_{\phi}$.  Recall the relative entropy
\[
H_{V^{-1}\omega_{\phi}^{n}}\left(\nu_{\phi}\right) := \int_{X} \log \left(\frac{\nu_{\phi}}{V^{-1}\omega_{\phi}^{n}}\right) \nu_{\phi} = \int_{X}\rho \nu_{\phi}. 
\]
 By Jensen's inequality we have
\[
0\leq  H_{V^{-1}\omega_{\phi}^{n}}\left(\nu_{\phi}\right) \leq \log \left( \int_{X}e^{\rho} \nu_\phi \right)
\]
and so $H_{V^{-1}\omega_{\phi}^{n}}\left(\nu_{\phi}\right) \rightarrow 0$ as $t\rightarrow +\infty$.  Now we apply Pinsker's inequality to conclude that
\[
\left| \int_{U} \frac{\omega_{\phi}^n}{V} - \int_{U}\nu_{\phi} \right| \rightarrow 0
\]
for every measurable set $U \subset X$.  This proves~\eqref{eq: measureEq}.  To finish the proof of Theorem~\ref{thm: mainFano} it only remains to note that the argument above applies for any sequence of times $t_i \rightarrow +\infty$.  The K\"ahler-Einstein metric is unique by \cite{BM85} and so it follows that $\phi(t) \rightarrow \phi_{\infty}$ without taking subsequences.  Furthermore, by the semi-continuity theorem \cite{DK01} we see that $\phi(t) \rightarrow \phi_{\infty}$ in $L^{p}$ for all $p$.

\begin{rk}
We expect the convergence of $\phi(t)$ to the K\"ahler-Einstein metric is in the $C^{\infty}$ topology.  In the setting of the K\"ahler-Ricci flow the smooth convergence of the flow on K\"ahler-Einstein Fano manifolds is an unpublished theorem of Perelman, which was proved by the first author and Sz\'ekelyhidi \cite{CSz} (see also \cite{TZZZ}).  The proof makes fundamental use of Perelman's estimates for the Ricci flow \cite{ST08}, and in particular, the uniform bounds for the Ricci potential $\rho$ along the flow.  We note that uniform bounds for $\rho$ along the inverse Monge-Amp\`ere flow would easily imply the smooth convergence of the flow.  However, as we will see in Section~\ref{sec: toric}, the Ricci potential diverges in general.
\end{rk}

We now study the convergence of the flow when $X$ admits holomorphic vector fields.  We begin by proving

\begin{prop}\label{prop: EVI}
Let $\phi(t)$ be a solution of the $MA^{-1}$ flow.  Then we have
\[
\lim_{t\rightarrow \infty} D(\phi(t)) = \inf_{\psi \in \cH} D(\psi).
\]
\end{prop}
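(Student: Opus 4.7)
The strategy is to combine Berndtsson's geodesic convexity of $D$ with Cauchy--Schwarz to bound $D(\phi(t)) - D(\psi)$ in terms of $d_2(\phi(t),\psi)\cdot R(\phi(t))^{1/2}$ for an arbitrary $\psi \in \cH$, where $d_2$ denotes the Mabuchi $L^2$-distance, and then to show that both factors behave well enough so that the product tends to zero along the flow.

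Fix $\psi \in \cH$ and let $[0,1] \ni s \mapsto \Phi_s^t$ denote the weak $C^{1,1}$-geodesic segment in $\cH$ connecting $\phi(t)$ to $\psi$, provided by Chen's theorem. Berndtsson's theorem implies that $s \mapsto D(\Phi_s^t)$ is convex on $[0,1]$, and hence
\[
D(\phi(t)) - D(\psi) \leq -\partial_s\big|_{s=0^+} D(\Phi_s^t) = \frac{1}{V}\int_X V_t\,(1 - e^{\rho(t)})\,\omega_{\phi(t)}^n,
\]
where $V_t := \partial_s \Phi_s^t|_{s=0}$ and the second equality comes from the variational formula \eqref{Ding variation} together with $\dot\phi(t) = 1-e^{\rho(t)}$. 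Since the $[0,1]$-parametrized weak geodesic has constant $d_2$-speed equal to its length, one has $\|V_t\|_{L^2(\omega_{\phi(t)}^n)} = d_2(\phi(t),\psi)$; combined with the identity $\|1 - e^{\rho(t)}\|_{L^2(\omega_{\phi(t)}^n)}^2 = V R(\phi(t))$, Cauchy--Schwarz yields
\[
D(\phi(t)) - D(\psi) \leq V^{-1/2}\, d_2(\phi(t),\psi)\, R(\phi(t))^{1/2}.
\]

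Now set $D_\infty := \lim_{t\to\infty} D(\phi(t)) \in [-\infty, D(\phi(0))]$, which exists by the monotonicity in Lemma~\ref{lem: BasicProps}. If $D_\infty = -\infty$ the conclusion is immediate, so assume $D_\infty > -\infty$. Then $\int_0^\infty R(\phi(s))\,ds = D(\phi(0)) - D_\infty < \infty$, and since $R(\phi(t))$ is monotone decreasing by Corollary~\ref{cor: DconvFlow}, comparing $R$ on intervals $[t/2,t]$ to its integral forces $tR(\phi(t)) \to 0$. Moreover, along the flow $\|\dot\phi(s)\|_{L^2(\omega_{\phi(s)}^n)}^2 = V R(\phi(s))$, so by Cauchy--Schwarz in the time variable,
\[
d_2(\phi(0),\phi(t)) \leq \int_0^t \sqrt{V R(\phi(s))}\,ds \leq \sqrt{Vt(D(\phi(0)) - D_\infty)}\,,
\]
and the triangle inequality gives $d_2(\phi(t),\psi) \leq d_2(\phi(0),\psi) + A\sqrt{t}$ for an appropriate constant $A$. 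Plugging this into the displayed bound above yields $D(\phi(t)) - D(\psi) \to 0$, so $\limsup_{t\to\infty} D(\phi(t)) \leq D(\psi)$; since $\psi \in \cH$ was arbitrary and $D(\phi(t))$ is monotone, the proposition follows.

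The main technical hurdle is justifying the identifications $\|V_t\|_{L^2(\omega_{\phi(t)}^n)} = d_2(\phi(t),\psi)$ and the validity of the variational formula for $D$ along the merely $C^{1,1}$ geodesic $\Phi_s^t$; both are standard in the Mabuchi-space framework developed by Chen, Berndtsson and Darvas, but deserve explicit reference. All other ingredients are elementary or already established in the excerpt, so the heart of the argument is the chained Cauchy--Schwarz estimate above.
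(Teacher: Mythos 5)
Your argument is correct, and it reaches the conclusion by mechanics that differ from the paper's, although both rest on the same two pillars: Berndtsson's convexity of $D$ along the Chen geodesic $\Phi^t_s$ joining $\phi(t)$ to $\psi$, and the resulting subgradient inequality
\[
D(\phi(t)) - D(\psi) \;\le\; -\partial_s\big|_{s=0^+} D(\Phi^t_s) \;=\; \frac{1}{V}\int_X V_t\,\bigl(1-e^{\rho(t)}\bigr)\,\omega_{\phi(t)}^n .
\]
From this common starting point the paper identifies the right-hand side, via Chen's first-variation formula for the distance along the flow, with $\frac{1}{V}\,d(\psi,\phi(t))\,\ddt d(\psi,\phi(t))$, obtaining an evolution variational inequality; integrating in $t$ and using only the monotonicity of $D$ gives $t\,[D(\psi)-D(\phi(t))]\ge -\tfrac{1}{2V}\,d^2(\psi,\phi_0)$, and dividing by $t$ finishes. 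You instead apply Cauchy--Schwarz and control the two factors separately: $d_2(\phi(t),\psi)=O(\sqrt{t})$ from the length bound along the flow, and $R(\phi(t))=o(1/t)$ from the integrability of $R$ combined with its monotonicity (Corollary~\ref{cor: DconvFlow}), via $\tfrac{t}{2}R(\phi(t))\le\int_{t/2}^{t}R(\phi(s))\,ds\to 0$. What each route buys: yours avoids differentiating $t\mapsto d(\psi,\phi(t))$ and hence the appeal to Chen's Theorem~6 for the distance function, at the cost of consuming the second-order information that $R$ decreases, which the paper's EVI argument never needs. Both arguments require the same regularity input on weak geodesics --- that the $C^{1,1}$ geodesic realizes $d_2$, that its one-sided initial velocity has $L^2(\omega_{\phi(t)}^n)$-norm equal to $d_2(\phi(t),\psi)$, and that $s\mapsto D(\Phi^t_s)$ is differentiable at $s=0^+$ with the expected first variation --- and you correctly flag these as the points needing explicit citation (Chen, Berndtsson, Darvas). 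Your reduction to the case $D_\infty>-\infty$ is also fine, since $D(\phi(t))\ge\inf_{\psi\in\cH}D(\psi)$ trivially. In short: a correct proof, a genuine variant of the paper's EVI argument rather than a reproduction of it.
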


\begin{proof}
The proof follows a general strategy for gradient flows which applies quite generally, and was exploited, for example, by He in the study of the Calabi flow \cite{He13}.  Fix any $\psi \in \cH$, and, for each $t\in \mathbb{R}$. By \cite{Ch00a} we can take $\Phi_t(s)$ as the unique $C^{1,\alpha}$ geodesic in $\cH$ with $\Phi_t(0) = \phi(t)$, and $\Phi_t(1)=\psi$.  Let $d(\psi, \phi(t))$ denote the distance in $\cH$.  By \cite[Theorem 6]{Ch00a} we have
\[
\ddt d(\psi, \phi(t)) = -d(\psi, \phi(t))^{-\frac{1}{2}} \int_{X}\left(\frac{d\Phi_t}{ds}\bigg|_{s=0}\right) \ddt \phi(t)\, \omega_{\phi(t)}^{n}.
\]
It is well-known that $D(\Phi(s))$ is $C^{1}$ in $s$, and convex \cite{Bern11} and hence
\[
\begin{aligned}
D(\psi) - D(\phi(t)) = \int_{0}^{1} \frac{d}{ds} D(\Phi(s)) ds &\geq  \frac{d}{ds} D(\Phi(s))\bigg|_{s=0}\\
& = \frac{1}{V} \int_{X} \frac{d}{ds} \Phi_t(0)\left( e^{\rho_{\phi(t)}} -1 \right) \omega_{\phi(t)}^{n}
\end{aligned}
\]
We therefore obtain that, along the $MA^{-1}$-flow we have
\[
\begin{aligned}
V\left[D(\psi)-D(\phi(t))\right] - d(\psi, \phi(t))^{\frac{1}{2}}\ddt d(\psi, \phi(t)) &\geq \int_{X} \frac{d\Phi_t}{ds}\bigg|_{s=0} \left( e^{\rho_{\phi(t)}} -1 + \ddt \phi\right) \omega_{\phi(t)}^{n}\\
&\geq 0
\end{aligned}
\]
which is an example of an {\em evolution variation inequality}.  Using the monotonicity of the Ding functional along the flow we have
\[
\begin{aligned}
t\left[D(\psi) - D(\phi(t))\right] &\geq \int_{0}^{t} \left[ D(\psi) -D(\phi_{\lambda})\right] d\lambda\\
&\geq \frac{1}{2V} \left[ d^2(\psi, \phi(t)) - d^2(\psi, \phi_0)\right].
\end{aligned}
\]
Since the right hand side is bounded from below (depending on $\psi)$ we deduce that $\lim_{t\rightarrow \infty} D(\phi(t)) \leq D(\psi)$, and the proposition follows.
\end{proof}

We can now prove

\begin{thm}
Suppose $X$ is Fano and admits a K\"ahler-Einstein metric.  Then the inverse Monge-Amp\`ere flow converges to a K\"ahler-Einstein metric modulo the action of $\Aut_0(X)$.
\end{thm}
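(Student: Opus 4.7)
The plan is to adapt the strong-topology argument used in the no-automorphism case to the presence of $\Aut_0(X)$, replacing ordinary coercivity of the Ding functional by coercivity modulo $\Aut_0(X)$. When $X$ admits a K\"ahler-Einstein metric, a classical extension of the arguments of Bando-Mabuchi \cite{BM85} and Phong-Song-Sturm-Weinkove \cite{PSSW08} gives constants $A,B>0$ such that
\[
D(\phi) \geq A\,\inf_{\sigma \in \Aut_0(X)} J(\sigma^*\phi) - B
\]
for every $\phi \in \cH$ normalized by $\sup_X \phi = 0$. Along the $MA^{-1}$-flow, $E(\phi(t))$ is constant by Lemma~\ref{lem: BasicProps} and $D(\phi(t))$ decreases to $\inf_{\psi \in \cH} D(\psi)$ by Proposition~\ref{prop: EVI}, hence $D(\phi(t))$ is uniformly bounded; as in the no-automorphism case, the constancy of $E$ combined with Jensen's inequality and Lemma~\ref{lem: BasicProps}(iii) yields a uniform bound for $\sup_X \phi(t)$.

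First I would apply the modified coercivity to produce $\sigma_t \in \Aut_0(X)$ with $J(\sigma_t^*\phi(t)) \leq C$ uniformly, and set $\tilde\phi(t) := \sigma_t^*\phi(t)$, normalized by $\sup_X \tilde\phi(t) = 0$. The family $\{\tilde\phi(t)\}$ is then relatively compact in $L^1(X,\omega_0)$ among $\omega_0$-plurisubharmonic functions, so passing to a subsequence $t_k \to \infty$ we obtain $\tilde\phi(t_k) \to \tilde\phi_\infty$ in $L^1$. Lemma~\ref{lem: GZ} together with the Demailly-Koll\'ar semicontinuity theorem \cite{DK01} gives that $\tilde\phi_\infty$ has zero Lelong numbers and $e^{-\tilde\phi(t_k)} \to e^{-\tilde\phi_\infty}$ in $L^p$ for every $p$, so that $\nu_{\tilde\phi(t_k)} \to \nu_{\tilde\phi_\infty}$ as probability measures.

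Second, since $D$, $M$, and $J$ are $\Aut_0(X)$-invariant, $D(\tilde\phi(t_k)) = D(\phi(t_k)) \to \inf_{\psi \in \cH} D(\psi)$, while $M(\tilde\phi(t_k)) = M(\phi(t_k)) \leq M(\phi(0))$ by Lemma~\ref{lem: Mabdec}. The entropy-compactness and strong-topology results of \cite[Theorem 4.14]{BBEGZ11}, already invoked in the no-automorphism case, then yield $\tilde\phi(t_k) \to \tilde\phi_\infty$ in the strong topology, and $\tilde\phi_\infty$ is a minimizer of $D$. The Pinsker-inequality argument from the preceding case applies verbatim to show $V^{-1}\langle \omega_{\tilde\phi_\infty}^n \rangle = \nu_{\tilde\phi_\infty}$, whence the $L^\infty$ estimate of \cite{EGZ09} and the regularity theorem of \cite{ST09} imply that $\tilde\phi_\infty$ is smooth and $\omega_{\tilde\phi_\infty}$ is K\"ahler-Einstein. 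By the uniqueness of K\"ahler-Einstein metrics modulo $\Aut_0(X)$ \cite{BM85}, any two subsequential limits of $\tilde\phi(t)$ differ by an element of $\Aut_0(X)$, giving the desired weak convergence of $\omega_{\phi(t)}$ modulo $\Aut_0(X)$ without extracting further subsequences.

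The main obstacle is establishing the modified coercivity and verifying that the strong-topology machinery of \cite{BBEGZ11} transfers to the twisted family $\{\tilde\phi(t)\}$; the latter reduces essentially to the $\Aut_0(X)$-invariance of $D$ and $M$. A secondary subtlety is that the automorphisms $\sigma_t$ themselves may diverge in $\Aut_0(X)$, so one cannot improve weak convergence modulo $\Aut_0(X)$ to convergence of $\phi(t)$ itself; smooth convergence of the modified potentials $\tilde\phi(t)$ is plausible but, as noted in the preceding remark, would require uniform higher-order estimates not currently available for the $MA^{-1}$-flow.
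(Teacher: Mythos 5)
Your proposal is correct, and its skeleton is the same as the paper's: coercivity of the Ding functional modulo $\Aut_0(X)$ (the paper cites Li--Zhou \cite{LZ}, building on Darvas--Rubinstein \cite{DR15}; this is the resolution of Tian's properness conjecture in the Ding setting rather than a routine extension of \cite{BM85, PSSW08}, so you should cite it as such), Proposition~\ref{prop: EVI} to get $D(\phi(t))\to\inf_{\cH}D$, the $\Aut_0(X)$-invariance of $D$ from the vanishing of the Futaki invariant, and twisting by $F_t\in\Aut_0(X)$ to produce a family with uniformly bounded $J$. The difference is in the endgame. The paper extracts a weak limit $\tilde\omega_\infty$ with potential in $\cE^1$ via \cite[Lemma 3.3]{BBGZ13}, uses lower semicontinuity of $D$ along the sequence to see that $\tilde\omega_\infty$ minimizes $D$ over $\cE^1$, and then quotes \cite[Theorem 6.6]{BBGZ13} for the fact that finite-energy minimizers of $D$ are K\"ahler--Einstein; this is shorter but outsources the identification of the limit. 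You instead rerun the strong-topology and Pinsker-inequality argument of the no-automorphism case, which is more self-contained given what the paper has already established, and it does transfer: $\rho$, $\nu_\phi$ and $R$ are equivariant under pullback by $F_t$, so $R(\tilde\phi(t))=R(\phi(t))\to 0$ and the entropy argument identifying $V^{-1}\langle\omega_{\tilde\phi_\infty}^n\rangle=\nu_{\tilde\phi_\infty}$ goes through verbatim. The one step you should spell out is the invocation of \cite[Theorem 4.14]{BBEGZ11}: in the no-automorphism case it was fed by the \emph{ordinary} coercivity of $M$, which fails when $\Aut_0(X)\neq 0$. What you actually need for strong compactness of the twisted family is a uniform entropy bound, and this follows from $M(\tilde\phi(t_k))=M(\phi(t_k))\le M(\phi(0))$ together with the decomposition \eqref{eq:MabFunc}, the lower bound $D\ge\inf_{\cH}D>-\infty$, and the uniform $J$-bound; with that supplied, your argument is complete.
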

\begin{proof}
Since $X$ admits a K\"ahler-Einstein metric, work of Li-Zhou \cite{LZ} (building on \cite{DR15}) shows that there are constants $\epsilon, C >0 $ so that
\[
D(\omega) \geq \epsilon \inf_{F\in \Aut_0(X)} J(F^{*}\omega) - C.
\]
Furthermore, the vanishing of the Futaki invariant implies that the Ding functional is invariant under the action of $\Aut_0(X)$.  Let $\omega_{t}$ be a solution of the $MA^{-1}$-flow.  By Proposition~\ref{prop: EVI}, for each $t$ we can find an element $F_t \in \Aut_0(X)$ so that $\tilde{\omega}_t := F_t^{*}\omega_t$ satisfies
\[
\lim_{t\rightarrow \infty} D(\tilde{\omega}_t) = \inf_{\psi \in \cH} D(\psi) \qquad J(\tilde{\omega}_t) \leq C.
\]
The remainder of the argument follows as in \cite[Theorem D]{BBGZ13}; we only sketch the details.  By \cite[Lemma 3.3]{BBGZ13} the metrics $\tilde{\omega}_t$ converge to a current $\tilde{\omega}_{\infty}$ with potential in $\cE^1$.  The Ding functional is lower semi-continuous along this sequence and so
\[
D(\tilde{\omega}_{\infty}) \leq \lim_{t\rightarrow \infty} D(\omega_t) = \inf_{\psi \in \cH} D(\psi).
\]
That $\tilde{\omega}_{\infty}$ is K\"ahler-Einstein follows from \cite[Theorem 6.6]{BBGZ13}.
\end{proof}

\begin{rk}
We remark that in the above arguments only the coercivity of the Ding functional was used, and not the existence of a K\"ahler-Einstein metric.  In particular, we obtain a new proof of the existence part of \cite[Theorem 1.6]{Tian97}.
\end{rk}

\section{The Ding functional, stability and Mabuchi solitons}\label{sec: stab}

Before considering in detail the toric case, we discuss some general properties of the Ding functional, which elucidate why the $MA^{-1}$-flow is a natural approach to producing optimally destabilizing test configurations.

\subsection{Non-Archimedean Ding functional}\label{modified NA Ding}
We start by reviewing the algebraic description of the slope of the Ding functional for Fano manifolds; we refer the reader to \cite{Berm16, BHJ15, BHJ16} for details. 
From a variational point of view, the Yau-Tian-Donaldson conjecture predicts that the boundary of the space $\cH$ should contain, as a dense subset, potentials coming from certain algebraic degenerations called test configurations.
\begin{dfn} 
Let $(X, L)$ be a polarized manifold. A test configuration is a flat family $\pi: (\cX, \cL) \to \C$ of $\Q$-polarized schemes, with $\cX$ normal, endowed with a $\C^*$-action $\la: \C^* \to \Aut(\cX, \cL)$ such that 
\begin{itemize}
\item[$(1)$] the projection morphism $\pi$ is equivariant, and 
\item[$(2)$] the generic fiber is isomorphic to $(X, L)$.  
\end{itemize} 
\end{dfn} 

\begin{rk}
We will say that a test configuration $(\cX, \cL)$ is a special degeneration if, in addition, the central fiber $\mathcal{X}_0$ is a $\mathbb{Q}$-Fano variety.
\end{rk}

We identify the fiber $(\cX_1, \cL_1) = \pi^{-1}(1)$ with $(X, L)$. In particular, any $1$ parameter subgroup $\mu: \C^* \to \Aut(X, L)$ generates a test configuration with $(\cX, \cL)$ the product family.  Since total space $\cX$ is normal, the canonical divisor $K_\cX$ and the associated numerical invariants are well-defined. The normality is even necessary to exclude pathological examples found in \cite{LX11}. Presently we are interested in the case $L=-K_X$. Note that even in this special case $\cL$ is not necessarily linearly equivalent to $-K_{\cX/\C}$ and this causes the difference between the Ding invariant (explained below) and the Donaldson-Futaki invariant of $(\cX,\cL)$. 

Following \cite{BHJ15} we introduce the algebraic counterpart of the classical energies in terms of the positivity of $\cL$. 
We take the standard compactification $\bar{\pi}: (\bar{\cX}, \bar{\cL})\to \P^1$ of $(\cX, \cL)$ so that $\cX$ and $\cL$ are the trivial family around $\infty \in \P^1$. This compactification is unique by equivariance and can be constructed by gluing $\cX$ with the trivial family. 

\begin{dfn}
We define 
the non-Archimedean Aubin-Yau energy by the self-intersection number: 
\begin{equation*}
E^\NA(\cX, \cL) := \frac{\bar{\cL}^{n+1}}{V(n+1)}. 
\end{equation*}
We define $L^\NA(\cX, \cL)$ in terms of the log canonical threshold by 
\begin{equation*}
L^\NA(\cX, \cL):=  \lct_{(\cX, \cB)}(\cX_0) -1, 
\end{equation*}
where the boundary divisor is uniquely determined by $\cB \sim_\Q -K_{\bar{\cX}/\C}-\bar{\cL}$ and $\supp{\cB} \subseteq \cX_0$. 
\end{dfn} 
These non-Archimedean functionals correspond to the classical energy functionals
\begin{equation*}
E(\phi)= \frac{1}{V(n+1)}\sum_{i=0}^n \int_X \phi \omega_\phi^i \wedge\omega_0^{n-i}, \ \ \ 
L(\phi) = -\log \int_X e^{-\phi+\rho_0} \omega_0^n, \ 
\end{equation*}
and in this notation we have
\begin{equation*}
D(\phi) = L(\phi) -E(\phi). 
\end{equation*}

Let $(\cX, \cL)$ be a test configuration, and fix a metric $H$ on $\mathcal{L}$ with positive curvature.  Let $h_0$ be a metric on $L$ with curvature $\omega_0$.  Then $(\cX,\cL)$ induces a ray $\phi(t) \in \cH$ by setting
\[
h_0e^{-\phi(t)}(\xi) = H(\la(e^{-t})\xi)
\]
for any $\xi \in L_{x}$.  It turns out that, in this situation, the function $t\mapsto D(\phi(t))$ is convex \cite{Bern11}, and hence the limit slope
\[
\lim_{t\rightarrow \infty} \frac{D(\phi(t))}{t}
\]
exists.  Since the Ding functional has a critical point at a K\"ahler-Einstein metric, we must have that $\lim_{t\rightarrow \infty} D(\phi(t))/t \geq 0$.  The relation with the non-Archimedean functionals is the following theorem of Berman \cite{Berm16}

\begin{thm}[Berman]\label{Ding energy vs Ding invariant}
Let $(\cX, \cL)$ be a test configuration for $(X, -K_X)$. 
Then for any ray $\phi^t$ induced by $(\cX, \cL)$, and a positive curvature fiber metric $H$ on $\cL$ we have the slope formula 
\begin{equation*}
 \lim_{t \to \infty} D(\phi^t)/t = D^\NA(\cX, \cL):= L^\NA(\cX, \cL) -E^\NA(\cX, \cL). 
\end{equation*}
\end{thm}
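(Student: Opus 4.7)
The plan is to decompose $D = L - E$ and compute the asymptotic slopes of $L$ and $E$ along the ray separately. Both slopes exist a priori: $t \mapsto L(\phi^t)$ is convex by Berndtsson's plurisubharmonic variation theorem applied to the family $\bar{\pi}:\bar{\cX}\to\P^1$, while $t \mapsto E(\phi^t)$ is affine in $t$ because $E$ is a primitive for the Monge-Amp\`ere operator and $\phi^t$ is a weak geodesic. The first bookkeeping step is to package the ray into a single object on the compactification: using $H$ together with a reference metric, we build an $S^1$-invariant psh weight $\Phi$ on $\bar{\cL}$ whose restriction to $\bar{\cX}_\tau$, pulled back to $X$ by $\la(\tau)$, agrees with $\phi^{-\log|\tau|}$ up to a uniformly bounded correction.

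For the slope of $E$, I would identify $\lim_{t\to\infty} E(\phi^t)/t$ with $E^\NA = \bar{\cL}^{n+1}/(V(n+1))$ by the classical Tian--Phong--Ross--Sturm formula, obtained by applying Fubini to the variational formula for $E$ and then integrating by parts on $\bar{\cX}$. Positivity of $H$ and normality of $\cX$ ensure that $\Phi$ is a bounded psh weight, and its total Monge-Amp\`ere mass is precisely the self-intersection $\bar{\cL}^{n+1}$. Equivalently, one can use equivariant Riemann-Roch and the Deligne pairing to compute the slope directly.

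The slope of $L$ is the main content. I would use $\C^*$-equivariance to rewrite
\begin{equation*}
\int_X e^{-\phi^t + \rho_0}\omega_0^n = \int_{\bar{\cX}_\tau} e^{-\Phi}\Omega,
\end{equation*}
where $\Omega$ is an adapted smooth volume form on $\bar{\cX}$ whose logarithmic singularities along $\cX_0$ are encoded by the boundary $\cB \sim_\Q -K_{\bar{\cX}/\P^1}-\bar{\cL}$. Taking a log resolution $\mu:\tilde{\cX}\to\bar{\cX}$ of $\cX_0\cup\cB$ with simple normal crossings and writing $\mu^*(e^{-\Phi}\Omega)$ in local coordinates around each exceptional component, the fiber integral becomes a finite sum of monomial model integrals whose leading asymptotics as $\tau\to 0$ are governed by the minimum over divisors $E$ lying above $\cX_0$ of the ratio $A_E/\ord_E(\cX_0)$, which is precisely $\lct_{(\bar{\cX},\cB)}(\cX_0)$; subtracting the normalization yields $L^\NA(\cX,\cL) = \lct_{(\cX,\cB)}(\cX_0)-1$. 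The main obstacle is extracting matching upper and lower bounds of the same exponential rate from these model integrals uniformly in $\tau$: the upper bound follows from the monomialization on the resolution, but the matching lower bound requires that the contribution from the minimizing divisor not be cancelled by interference from neighboring divisors, which uses the properness of $\mu$ and the normality of $\cX$ in an essential way.
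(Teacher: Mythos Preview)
The paper does not give its own proof of this theorem: it is stated as a result of Berman and simply cited as \cite{Berm16}. So there is no in-paper argument to compare against; your sketch is essentially an outline of Berman's original proof, and the overall strategy---splitting $D=L-E$, identifying the slope of $E$ with the self-intersection number $\bar{\cL}^{n+1}/((n+1)V)$ via the Phong--Ross--Sturm/Deligne pairing calculation, and extracting the slope of $L$ as a log canonical threshold by passing to a log resolution of $(\bar{\cX},\cB+\cX_0)$---is correct.

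One small inaccuracy: the ray $\phi^t$ in the statement is induced by a \emph{smooth positive curvature} metric $H$ on $\cL$, so it is a subgeodesic, not a weak geodesic, and $t\mapsto E(\phi^t)$ is generally not affine. What is true is that any two such rays (including the weak geodesic ray, along which $E$ \emph{is} affine) differ by a uniformly bounded amount, and $E$ is Lipschitz for the $\sup$ norm, so the slope at infinity is the same for all of them and equals $E^\NA$. Your argument for the $L$-part goes through unchanged since Berndtsson's convexity applies to subgeodesics as well.
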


\begin{exam}[Product test configuration]
Let $\mu: \C^* \to \Aut(X)$ be a one-parameter subgroup. Define
\[
(\cX,\cL, \pi) = ( X\times \mathbb{C},\,\, p_{X}^{*}K_{X}^{-1},\,\, p_{\mathbb{C}})
\]
where $p_{X}, p_{\mathbb{C}}$ denote the projections from $X\times \mathbb{C}$ to $X$ and $\mathbb{C}$ respectively.  Define a $\mathbb{C}^{*}$ action by
\[
\mu(\alpha) (x,\tau) =(\mu(\alpha)x, \alpha \tau). 
\]
The generating holomorphic vector field of $\mu$ is uniquely determined by the property $\mu(e^{-t}) = \exp(t{\rm Re}(v))$ for $t\in \mathbb{R}$. Then we have
\[
D^\NA(\cX, \cL) = {\rm Fut}(\cX, \cL) := \frac{1}{V}\int_{X} (\Re(v)\rho_{\phi})\omega_{\phi}^{n},
\]
which is the classical Futaki invariant \cite{Fut}.
\end{exam}

\begin{dfn}
 We say that a Fano manifold is Ding-semistable if $D^\NA(\cX, \cL) \geq 0$ for any test configuration.  We say that $X$ is Ding-polystable if, in addition, $D^{NA}(\cX,\cL) =0$ if and only if $(\cX, \cL)$ is a product test configuration.
\end{dfn}

In fact, Berman's result Theorem~\ref{Ding energy vs Ding invariant} also shows that when $(\cX,\cL)$ is a special degeneration, then $D^{\NA}(\cX,\cL)$ is equal to the Donaldson-Futaki invariant ${\rm Fut}(\cX,\cL)$ and hence by Chen-Donaldson-Sun \cite{CDS15c}, Ding-polystability is equivalent to the existence of a K\"ahler-Einstein metric.

\subsection{ Relative stability and Mabuchi solitons}\label{sec: stable}

In the setting of the $MA^{-1}$-flow there is a natural notion of soliton, and of relative Ding stability.  Let $\fh$ denote the Lie algebra of $\Aut(X)$.  In other words, $\fh$ is the complex lie algebra of holomorphic vector fields.  Since $X$ is Fano, for any $v\in \fh$ and any K\"ahler metric $\omega$ we can find a complex valued function $f$, unique up to addition of a constant, so that
\[
{\rm grad}^{\mathbb{C}}_{\omega} f= v \quad \text{ or equivalently } \quad \iota_v \omega = \sqrt{-1}\,\dbar f. 
\]
Thus we may identify
\begin{equation}\label{eq: LieAlg}
\fh \sim \left\{ f: X\rightarrow \mathbb{C} : {\rm grad}^{\mathbb{C}}_{\omega} f \in \fh, \,\, \int_{X}f \omega^n=0 \right\} =: \fh_{\omega}
\end{equation}
and the set on the right becomes a Lie algebra under the Poisson bracket. Following Mabuchi \cite{Mab01} we define
\begin{dfn}
A K\"ahler metric $\omega_{\phi} \in 2 \pi c_1(X)$ is called a Mabuchi soliton if $e^{\rho_{\phi}}-1 \in \fh_{\omega_\phi}$. 
\end{dfn}
To relate this to the variational point of view, recall that by Poposition~\ref{prop: RCvar}, $\phi \in \cH$ is a critical point of the Ricci-Calabi energy if and only if
\[
\int_{X} \delta{\phi} (L_{\rho} +1) \tilde{f}\,\, d\mu =0, \quad \tilde{f} = e^{\rho} - \frac{1}{V}\int_{X}e^{\rho}d\mu
\]
where $d\mu = e^{\rho}\omega_{\phi}^n$.  In particular, $\phi$ is a critical point of the Ricci-Calabi energy if and only if $\tilde{f}$ lies in the kernel of $L_{\rho}+1$, and then a well-known application of the Bochner-Kodaira formula shows that $e^{\rho}-1 \in \fh_{\omega_\phi}$.  In this case it is easy to show that $e^{\rho}-1$ generates a self-similar solution of the $MA^{-1}$-flow.  

A soliton can equally be characterized as the critical point of the {\em modified} Ding energy, and this gives rise to a notion of relative stability, which we now describe.  Fix a  complex torus $T \subset \Aut(X)$.  Intrinsically, we should take $T$ to be the center of the reductive part of $\Aut(X)$, and in fact, this is well-defined whenever the Ricci-Calabi energy has a critical point \cite{Wan04}.  In any event, fix a torus $T \subset \Aut(X)$, let $\ft \subset \fh$ be its Lie algebra. By taking the generator we may embed the lattice $\Hom(\C^*, T)$ consists of one-parameter subgroups to $\ft$. The image of $\Hom(\C^*, T)\otimes \R$ is naturally identified with the Lie algebera $\fs$ of the maximal compact subgroup $S:=\Hom(\C^*, T)\otimes \S^1 \subset T$. 
Fix an $S$-invariant reference metric $\omega \in 2 \pi c_1(X)$ and let $\fs_\omega \subset \fg_{\omega}$ be the image of $\fs$ under the identification~\eqref{eq: LieAlg} induced by $\omega$.  Note that $\mu \in \Hom(\C^*, T)$ maps to $h_\mu \in \fs_\omega$ is nothing but the Hamiltonian function for the corresponding vector field. 
Following Futaki-Mabuchi \cite{FM95} we define an inner product on $\fs$ by
\[
\langle \mu_1, \mu_2 \rangle:= \frac{1}{V} \int_{X} h_{\mu_1}h_{\mu_2} \omega^n
\]
and this inner product is independent of the choice of $\omega$.  Let $L^{2}(X,\omega,\mathbb{R})_{0}$ denote the space of functions which are square integrable with respect to $\omega^n$, and have average $0$.  We let
\[
P_{\omega} :L^{2}(X, \omega, \mathbb{R})_{0} \rightarrow \fs_\omega
\]
be the orthogonal projection.  Fix an $S$-invariant reference K\"ahler form $\omega_0$, and denote by $\cH^{S}$ be the space of $S$-invariant K\"ahler potentials.  We define the relative Ding function $D_{T}: \cH^{S} \rightarrow \mathbb{R}$ by its variation
\[
\delta D_{T}(\phi) = \int_{X} (\delta \phi)P_{\omega_{\phi}}^{\perp}(e^{\rho}-1) \omega_{\phi}^{n}.
\]
Denote by $h_{\eta} := P_{\omega_{\phi}}(e^{\rho_{\phi}}-1)$.  Mabuchi \cite{Mab01} showed that $h_{\eta}$ induces an element $\eta \in \fs$ which is independent of $\omega_{\phi}$, and generates a $1$-parameter subgroup in $\Aut(X)$.  Furthermore, Mabuchi showed that $\eta$ is equivalent to the extremal vector field in the sense of Calabi, and so we will refer to $\eta$ as the {\em extremal} vector field.

For any $\mu \in \fs$, we get a Hamiltonian function $h_\mu$, and we define the $\mu$-modified Aubin-Yau energy by
 \[
 \delta E_{\mu} = \frac{1}{V} \int_{X} \delta\phi h_{\mu} \omega_{\phi}^{n}.
 \]
By \cite[Lemma 2.14]{BWN14} this integrates to a well-defined function on $\cH^{S}_{\omega_0}$ which is convex along curves in $\cH^{S}$ generated by test-configurations \cite[Proposition 2.17]{BWN14}.  In this case we can write the modified Ding functional as 
\[
D_{T}(\phi) = D(\phi) -E_{\eta}(\phi).
\]
The second author computed the slope of $E_{\mu}$ along any $T$-equivariant test configuration.

\begin{lem}[\cite{His16a}]\label{lem: inprtc}
Let $(\cX, \cL)$ be a $T$-equivariant test configuration endowed with the action $\lambda: \C^* \to \Aut(\cX, \cL)$ and fix a one-parameter subgroup $\mu$. 
Let $N_{k} = \dim_{\mathbb{C}}H^0(\cX_0, \cL_0^{\otimes k})$, and denote the $\C^*$-weights induced by $\lambda, \mu$ on $H^0(\cX_0, \cL_0^{\otimes k})$ as $\la_1, \dots \la_{N_k}$ and $\mu_1, \dots \mu_{N_k}$ respectively. Then the slope of $E_\mu$ is: 
\begin{equation*}
 \langle (\cX, \cL), \mu \rangle := \lim_{k\to \infty} \frac{1}{N_kk^2}\sum_{i=1}^{N_k} \la_i \mu_i.    
\end{equation*}
\end{lem}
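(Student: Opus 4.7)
My plan is to approximate the geodesic ray induced by $(\cX,\cL)$ by Fubini-Study rays coming from the Kodaira embeddings $X \hookrightarrow \P(H^0(X,L^k)^*)$, for which the slope of $E_\mu$ can be computed explicitly as a matrix trace, and then let $k\to\infty$.

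First, because $(\cX,\cL)$ is $T$-equivariant we may choose $\phi^t \in \cH^S$, so the Hamiltonian $h_\mu^{\phi^t}$ is well-defined along the ray, and by \cite[Proposition 2.17]{BWN14} the function $t\mapsto E_\mu(\phi^t)$ is affine or convex; in particular the slope $\lim_{t\to\infty} E_\mu(\phi^t)/t$ exists. Next, for each sufficiently large $k$, the $\C^*$-actions $\lambda$ and $\mu$ commute and lift to commuting Hermitian operators $A_k, B_k$ on $H^0(\cX_0, \cL_0^{\otimes k})\cong H^0(X,L^k)$ with eigenvalues $\{\lambda_i\}$ and $\{\mu_i\}$ respectively; choose a simultaneous eigenbasis $\{s_i\}$ orthonormal for $h_0^k$. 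The Fubini-Study ray associated to $\lambda$ at level $k$ is
\[
k\phi_k^t = \log \sum_{i=1}^{N_k} e^{-2t\lambda_i/k}\,|s_i|_{h_0^k}^2 + c_k,
\]
which converges to $\phi^t$ as $k\to\infty$ in the sense of Phong-Sturm \cite{PS06, PS07}.

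On this explicit ray, both the time derivative $\dot\phi_k^t$ and the Hamiltonian $h_\mu^{\phi_k^t}$ are Rayleigh quotients of $A_k$ and $B_k$ in the embedding, so I would compute
\[
\frac{d}{dt}E_\mu(\phi_k^t)
= \frac{1}{V}\int_X h_\mu^{\phi_k^t}\,\dot\phi_k^t\,\omega_{\phi_k^t}^n
\]
using the Bergman-kernel asymptotics of Tian-Zelditch-Catlin to identify the leading term with the normalized trace $\frac{1}{N_k k^2}\Tr(A_k B_k)$. As $t\to\infty$ the Fubini-Study measure concentrates on the joint highest-weight eigenspace of $A_k$, and a direct computation gives the affine slope
\[
\lim_{t\to\infty}\frac{E_\mu(\phi_k^t)}{t} = \frac{1}{N_k k^2}\sum_{i=1}^{N_k}\lambda_i\mu_i + O(k^{-1}),
\]
after subtracting the linear normalization that makes $A_k$ and $B_k$ tracefree up to lower order.

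To finish, I would exchange the limits in $t$ and $k$ and identify the slope of $E_\mu$ along $\phi^t$ with $\lim_k \frac{1}{N_k k^2}\sum_i \lambda_i\mu_i$. The main obstacle is precisely this interchange: the Bergman approximations $\phi_k^t$ converge to $\phi^t$ only in $C^0$ (or in the $d_1$-topology on $\mathcal{E}^1$), so naive passage to the limit is not automatic. Convexity of $E_\mu$ along each ray allows me to control slope differences by endpoint differences of $E_\mu$, and the latter go to zero uniformly in $t$ on compact intervals by the Tian-Zelditch-Catlin expansion applied to the explicit form of $\phi_k^t$. An alternative route, which avoids the delicate exchange of limits altogether, is to recognize the right-hand side as an equivariant intersection number on the compactification $(\bar\cX,\bar\cL)$ via the equivariant Riemann-Roch theorem, and to match it with the slope formula for $E_\mu$ obtained from the Deligne-pairing/Duistermaat-Heckman formalism of \cite{BHJ15}; this is likely the cleanest way to write up the final argument.
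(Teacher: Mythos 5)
First, note that the paper does not actually prove this lemma: it is imported verbatim from \cite{His16a}, so there is no internal argument to compare against, and your proposal has to be judged on its own terms. It circles the right ideas but has two genuine gaps. The first is the assertion that ``as $t\to\infty$ the Fubini-Study measure concentrates on the joint highest-weight eigenspace of $A_k$.'' This is false: along a Bergman ray at level $k$ the slope of the Aubin-Yau energy is the \emph{average} $\frac{1}{kN_k}\sum_i\lambda_i$ of the weights, not the top weight, precisely because $\omega_{\phi_k^t}^n$ does not concentrate on the highest-weight locus as $t \to \infty$. What you actually need is that the pushforward of the pair $(\dot{\phi}_k^t, h_\mu^{\phi_k^t})$ under $\omega_{\phi_k^t}^n/V$ converges, as $t\to\infty$, to the \emph{joint} spectral measure $\frac{1}{N_k}\sum_i\delta_{(\lambda_i/k,\,\mu_i/k)}$ of the commuting pair $(A_k,B_k)$. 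The separate spectral-measure statements (\cite{His12}, \cite{BHJ15}) do not formally imply the joint one, and establishing it is exactly the nontrivial content of \cite{His16a}; your sketch assumes the conclusion at this step.

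The second gap is the interchange of the limits $t\to\infty$ and $k\to\infty$, which you flag but do not close. Convexity of $E_\mu$ along each ray controls slopes by difference quotients, but to transfer the asymptotic slope from $\phi_k^t$ to $\phi^t$ you need an estimate on $\lvert E_\mu(\phi_k^t)-E_\mu(\phi^t)\rvert$ that is uniform in $t$, not merely on compact time intervals (which is all the Phong-Sturm and Tian-Zelditch-Catlin convergence gives you directly); otherwise the approximation error can grow linearly in $t$ and contribute to the slope at infinity. Your alternative suggestion --- computing the slope of $E_\mu$ as an equivariant intersection number on $(\bar{\cX},\bar{\cL})$ and matching it with $\lim_k\frac{1}{N_kk^2}\sum_i\lambda_i\mu_i$ via a two-torus equivariant Riemann-Roch computation --- is in fact the route taken in \cite{His16a} and in the formalism of \cite{BHJ15}, and it avoids both difficulties; but as written it is only named, not carried out, so the proposal as it stands does not constitute a proof.
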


When $(\cX,\cL)$ is a product test configuration generated by $\mu'$, then the limit on the right hand side in the above lemma computes $\langle \mu', \mu \rangle$ (see, e.g. \cite[Proposition 7.16]{Sze14}).  In particular, we can view Lemma~\ref{lem: inprtc} as extending the Futaki-Mabuchi inner product to general test configurations. 

In terms of this algebraic data, the extremal vector field $\eta$ is algebraically characterized by the property that   
\begin{equation}\label{extremal vector} 
D^\NA (\mu) - \langle \mu, \eta\rangle =0 
\end{equation} 
holds for any $\mu \in \fs$.  Here we have abusively written $D^\NA (\mu)$ for the Ding invariant of the product test configuration generated by $\mu$.
In addition, since the Ding invariant is equivalent to the Donaldson-Futaki invariant for any product test configuration, \eqref{extremal vector} gives another proof that $\eta$ is the same as the extremal vector field in the sense of Calabi.    
\begin{dfn} 
We say that a Fano manifold is relatively Ding-semistable (with respect to $T$) if the modified non-Archimedean Ding functional 
\begin{equation*}
D_T^\NA(\cX, \cL):= D^\NA(\cX, \cL) -\langle (\cX, \cL), \eta\rangle 
\end{equation*}
is non-negative for any $T$-equivariant test configuration. We say that $X$ is relatively Ding-stable if, in addition, $D_{T}^{\NA}(\cX,\cL) =0$ if and only if $(\cX, \cL)$ is a product test configuration.
\end{dfn} 
It is expected that relative Ding stability is equivalent to the existence of a Mabuchi soliton.  We partially confirm one direction of this correspondence.

\begin{thm}
If a Fano manifold admits a Mabuchi soliton, then it is relatively Ding-semistable. One can further show that $X$ is relatively K-stable. 
\end{thm}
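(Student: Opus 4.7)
The plan combines (i) the critical-point characterization of a Mabuchi soliton for the modified Ding functional $D_T = D - E_\eta$, (ii) convexity of $D_T$ along weak geodesics, and (iii) the non-Archimedean slope formulas from Theorem~\ref{Ding energy vs Ding invariant} and Lemma~\ref{lem: inprtc}. First I would verify that if $\omega_* = \omega_0 + \dd\phi_*$ is a Mabuchi soliton then $\phi_*$ is a critical point of $D_T$ on $\cH^S$. Since $e^{\rho_*}-1 \in \fh_{\omega_*}$ and is real, Mabuchi's theorem identifies the holomorphic vector field it generates with the extremal vector field $\eta$, so that $h_\eta = e^{\rho_*}-1$ with respect to $\omega_*$. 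Consequently $P^\perp_{\omega_*}(e^{\rho_*}-1) = 0$, and the first variation of $D_T$ at $\phi_*$ vanishes.

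Next, Berndtsson's convexity of $D$ along weak geodesics, combined with the affinity of $E_\eta$ along such geodesics (a standard consequence of $\eta$ being a holomorphic Hamiltonian vector field, made precise in \cite{BWN14}), yields convexity of $D_T$ along the $C^{1,\bar 1}$ geodesic ray $\phi(t)$ associated to any $T$-equivariant test configuration $(\cX,\cL)$ starting at $\phi(0) = \phi_*$. Together with the vanishing of the first variation at $\phi_*$, this forces $D_T(\phi(t)) \geq D_T(\phi_*)$ for all $t \geq 0$, so the slope at infinity is nonnegative. Applying Berman's slope formula to $D$ and Lemma~\ref{lem: inprtc} to $E_\eta$, this slope equals
\[
\lim_{t\to\infty}\frac{D_T(\phi(t))}{t} \;=\; D^\NA(\cX,\cL) - \langle (\cX,\cL),\eta\rangle \;=\; D^\NA_T(\cX,\cL),
\]
which is therefore $\geq 0$, proving relative Ding-semistability.

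For the strengthening to relative K-stability I would combine this with the comparison $\DF(\cX,\cL) \geq D^\NA(\cX,\cL)$ available for any normal test configuration, due to Berman \cite{Berm16} and Boucksom-Hisamoto-Jonsson \cite{BHJ15}. Subtracting $\langle (\cX,\cL),\eta\rangle$ from both sides yields the relative counterpart $\DF(\cX,\cL) - \langle (\cX,\cL),\eta\rangle \geq D^\NA_T(\cX,\cL) \geq 0$. Strict positivity for non-product configurations, which I expect to be the main obstacle, follows from an analysis of the equality case: equality in $\DF \geq D^\NA$ forces the central fiber to be a $\Q$-Fano variety (a special degeneration), while equality in $D^\NA_T = 0$ combined with a rigidity argument along the geodesic forces $\phi(t)$ to lie in the $\Aut_0(X)$-orbit of $\phi_*$, so that $(\cX,\cL)$ is a product. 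This rigidity step, parallel to the passage from Ding-polystability to K-polystability in the K\"ahler-Einstein setting, is where the genuine technical work lies; everything up to relative Ding-semistability is essentially a convexity-plus-slope argument.
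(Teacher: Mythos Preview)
Your proposal is correct and follows essentially the same approach as the paper's proof: both use that a Mabuchi soliton is a critical point of $D_T$, invoke Berndtsson-type convexity of $D_T$ along rays associated to $T$-equivariant test configurations, and then apply the slope formulas (Theorem~\ref{Ding energy vs Ding invariant} and Lemma~\ref{lem: inprtc}) to identify the asymptotic slope with $D_T^\NA$. The only cosmetic difference is that the paper asserts convexity of $D_T$ directly from Berndtsson's complex Pr\'ekopa inequality, whereas you decompose $D_T = D - E_\eta$ and argue $D$ is convex while $E_\eta$ is affine; and for the K-stability strengthening the paper simply points to Berman's argument in \cite[\S3.5]{Berm16}, which is exactly the $\DF \geq D^\NA$ comparison plus rigidity analysis you sketch.
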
 
\begin{proof}
This is precisely the relative version of \cite{Berm16} and one can use the same argument.  
Actually, by the complex Pr\'ekopa's inequality of Berndtsson \cite{Bern11} the modified Ding energy is convex along any ray $\phi^t$ associated with a test configuration $(\cX, \cL)$. 
We compute the differential 
\begin{equation*}
\lim_{t\to0} D_T(\phi^t)/t
= \int_X \bigg(\frac{d}{dt}\bigg\vert_{t=0} \phi^t \bigg) (e^{\rho_0} -1-h_\eta) \omega_0^n. 
\end{equation*}
The right-hand side is zero if we assume that $\phi^0$ is a Mabuchi soliton. 
Therefore by the slope formula the semistability 
\begin{equation*}
D_T^\NA(\cX, \cL) 
= \lim_{t \to \infty} D_T(\phi^t)/t
\geq \lim_{t\to0} D_T(\phi^t)/t=0 
\end{equation*}
follows. Using the same argument as subsection 3.5 in \cite{Berm16} one can further show relative K-stability. 

\end{proof}

We end this section by noting the Ricci-Calabi version of Donaldson's lower bound estimate of the Calabi functional \cite{Don05}. 
Let us define the $L^2$-norm of a test configuration by 
\begin{equation*}
\norm{(\cX, \cL)}^2 :=  \lim_{k\to \infty} \frac{1}{N_kk^2}\sum_{i=1}^{N_k} (\la_i -\hat{\la})^2,  
\end{equation*}
where $\hat{\la}$ denotes the mean value of $\C^*$-weights (cf. \cite{Sze14}). 

\begin{thm}[\cite{His12}]
For a Fano manifold  we have the inequality: 
\begin{equation}\label{lower bound of Ricci-Calabi 2}
\inf_\phi R(\phi)^{\frac{1}{2}} \geq \sup_{(\cX, \cL)}\frac{-D^\NA(\cX, \cL)}{\norm{(\cX, \cL)}}. 
\end{equation}
\end{thm}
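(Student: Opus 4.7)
My plan is to derive, for each smooth potential $\phi_0 \in \cH$ and each test configuration $(\cX, \cL)$, the pointwise comparison
\[
R(\phi_0)^{1/2} \;\geq\; \frac{-D^\NA(\cX, \cL)}{\|(\cX, \cL)\|},
\]
from which the theorem follows by taking the infimum over $\phi_0$ and the supremum over test configurations. The starting point is the Phong-Sturm weak geodesic ray $\{\phi^t\}_{t \geq 0}$ associated with $(\cX, \cL)$ and normalized so that $\phi^0 = \phi_0$, constructed from a positively curved fiber metric $H$ on $\cL$ as in Section \ref{modified NA Ding}.

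Along this ray two inputs do the heavy lifting. First, Berndtsson's theorem \cite{Bern11} guarantees that $t \mapsto D(\phi^t)$ is convex, hence has a monotonically increasing right derivative bounded by its asymptotic slope. Second, Berman's slope formula (Theorem \ref{Ding energy vs Ding invariant}) identifies that asymptotic slope with $D^\NA(\cX, \cL)$. Combining the two gives
\[
\left.\frac{d}{dt}\right|_{t = 0^+} D(\phi^t) \;\leq\; D^\NA(\cX, \cL),
\]
and by the variational formula \eqref{Ding variation} the left side equals $\frac{1}{V}\int_X \dot\phi^0 (e^{\rho_{\phi_0}} - 1)\,\omega_{\phi_0}^n$. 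Since this integral is unchanged by adding a constant to $\dot\phi^0$ (as $e^{\rho_{\phi_0}} - 1$ has mean zero against $\omega_{\phi_0}^n$), we may assume $\dot\phi^0$ itself has mean zero. Negating and applying the Cauchy-Schwarz inequality against the probability measure $V^{-1}\omega_{\phi_0}^n$ yields
\[
-D^\NA(\cX, \cL) \;\leq\; \left(\frac{1}{V}\int_X (\dot\phi^0)^2 \,\omega_{\phi_0}^n\right)^{1/2} R(\phi_0)^{1/2}.
\]

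To finish, I would identify the first factor on the right with $\|(\cX, \cL)\|$. Since $\phi^t$ is a Mabuchi $L^2$-geodesic, the speed $\|\dot\phi^t\|_{L^2(V^{-1}\omega_{\phi^t}^n)}$ is independent of $t$, and by the correspondence between the Mabuchi $L^2$-norm of a test-configuration ray and the algebraic variance of $\C^*$-weights on $H^0(\cX_0, \cL_0^{\otimes k})$ developed in \cite{BHJ15}, this common value equals $\|(\cX, \cL)\|$. Dividing through by $\|(\cX, \cL)\|$ gives the pointwise inequality, and passing to the inf over $\phi_0$ and sup over $(\cX, \cL)$ completes the proof.

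The chief technical obstacle is the regularity of $\phi^t$: Phong-Sturm rays from a test configuration are only $C^{1,1}$ at best, so the pointwise derivative $\dot\phi^0$ and the Cauchy-Schwarz step must be interpreted through an $\varepsilon$-regularization (replacing the geodesic equation by the non-degenerate Monge-Amp\`ere equation on $X \times \{0 \leq \Re\tau \leq T\}$ with small positive right-hand side), followed by a passage to the limit using the convexity and lower semicontinuity of $D$ along with the $L^2$-lower semicontinuity of the Mabuchi speed. The essential algebro-analytic input is the identification of the constant Mabuchi speed with $\|(\cX, \cL)\|$, which is the content of \cite{BHJ15} and is the nontrivial step one is really invoking.
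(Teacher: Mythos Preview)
The paper does not reproduce a proof of this theorem; it is quoted from \cite{His12} with only the remark in the introduction that it follows ``by a simple application of H\"older's inequality.'' Your outline is essentially correct and is the natural Donaldson-style argument adapted to the Ding functional: convexity of $D$ along a geodesic ray (Berndtsson), identification of the asymptotic slope with $D^{\NA}$ (Berman, extended to weak geodesics in \cite{BHJ16}), Cauchy--Schwarz at $t=0$, and identification of the mean-zero $L^2$ speed with $\|(\cX,\cL)\|$.

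This differs in technique from \cite{His12}. Hisamoto's original argument goes through the limit of spectral measures (the Duistermaat--Heckman measure of the test configuration, realized as a limit of normalized weight distributions on $H^0(\cX_0,\cL_0^{\otimes k})$): one identifies both $D^{\NA}$ and $\|(\cX,\cL)\|$ as functionals of this limit measure and applies H\"older directly there, bypassing the geodesic ray entirely. Your route is more geometric and closer to Donaldson's proof of the Calabi lower bound \cite{Don05}; it buys a clean variational picture but imports heavier machinery (Berndtsson convexity, the slope formula, and the $C^{1,1}$ regularity issues you flag). Note also that the identification of the constant Mabuchi speed with $\|(\cX,\cL)\|$ is itself one of the main results of \cite{His12}, so your argument is not logically independent of that paper---rather, it reorganizes the proof by isolating the Duistermaat--Heckman limit as a black box and replacing the spectral-measure computation of $D^{\NA}$ with the Berman--Berndtsson variational input.

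One small point: Berman's slope formula as stated in Theorem~\ref{Ding energy vs Ding invariant} is for the smooth ray induced by a positively curved metric $H$, not for the weak geodesic ray; for the latter you need \cite{BHJ16} (or a comparison argument showing the two rays have the same asymptotic Ding slope). You should cite that rather than Berman alone.
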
 

We remark that if one varies the Ding functional over $\mu \in \fs$ we obtain the variation formula 
\begin{equation*}
\d \bigg( \frac{D^\NA(\mu)}{\norm{\mu}} \bigg)
= \frac{1}{\norm{\mu}} \bigg( D^\NA (\d \mu) -\frac{\langle \d\mu, \mu \rangle}{\langle \mu, \mu \rangle}D^\NA(\mu)\bigg) 
\end{equation*}
and hence the extremal vector field $\eta$ is characterized as the optimizer for the normalized Ding invariant over $\fs$. We expect that the Mabuchi flow achieves the equality in ($\ref{lower bound of Ricci-Calabi 2}$) in general.  We will prove this in the toric case in the next section.

\section{$L^2$ Convergence of the inverse Monge-Amp\`ere flow to the optimal destabilizer on toric Fano manifolds}\label{sec: toric}
Let $X$ be a toric Fano manifold of dimension $n$ polarized by $-K_{X}$.  $X$ being toric means that there is a torus $T \subset \Aut(X)$ of dimension $n$.  Let $S \subset T$ be the maximal compact subgroup.  Note that there is a natural lattice $N \subset \fs$ generated by the $1$-parameter subgroups ${\rm Hom}(S^1, S)$, and a dual lattice $M\subset \fs^{*}$.   If $\omega_0$ is an $S$-invariant K\"ahler form in the class $2 \pi c_1(X)$, then we get a moment map
\[
m: X \rightarrow \fs^{*}
\]
whose image $P$ is the associated moment polytope.  In our present setting, $P \subset \fs^{*}$ is convex polytope which is Delzant, reflexive and $0\in P$ is the only interior lattice point.  Let
\[
V_P := \int_{P} 1
\]
where, from now on, all unadorned integrals on $P$ are taken with respect to the Lebesgue measure. Any $\mu \in \fs$  is naturally identified with a linear function $a_{\mu}$ on $\fs^{*}$ by
\[
a_{\mu} := \langle \mu, \cdot \rangle - \frac{1}{V_P}\int_{P}\langle \mu, \cdot \rangle,
\]
where $\langle, \rangle$ is the natural pairing between $\fs, \fs^{*}$.  With this identification, the Futaki-Mabuchi inner product is given by
\[
\langle \mu_1, \mu_2 \rangle = \frac{1}{V} \int_{P} a_{\mu_1} a_{\mu_2}.
\]
According to Donaldson \cite{Don02}, $T$-equivariant test configurations of $(X,-K_{X})$ correspond to convex, rational, piecewise linear functions $f:P \rightarrow \mathbb{R}$.  For such a function Lemma~\ref{lem: inprtc} (see \cite{ZZ08}) implies that
\[
\langle (\cX,\cL), \mu \rangle = -\frac{1}{V_P} \int_{P}fa_{\mu}, \qquad \| (\cX, \cL)\|^2 = \frac{1}{V_P}\int_{P}f^2- \left(\frac{1}{V_P}\int_{P} f \right)^2.
\]
Note that the norm $\|(\cX,\cL)\|$ only agrees with the $L^2$ norm of $f$ when $f$ is normalized to have average zero.  Yao \cite[Theorem 5]{Yao17} gave a formula for the Ding-invariant for such a test configuration
\[
D^{\NA}(f) = -f(0) + \frac{1}{V_P}\int_{P}f.
\]
In what follows it will be useful to consider the modified non-Archimedean Ding functional.
\begin{dfn}
Let $g:P \rightarrow \mathbb{R}$ be any $L^2$ integrable function.  We define the $g$-modified non-Archimedean Ding functional by
\[
D^{\NA}_{g}(f) = -f(0) + \frac{1}{V_P}\int_{P}fg.
\]
\end{dfn}
\begin{rk} Note the $D^{\NA}_{g}(f)$ makes sense for any convex, $L^{2}$ integrable function $f:P\rightarrow \mathbb{R}$.
\end{rk}

Let $\eta \in \fs$ be the extremal vector field, and let $e$ be the associated linear function on $P$.  For convenience we set
\[
\ell := e+1.
\]
By definition, for any $T$-equivariant test configuration $(\cX,\cL)$ we have
\[
\begin{aligned}
D^{\NA}_{T}(\cX,\cL) &= D^{\NA}(\cX,\cL) - \langle (\cX,\cL) , \eta \rangle = D^{\NA}(f) + \frac{1}{V_P} \int_{P} f e\\
&= -f(0) + \frac{1}{V_P} \int_{X} f\ell = D^{\NA}_{\ell}(f)
\end{aligned}
\]

Yao \cite{Yao17} considered the optimization problem for the functions
\begin{equation*}
W(f):= \frac{D^\NA(f)}{\norm{f}} \ \ \ \text{and} \ \ \ W_\ell(f) := \frac{D^\NA_\ell(f)}{\norm{f}}, 
\end{equation*} 
defined on the space of $L^2$ integrable convex functions on $P$.  We first define
\begin{dfn}
The set of {\em balancing functions} on $P$ is the set
\[
\mathfrak{B} := \bigg\{ b \in L ^2(P) \ \bigg\vert \ b\geq0, \ \int_Pb=V_{P}, \ \text{and} \ \int_P bx_i=0 \ \text{for} \ 1\leq i \leq n \bigg\}.
\]
\end{dfn}

\begin{lem}[\cite{Yao17}]\label{lem: jensen}
For any $b \in \mathfrak{B}$ we have the Jensen-type inequality
\[
f(0) \leq \frac{1}{V_P}\int_P fb
\]
or any convex function $f$ on $P$.
\end{lem}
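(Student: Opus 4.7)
The plan is to recognize this as essentially Jensen's inequality via a supporting hyperplane at the origin. The hypotheses on $b$ say exactly that the measure $d\nu := V_P^{-1} b\, dx$ is a probability measure on $P$ whose barycenter is $0$; hence the claim $f(0) \leq \int f\, d\nu$ is the standard Jensen inequality for a convex function evaluated at the barycenter of a probability measure.

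To execute this concretely, I would first observe that since $P$ is reflexive with $0$ as its unique interior lattice point, we have $0 \in \mathrm{int}(P)$. A convex function $f:P \to \R$ is therefore finite and continuous at $0$, and it admits a subgradient there: there exists $v \in \R^n$ such that
\begin{equation*}
f(x) \geq f(0) + \langle v, x \rangle \qquad \text{for all } x \in P.
\end{equation*}
(If $f$ is merely $L^2$ and convex this follows from standard convex analysis; for the piecewise-linear test-configuration functions of interest it is immediate.)

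Next, I would multiply this inequality by the nonnegative function $b$ and integrate over $P$. Using $b \geq 0$ to preserve the inequality, and then applying the two defining properties $\int_P b = V_P$ and $\int_P b\, x_i = 0$ for each coordinate $i$, I obtain
\begin{equation*}
\int_P f\, b \ \geq\ f(0) \int_P b \ +\ \Big\langle v,\ \int_P x\, b(x)\, dx \Big\rangle \ =\ f(0)\, V_P\ +\ 0.
\end{equation*}
Dividing through by $V_P$ yields the asserted inequality $f(0) \leq V_P^{-1} \int_P fb$.

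There is no real obstacle here; the only mild subtlety is ensuring the existence of the subgradient at $0$ for the class of convex functions one wants to allow. For rational piecewise-linear convex $f$ (the case used to identify test configurations) this is trivial, and for general $L^2$ convex functions on $P$ it follows from the interiority of $0 \in P$ together with standard convex-analysis facts. I would simply state this once at the outset and refer to it when invoking the supporting affine function.
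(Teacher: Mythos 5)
Your argument is correct: the hypotheses on $b$ say precisely that $V_P^{-1}b\,dx$ is a probability measure with barycenter $0\in\mathrm{int}(P)$, and the supporting-hyperplane (subgradient) inequality at $0$, integrated against $b\geq 0$, gives the claim. The paper itself offers no proof, simply citing \cite{Yao17}, and your argument is the standard one underlying that reference, so there is nothing to compare beyond noting that your handling of the subgradient's existence at the interior point $0$ is the right way to make it rigorous for general $L^2$ convex $f$.
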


The main result of \cite{Yao17} is

\begin{thm}[\cite{Yao17}]\label{Yao}
There exists a piecewise linear convex function $d$ with the following properties: 
\begin{itemize} 
\item[$(1)$] 
The function $d$ is the unique minimizer of $W_\ell$ over the space of $L^2$ integrable convex functions satisfying the normalization 
$V_P D^\NA_\ell(d)=-\|d\|_{L^{2}(P)}^2$. 
The function $d$ satisfies $D^\NA_{d+\ell} (d+\ell)=0$, and 
\begin{equation*}
D^\NA_{d+\ell} (f) \geq 0 \qquad \text{ for all } f \in {\rm Conv}^{0}(P).
\end{equation*} 
In other words, $P$ is relatively stable for the $d$-modified non-Archimedean Ding functional. 
\item[$(2)$]
The function $d$ is orthogonal to the extremal affine function $\ell-1$ and $d +\ell-1$ is the unique minimizer of $W$ with normalization 
$V_P D^\NA(d+\ell-1)=-\|d+\ell-1\|_{L^{2}(P)}^2$. 
Moreover, $d+\ell$ is characterized as the $L^2$ minimal element in $\mathfrak{B}$. Then, $d+\ell$ is also characterized as the convex element achieving the equality $V_P b(0) = \int_P b^2$. 
\end{itemize} 
Furthermore,  $d+\ell$ is a simple convex function in the sense that $d+\ell = \max\{a, 0\}$ for some affine function $a$ on $P$. 
\end{thm}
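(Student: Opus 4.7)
The plan is to recast the entire theorem as a single strictly convex minimization in $L^2(P)$, namely the minimization of $\|b\|_{L^2(P)}$ over the closed convex balancing class $\mathfrak{B}$. The unique minimizer $b^\ast$ will coincide with $d + \ell$, and every enumerated property will follow from the KKT description of $b^\ast$ together with Jensen's inequality, plus one elementary Cauchy--Schwarz step. Two algebraic identities underpin the whole reduction. First, $\ell = e+1$ is characterized by $\frac{1}{V_P}\int_P a\,\ell = a(0)$ for every affine $a$ on $P$; equivalently, $D^{\NA}_\ell$ vanishes on all affine functions. Second, every $b \in \mathfrak{B}$ satisfies $\int_P b\,a = V_P a(0)$ for every affine $a$, as an immediate consequence of the balancing conditions. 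Applying the first identity to $a = e$ (using $\int_P e = 0$) gives the self-identity $\int_P e^2 = V_P e(0)$, and hence $\int_P \ell\,e = V_P e(0)$.

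For the first main step, $\mathfrak{B}$ is non-empty (take any symmetric bump centered at $0 \in P^\circ$ and normalize its mass), closed, and convex in $L^2(P)$, so the Hilbert projection theorem combined with strict convexity of $\|\cdot\|_{L^2}^2$ yields a unique $b^\ast \in \mathfrak{B}$ minimizing the $L^2$-norm. Applying the KKT conditions to the two linear constraints $\int b = V_P$, $\int bx_i = 0$ together with the pointwise obstacle $b \geq 0$ produces Lagrange multipliers and an affine function $a$ on $P$ so that $b^\ast = a$ on $\{b^\ast > 0\}$ and $a \leq 0$ on $\{b^\ast = 0\}$; equivalently $b^\ast = \max\{a, 0\}$, which is piecewise linear and convex. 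Setting $d := b^\ast - \ell$ then gives a piecewise linear convex function (a convex function minus an affine one), and the final assertion of the theorem --- that $d + \ell$ is a simple convex function --- is immediate.

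All remaining properties follow from Jensen's inequality applied to the probability measure $\mu^\ast := V_P^{-1}\,b^\ast\,dx$, whose barycenter is $0$ by the balancing conditions. For any convex $f$ on $P$, $D^{\NA}_{d+\ell}(f) = -f(0) + \int_P f\,d\mu^\ast \geq 0$, which is precisely the relative Ding-semistability of $d+\ell$. Setting $f = b^\ast$ saturates Jensen, because $b^\ast$ agrees with the affine function $a$ on $\supp \mu^\ast = \{a \geq 0\}$; this yields $V_P b^\ast(0) = \int_P (b^\ast)^2$, which is simultaneously $D^{\NA}_{d+\ell}(d+\ell) = 0$ and the equality characterization of $d+\ell$. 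The $L^2$-orthogonality $d \perp e$ drops out from $\int_P b^\ast\,e = V_P e(0) = \int_P \ell\,e$. To identify $d$ as the unique $W_\ell$-minimizer under the stated normalization, take any convex $\tilde{d}$ with $D^{\NA}_\ell(\tilde{d}) < 0$ and combine Jensen ($V_P\tilde{d}(0) \leq \int_P \tilde{d}\,b^\ast$) with Cauchy--Schwarz:
\[
-V_P D^{\NA}_\ell(\tilde{d}) \;=\; V_P\tilde{d}(0) - \int_P \tilde{d}\,\ell \;\leq\; \int_P \tilde{d}\,(b^\ast - \ell) \;\leq\; \|\tilde{d}\|\cdot\|d\|,
\]
which yields $|W_\ell(\tilde{d})| \leq \|d\|/V_P = |W_\ell(d)|$; equality in both Jensen and Cauchy--Schwarz forces $\tilde{d} = \lambda d$ for some $\lambda > 0$, and the normalization pins down $\lambda = 1$. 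The parallel assertion for $W$ and $d + \ell - 1$ follows by the same argument with the constant function $1$ in place of $\ell$.

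The principal technical obstacle is the KKT step: one must show that the dual variable for the obstacle $b \geq 0$ is a non-negative measure supported on the coincidence set $\{b^\ast = 0\}$, so that the Euler--Lagrange equation on $\{b^\ast > 0\}$ collapses to the single affine identity $b^\ast = a$ for a genuine affine function rather than merely a subgradient-valued object. This is tractable through standard $L^2$-duality, using the linearity of the equality constraints and the polyhedrality of $P$, but is the most delicate step. A secondary subtlety is the honest use of the affine invariance $D^{\NA}_\ell(\cdot + a) = D^{\NA}_\ell(\cdot)$, which is exactly what allows the minimization of $W_\ell$ under the normalization to be converted cleanly into the $L^2$-minimization in $\mathfrak{B}$ via the Jensen-plus-Cauchy--Schwarz argument above.
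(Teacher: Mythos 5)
The paper does not actually prove this theorem: it is quoted from \cite{Yao17} without argument, so there is no internal proof to compare you against. What you have written is, in substance, a self-contained reconstruction of the known strategy (Donaldson's and Sz\'ekelyhidi's scheme for the Calabi functional, adapted by Yao to the Ding setting): realize $d+\ell$ as the unique element of minimal $L^2$-norm in the closed convex balancing class $\mathfrak{B}$, extract the structure $\max\{a,0\}$ from the first-order conditions, and derive every inequality in the statement from Jensen's inequality (Lemma~\ref{lem: jensen}) plus Cauchy--Schwarz. Your algebraic bookkeeping checks out: $\int_P d=\int_P(b^{\ast}-\ell)=0$, so the normalization $V_PD^{\NA}_{\ell}(d)=-\|d\|^2$ is automatic; $\int_P d\,e=V_Pe(0)-V_Pe(0)=0$ gives the orthogonality; $a(0)=V_P^{-1}\int_P(b^{\ast})^2>0$ guarantees $b^{\ast}(0)=a(0)$ so that Jensen saturates and $D^{\NA}_{d+\ell}(d+\ell)=0$; and the chain $-V_PD^{\NA}_{\ell}(\tilde d)\le\int_P\tilde d\,(b^{\ast}-\ell)\le\|\tilde d\|\,\|d\|$ together with the equality analysis identifies $d$ (and, replacing $\ell$ by $1$, identifies $d+\ell-1$) as the unique normalized minimizer. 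This is correct and is essentially the argument of the cited source.

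Two points need to be firmed up. The first is the one you flag yourself: that the $L^2$-minimizer $b^{\ast}$ of $\mathfrak{B}$ equals $\max\{a,0\}$ for a genuine affine $a$. This closes without abstract KKT machinery by two-sided perturbation: for bounded $g$ orthogonal to ${\rm span}\{1,x_1,\dots,x_n\}$ and supported in $\{b^{\ast}\ge\epsilon\}$, both $b^{\ast}\pm tg$ lie in $\mathfrak{B}$ for small $t$, so $\langle b^{\ast},g\rangle=0$ and $b^{\ast}$ coincides on $\{b^{\ast}>0\}$ with its projection $a$ onto the affine functions; then one-sided perturbations $\psi\ge0$ supported on $\{b^{\ast}=0\}$, corrected inside $\{b^{\ast}>\epsilon\}$ to restore the moment constraints, give $a\le0$ there (one uses that $\{b^{\ast}>\epsilon\}$ is not contained in a hyperplane for small $\epsilon$, which holds since $\{b^{\ast}>0\}$ has positive measure). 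The second point is a genuine omission in part $(2)$: you verify that $d+\ell$ \emph{achieves} $V_Pb(0)=\int_Pb^2$ but not that it is the \emph{only} convex element of $\mathfrak{B}$ doing so. This follows from the same two ingredients: if $b\in\mathfrak{B}$ is convex with $V_Pb(0)=\int_Pb^2$, then Jensen against $V_P^{-1}b^{\ast}\,dx$ gives $\int_Pb^2=V_Pb(0)\le\int_Pbb^{\ast}\le\|b\|\,\|b^{\ast}\|$, hence $\|b\|\le\|b^{\ast}\|$ and so $\|b\|=\|b^{\ast}\|$ by minimality; Jensen against $V_P^{-1}b\,dx$ gives $\int_Pbb^{\ast}\ge V_Pb^{\ast}(0)=\|b^{\ast}\|^2$, whence $\|b-b^{\ast}\|^2\le\|b\|^2-\|b^{\ast}\|^2=0$. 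With these two additions your proposal is complete.
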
 

We now describe how to reduce the $MA^{-1}$-flow to an equation on the moment polytope; this material is standard and can be found, for example in \cite{Sze14}.  Let $X_0 = (\mathbb{C}^{*})^n$ be a dense, free open orbit of the torus $T$ inside of $X$.  Let $w_1,\ldots,w_n$ be coordinates on $(\mathbb{C}^{*})^n$.  On the covering space $\mathbb{C}^{n}$ we have coordinates $z_i = \xi_i +\sqrt{-1}\eta_i$ so that $e^{z_i} = w_i$.  An $(S^1)^n$ invariant metric $\omega$ on $X$ can be written on $X_0$ as $\omega = \dd \phi$ where $\phi$ depends only on $(\xi_1,\ldots,\xi_n)$, and $\phi: {\mathbb{R}^n} \rightarrow \mathbb{R}$ is strictly convex.  The moment map for the $(S^{1})^n$ action is 
\[
m(z_1,\ldots,z_n) = \left(\frac{\del \phi}{\del \xi_i}\right)_{1 \leq i \leq n }
\]
and the image of $m$ is the moment polytope $P\subset \mathbb{R}^{n}$.  The {\em symplectic potential} of $\omega$ is the Legendre transform of $\phi$.  Namely, for $x \in P$ there is a unique $\underline{\xi} \in \mathbb{R}^{n}$ such that $x = \nabla \phi(\underline{\xi})$ and we define
\[
u(x) = \sum_{i }x_i \underline{\xi_i} - \phi(\xi) = \sup_{\xi \in \mathbb{R}^{n}} \sum_{i }x_i \xi_i - \phi(\xi).
\]
The function $u: P\rightarrow \mathbb{R}$ is strictly convex function satisfies the Guillemin boundary conditions (see \cite{Sze14, Don02, Gui94}).  By general principles,  the Legendre transform of $u$ is $\phi$.  If we subtract from $u$ an affine function $u \mapsto u- \sum a_i x_i -b$ then by definition of the Legendre transform, $\phi \mapsto \phi(\xi_i+a_i)+b$.  Note that the change of variables $\xi_i \mapsto \xi_i +a_i$ corresponds to the action by $\tau = (e^{a_1}, \cdots, e^{a_n}) \in T$ so that $\psi(\xi_i+a_i)$ is the potential of the pulled-back metric $\omega_\tau=\tau^*\omega$. 
In particular we can subtract a linear function from $u$ so that it has the point $0 \in P$ as a minimizer, and this just corresponds to moving $\omega$ by an element of $T$.
\begin{dfn}
We say that a convex function $f: P\rightarrow \mathbb{R}$ is {\em normalized} if $f \geq 0$, and $f(0)=0$.
\end{dfn}

Any convex function $f$ on $P$ can be normalized by subtracting a supporting hyperplane at the origin.  In particular, if $f$ is $C^1$ and normalized then $\nabla f(0)=0$.  Furthermore, any symplectic potential $u$ can be normalized by using the $T$-action, which can be viewed as choice of good gauge.  In what follows we will denote by
\[
{\rm Conv}^{0}(P) := \left\{ u \in C^{0}(\overline{P}) : u \text{ is convex} \right\}.
\]
\cite[Proposition 3.3]{BB13} shows that every function $u \in {\rm Conv}^{0}(P)$ defines a positive current in $2 \pi c_1(X)$ with bounded potentials, however we will not need to use this fact. 

By the definition of the Legendre transform it is straightforward to compute that the Aubin-Yau functional is written on the polytope, in terms of the symplectic potential, as
\[
\delta E(u) = -\frac{1}{V_P}\int_{P} \delta u 
\]
and hence we have
\[
E(u) = -\frac{1}{V_P}\int_{P} u +C
\]
for some constant $C$.  Yao \cite{Yao17} showed that, up to an overall constant, the Ding functional is given by
\[
D(u) =  -\log \left(\int_{\mathbb{R}^{n}} e^{\phi- \inf_{\mathbb{R}^n} \phi} \right) - u(0) +\frac{1}{V_P} \int_{P} u.
\]
where $\phi$ is the Legendre transform of $u$.  We will also need the functional
\[
J_{T}(\omega) = \inf_{\tau \in T} \left( \sup_{X} \phi_{\tau} - E(\phi_{\tau})\right)
\]
where $\tau^{*}\omega_{\phi} = \omega_0 + \dd \phi_{\tau}$.  We have
\begin{lem}[Lemma 2.2 \cite{ZZ08}]
There is a uniform constant $C$ so that for any $(S^1)^n$ invariant K\"ahler metric $\omega$ we have
\[
J_{T}(\omega) \leq \frac{1}{V_P} \int_{P} \tilde{u} +C
\]
where $\tilde{u}$ is the normalized symplectic potential of $\omega$.  
\end{lem}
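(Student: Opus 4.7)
The plan is to use the $T$-action to reduce the infimum defining $J_T(\omega)$ to a single well-chosen representative whose symplectic potential is $\tilde u$, and then estimate $\sup_X \phi$ and $E(\phi)$ separately against $V_P^{-1}\int_P \tilde u$. The real translations $\xi \mapsto \xi + a$ coming from the non-compact part of $T$ dualise, via Legendre transform, to subtracting linear functions $a\cdot x$ from the symplectic potential $u$; combined with the freedom to add a constant to the K\"ahler potential (which translates $u$ by a constant), the $T$-action realises the subtraction of an arbitrary supporting affine function of $u$ at $0 \in P$. Since $0$ lies in the interior of $P$ (it is the unique interior lattice point of the reflexive polytope), the subgradient $\partial u(0)$ is non-empty, so there exist $\tau_0 \in T$ and an additive constant so that the symplectic potential of $\tau_0^{*}\omega$ equals $\tilde u$, i.e.\ is non-negative and vanishes at $0$.

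With this choice I would bound $\sup_X \phi_{\tau_0}$ by elementary convex analysis. Since $\tilde u \geq 0$, its Legendre transform $\Phi_{\tau_0}(\xi) = \sup_{x\in P}(\xi\cdot x - \tilde u(x))$ is dominated by the support function $h_P(\xi) := \sup_{x\in P}\xi\cdot x$. On the other hand $u_0$ is continuous on $\overline{P}$, hence $\phi_0(\xi) = \sup_{x\in P}(\xi\cdot x - u_0(x)) \geq h_P(\xi) - \sup_P u_0$. On the open orbit one has $\phi_{\tau_0} = \Phi_{\tau_0} - \phi_0$ after the constant normalisation arranged above, and subtracting the two pointwise bounds gives
\[
\sup_X \phi_{\tau_0} = \sup_{\mathbb{R}^n}(\Phi_{\tau_0} - \phi_0) \leq \sup_P u_0 =: C_1,
\]
where we use that $\phi_{\tau_0}$ is smooth on $X$ and $X_0$ is dense.

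For the Aubin--Yau energy, integrating the polytope variation formula $\delta E(u) = -V_P^{-1}\int_P \delta u$ from the reference $u_0$ (at which $E$ vanishes) yields
\[
E(\phi_{\tau_0}) = -\frac{1}{V_P}\int_P \tilde u + \frac{1}{V_P}\int_P u_0.
\]
Combining this with the sup bound gives
\[
J_T(\omega) \leq \sup_X \phi_{\tau_0} - E(\phi_{\tau_0}) \leq \frac{1}{V_P}\int_P \tilde u + \left(\sup_P u_0 - \frac{1}{V_P}\int_P u_0\right),
\]
which is the required inequality with $C := \sup_P u_0 - V_P^{-1}\int_P u_0$, a constant depending only on the reference. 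The only mildly delicate step is the dictionary between the $T$-action on $\omega$ and affine reparametrisations of $u$ in the first paragraph; once that is set up, the remaining inequalities are purely convex-analytic.
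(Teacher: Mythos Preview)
The paper does not give a proof of this lemma; it is simply quoted from Zhou--Zhu \cite{ZZ08}, so there is no in-paper argument to compare against. Your proof is correct and is essentially the standard toric argument one would expect: normalise via the real $T$-action (plus the harmless additive constant in $\phi$) so that the symplectic potential of a representative is $\tilde u$, then bound $\sup_X\phi_{\tau_0}$ by sandwiching with the support function $h_P$ (using $\Phi_{\tau_0}\le h_P$ from $\tilde u\ge 0$ and $\phi_0\ge h_P-\sup_P u_0$ from boundedness of $u_0$ on $\overline P$), and read off $E$ from the polytope formula $E(u)=-V_P^{-1}\int_P u + \text{const}$. The only point worth flagging is that the additive constant you invoke is not literally part of the $T$-action, but since $\sup_X\phi - E(\phi)$ is invariant under $\phi\mapsto\phi+c$ this is harmless, as you implicitly use. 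The resulting constant $C=\sup_P u_0 - V_P^{-1}\int_P u_0$ depends only on the reference, as required.
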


Following our discussion of the $g$-modified non-Archimedean Ding functional, it is natural to introduce the $g$-modified Ding functional
\[
\begin{aligned}
D_{g}(\omega) &= L(\phi) + \frac{1}{V_P} \int_{P}ug,\\
&=   -\log \left(\int_{\mathbb{R}^{n}} e^{-(\phi- \inf_{\mathbb{R}^n} \phi)} \right) - u(0) +\frac{1}{V_P} \int_{P} ug \\
&= -\log \left(\int_{\mathbb{R}^{n}} e^{-(\phi- \inf_{\mathbb{R}^n} \phi)} \right) +D_{g}^{\NA}(u)
\end{aligned}
\]
where, as before, $\phi$ is the Legendre transform of $u$.  Again, $D_{g}(u)$ is well-defined on ${\rm Conv}^{0}(P)$.  We will prove a coercivity result for the modified Ding energy, building on \cite{Nak17}. First, we prove a lemma which will play an important role in this section.

\begin{lem}\label{lem: sigBal}
Let $\omega_0$ be any $(S^{1})^n$ invariant metric, with symplectic potential $u_0$.  We define $\sigma: P\rightarrow \mathbb{R}$ by $\sigma_0(x) = e^{\rho_{0}(\xi)}$, where $\rho_0$ is the normalized Ricci potential of $\omega_0$, and $\xi = \nabla u_{0}(x)$; that is $x \longleftrightarrow \xi$ by the Legendre transform of $u_0$.  Then
\begin{enumerate}
\item $\sigma_0$ defines a positive balancing function.
\item The minimum of the modified Ding energy $D_{\sigma_{0}}(\cdot)$ over ${\rm Conv}^{0}(P)$ is attained by $u_0$.
\end{enumerate}
\end{lem}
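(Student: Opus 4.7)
The plan is to reduce both parts to statements on $P$ using the Legendre transform, starting from the explicit toric description of the Ricci potential. Writing $\omega_0=\dd \phi_0$ on the open orbit, the identity $\Ric(\omega_0)=-\dd \log\det(\phi_{0,jk})$ gives
\[
\rho_0(\xi)=-\log\det(\phi_{0,jk}(\xi))-\phi_0(\xi)+c_0,
\]
so after integrating along the compact $(S^1)^n$-fiber one has, up to the universal constant $(2\pi)^n n!$, $e^{\rho_0}\omega_0^n=e^{-\phi_0+c_0}\,d\xi$. Pushing forward along the moment map $\xi\mapsto x=\nabla\phi_0(\xi)$ and using $d\xi=\det(u_{0,ij})\,dx$ with $\det(\phi_{0,jk})\det(u_{0,ij})=1$ yields the explicit formula
\[
\sigma_0(x)=e^{-\phi_0(\xi(x))+c_0}\,\det(u_{0,ij}(x))>0,
\]
which is the common starting point for both claims.

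For part $(1)$, positivity is built in. The normalization $\int_X e^{\rho_0}\omega_0^n=V$ combined with the standard fiber computation $V=(2\pi)^n n!\,V_P$ gives $\int_P\sigma_0=V_P$. For the centering $\int_P x_i\,\sigma_0=0$, I change variables back to $\R^n$ via $x_i=\partial\phi_0/\partial\xi_i$ and write
\[
\int_P x_i\,\sigma_0\,dx \;=\; e^{c_0}\!\int_{\R^n}\frac{\partial \phi_0}{\partial \xi_i}\,e^{-\phi_0}\,d\xi \;=\; -e^{c_0}\!\int_{\R^n}\frac{\partial}{\partial \xi_i}\bigl(e^{-\phi_0}\bigr)\,d\xi,
\]
which vanishes by Fubini as the integral of a total derivative. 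Here one uses that $\phi_0$ is asymptotic to the support function of $P$; since $0\in\mathrm{int}\,P$, that support function is strictly positive in every direction, forcing $e^{-\phi_0}\to 0$ as $|\xi_i|\to\infty$ with the other coordinates held fixed, which kills the boundary contribution.

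For part $(2)$, the strategy is to show that $D_{\sigma_0}$ is convex on $\mathrm{Conv}^0(P)$ and that $u_0$ is a critical point. Writing $D_{\sigma_0}(u)=-\log\int_{\R^n}e^{-\phi(u)}\,d\xi+\frac{1}{V_P}\int_P u\sigma_0\,dx$ (modulo the $-u(0)$ normalization, which cancels against $\inf\phi$), the second term is linear in $u$. For the first, since the Legendre transform satisfies $\phi_t\leq(1-t)\phi_0+t\phi_1$ pointwise for $u_t=(1-t)u_0+tu_1$, one gets $e^{-\phi_t}\geq(e^{-\phi_0})^{1-t}(e^{-\phi_1})^{t}$, and H\"older's inequality yields
\[
\log\!\int e^{-\phi_t}\,d\xi\;\geq\;(1-t)\log\!\int e^{-\phi_0}\,d\xi+t\log\!\int e^{-\phi_1}\,d\xi,
\]
so $u\mapsto-\log\int e^{-\phi(u)}\,d\xi$ is convex and hence $D_{\sigma_0}$ is convex. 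To verify that $u_0$ is a critical point, I apply the Legendre identity $\delta\phi(\xi)=-\delta u(x(\xi))$ and the same change of variables as in $(1)$ to compute
\[
\delta\Bigl(-\log\!\int_{\R^n}\!e^{-\phi}\,d\xi\Bigr)\bigg|_{u=u_0}[\delta u]\;=\;-\frac{1}{V_P}\int_P\sigma_0\,\delta u\,dx,
\]
which exactly cancels the linear contribution $\frac{1}{V_P}\int_P\sigma_0\,\delta u\,dx$. Convexity then promotes ``critical point'' to ``global minimizer''.

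The main technical point is the integration-by-parts step in $(1)$: one must rigorously justify the vanishing of boundary terms at $|\xi|=\infty$, which rests on the Guillemin asymptotics of $u_0$ together with the hypothesis $0\in\mathrm{int}\,P$. Once that is in hand, the remainder is a clean variational computation on the polytope.
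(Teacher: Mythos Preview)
Your approach to part (1) is correct and is in fact more direct than the paper's: the paper computes $\int_P x_i\sigma_0$ by identifying it with the Futaki--Mabuchi pairing $\langle \eta,\mu_i\rangle$ and then invoking the defining property~\eqref{extremal vector} of the extremal vector field, whereas you bypass all of this with a one-line integration by parts on $\R^n$. Your version is self-contained and does not need the extremal vector field at all; the only cost is the boundary-term justification, which you rightly flag and which follows from the linear growth $\phi_0(\xi)\ge \e|\xi|-C$ coming from $0\in\mathrm{int}\,P$.

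There is, however, a genuine gap in your convexity argument for part (2). The pointwise bound $\phi_t(\xi)\le (1-t)\phi_0(\xi)+t\phi_1(\xi)$ is correct, and it gives
\[
\int e^{-\phi_t}\,d\xi \;\ge\; \int \bigl(e^{-\phi_0}\bigr)^{1-t}\bigl(e^{-\phi_1}\bigr)^{t}\,d\xi.
\]
But H\"older's inequality bounds the right-hand side \emph{from above} by $\bigl(\int e^{-\phi_0}\bigr)^{1-t}\bigl(\int e^{-\phi_1}\bigr)^{t}$, not from below, so the chain of inequalities does not close. What you need is a lower bound of the form $\int e^{-\phi_t}\ge \bigl(\int e^{-\phi_0}\bigr)^{1-t}\bigl(\int e^{-\phi_1}\bigr)^{t}$, and this is Pr\'ekopa's theorem (equivalently Pr\'ekopa--Leindler), not H\"older. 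The paper's argument runs as follows: since $\phi(\xi,t)=\sup_{x}\bigl[\langle x,\xi\rangle-(1-t)u_0(x)-tu_1(x)\bigr]$ is a supremum of affine functions of $(\xi,t)$, it is jointly convex on $\R^n\times[0,1]$; Pr\'ekopa's theorem then gives that $t\mapsto -\log\int_{\R^n}e^{-\phi(\xi,t)}\,d\xi$ is convex. Once you replace the H\"older step by this, the rest of your argument (vanishing first variation at $u_0$, hence global minimum) goes through exactly as written.
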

\begin{proof}
For the first part, observe that
\[
\frac{1}{V_P}\int_{P}\sigma_0 =\frac{1}{V} \int_{X}e^{\rho_0} \omega_0^{n} = 1.
\]
Next we observe that, for the vector field $\mu_i \in \fs$ corresponding to the linear function $x_i: P \rightarrow \mathbb{R}$, we have
\[
\begin{aligned}
\langle \eta, \mu_i \rangle &= \frac{1}{V}\int_{X} h_{\mu_i} P_{\omega_0} (e^{\rho_0}-1) \omega_0^{n}\\
& = \frac{1}{V}\int_{X}h_{\mu_i} (e^{\rho_0}-1) \omega_0^{n}\\
&  = \frac{1}{V_P}\int_{P}x_i (\sigma_0 -1).
\end{aligned}
\]
On the other hand, from the definition of the extremal vector field $\eta$, and  we have
\[
\langle \eta, \mu_i \rangle = D^{\NA}(\mu_i) = -\frac{1}{V_P} \int_{P} x_i
\]
and so it follows that $\int_{P}\sigma_0 x_i =0$.  Since $\sigma_0 >\epsilon> 0$ is clear, we have established $(1)$.  To prove $(2)$, we first observe that from the definition of $D_{\sigma_0}$ we have
\[
\delta D_{\sigma_0}(\omega_0)=0.
\]
Next we show that $D_{g}(u(t))$ is convex along convex combinations in ${\rm Conv}^{0}(P)$.  Given any convex function $u' \in {\rm Conv}^{0}(P)$ we consider $u_t := (1-t)u_0+tu'$.  It is automatic that $\int_{P} g u_t$ is affine in $t$. Taking the Legendre transform of $u_t$ for each fixed $t$ we get a function $\phi(x, t)$ which is convex on $\mathbb{R}^{n}\times [0,1]$.  Since $u_t(0) = -\inf_{\mathbb{R}^n} \phi(x,t)$ we have that
\[
D_{g}(u_t) = -\log \left( \int_{x\in \mathbb{R}^{n}} e^{-\phi(x,t)} \right) +\frac{1}{V_P}\int_{P} g u_t 
\]
and so the convexity follows from Prekopa's theorem.  Specializing to $g= \sigma_0$, we get that $D_{\sigma_0}(u(t))$ is convex along the curve $u(t)$.  This establishes $(2)$.  We remark that this last argument is just a special case of Berndtsson's fundamental result \cite{Bern11}.
\end{proof}

Next we show that for any balancing function $b\in \cB$ the $b$-modified Ding functional is coercive.  While this result does not play a role in the proof of Theorem~\ref{thm: toric thm}, it is of independent interest, particularly because of the explicit dependence of the constant $\epsilon$ on the choice of $b$ (cf. \cite{DR15}).  The proof is a small modification of an argument of Nakamura \cite{Nak17}.

\begin{prop}
For any positive continuous balancing function $b \in \cB$ with $\inf_{P}b >0$ there are positive constants $\epsilon, C$ such that
\[
D_{b}(\omega) \geq \epsilon J_{T}(\omega)-C.
\]
Furthermore, we can take $\epsilon$ as close to $\inf_{P} b$ as we like.
\end{prop}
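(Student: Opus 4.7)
The approach is to exploit the $T$-invariance of $D_b$ to reduce to a convenient gauge on the symplectic potential, and then to estimate separately the two pieces of
\begin{equation*}
D_b(\omega) \;=\; -\log \int_{\mathbb{R}^n} e^{-\phi(\xi)}\,d\xi \;+\; \frac{1}{V_P}\int_P u\,b,
\end{equation*}
a form which is equivalent to the one given in the paper once the identity $\inf_{\mathbb{R}^n}\phi = -u(0)$ is applied to absorb the $-u(0)$ term. The argument is a modest adaptation of Nakamura's coercivity theorem \cite{Nak17} for the ordinary Ding functional.

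First observe that $D_b$ is $T$-invariant: a torus gauge transformation with weight $a\in \fs$ sends $\phi(\xi)\mapsto \phi(\xi+a)$ and $u(x) \mapsto u(x) - a\cdot x$; the log-term is plainly translation-invariant in $\xi$, and the polytope integral changes by $-a\cdot \int_P x\,b = 0$ since $b$ is balancing. Adding a constant to $u$ is also harmless, as the shifts of the log-term and of $\int_P u\,b$ cancel by $\int_P b = V_P$. Hence we may put $\omega$ in the normalized gauge $u\geq 0$, $u(0)=0$ (equivalently $\phi(0) = \inf_{\mathbb{R}^n}\phi = 0$), and in this gauge the Zhou--Zhu lemma gives $J_T(\omega) \leq \frac{1}{V_P}\int_P u + C_0$.

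In the normalized gauge the polytope integral is immediate: since $u\geq 0$ on $P$ and $b \geq b_0 := \inf_P b > 0$,
\begin{equation*}
\frac{1}{V_P}\int_P u\,b \;\geq\; b_0 \cdot \frac{1}{V_P}\int_P u.
\end{equation*}
The chief technical obstacle is a sublinear upper bound on the log-term: one must show that for every $\delta > 0$ there exists $C_\delta = C_\delta(P)$ such that
\begin{equation*}
\log \int_{\mathbb{R}^n} e^{-\phi(\xi)}\,d\xi \;\leq\; \delta \cdot \frac{1}{V_P}\int_P u + C_\delta
\end{equation*}
for every normalized $u$. Following Nakamura, the plan is to apply the layer-cake formula $\int e^{-\phi} = \int_0^\infty e^{-s}|\{\phi \leq s\}|\,ds$ and bound the sublevel-set volumes $|\{\phi\leq s\}|$ via Legendre duality against $|\{u\leq s\}|$; since $u$ is convex with $u(0)=0$ and $0\in \operatorname{int} P$, the sets $\{u\leq s\}$ can be estimated from below in terms of $s$ and $\int_P u$ by a direct convex-geometric comparison. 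Splitting the $s$-integration at a large threshold $s_0 = s_0(\delta)$ and using the exponential decay $e^{-s}$ then yields the sublinear error with arbitrarily small slope $\delta$.

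Combining these estimates in the normalized gauge gives
\begin{equation*}
D_b(\omega) \;\geq\; (b_0 - \delta)\cdot \frac{1}{V_P}\int_P u - C_\delta \;\geq\; (b_0 - \delta)\,J_T(\omega) - C,
\end{equation*}
so setting $\epsilon := b_0 - \delta$ produces the required inequality with $\epsilon$ arbitrarily close to $\inf_P b$. The gauge reduction and the pointwise bound on $\int_P u\,b$ are essentially trivial; the sharp sublinear estimate on the log-term is the one delicate step.
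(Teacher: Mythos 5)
Your reduction is exactly the paper's: gauge-fix $u$ to be normalized using the $T$-invariance of $D_b$ (which, as you say, follows from $\int_P b = V_P$ and $\int_P x_i\, b = 0$), bound the polytope term below by $\inf_P b\cdot\frac{1}{V_P}\int_P u$, invoke the Zhou--Zhu lemma to compare $\frac{1}{V_P}\int_P u$ with $J_T$, and reduce everything to the sublinear bound
\[
\log\int_{\R^n}e^{-(\phi-\inf\phi)}\,d\xi\;\leq\;\frac{\delta}{V_P}\int_P u+C_\delta
\]
for normalized $u$ and arbitrary $\delta>0$. This last inequality is precisely Corollary~\ref{cor: toricEasy} combined with $D^{\NA}_{\sigma_0}(u)\leq\frac{\sup_P\sigma_0}{V_P}\int_Pu$, and it is here that you and the paper part ways. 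The paper obtains it from two soft ingredients: the rescaling identity~\eqref{eq: covLeg}, which converts $-\log\int e^{-(\phi-\inf\phi)}$ into $-\log\int e^{-(\phi_K-\inf\phi_K)}-n\log K$ with $\phi_K$ the Legendre transform of $u/K$, and Lemma~\ref{lem: sigBal}, which says $D_{\sigma_0}$ is bounded below on ${\rm Conv}^0(P)$ (a consequence of Prekopa's theorem plus the criticality of the reference potential $u_0$). Applying that lower bound to $u/K$ and taking $K$ large produces the arbitrarily small slope.

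Your proposed substitute for this step --- layer-cake plus Legendre duality of sublevel sets --- is where the gap sits. A lower bound on the volume $\left|\{u\leq s\}\right|$ in terms of $s$ and $\int_P u$ does not by itself control $\left|\{\phi\leq s\}\right|$ from above: the natural inclusion is $\{\phi\leq s\}\subseteq(t+s)\,\{u\leq t\}^\circ$, and bounding the volume of the polar body $\{u\leq t\}^\circ$ requires more than a volume lower bound on $\{u\leq t\}$, since $|K|\,|K^\circ|$ is unbounded above when the origin sits near $\partial K$ (the Santal\'o inequality holds only for suitably centered bodies). A normalized $u$ can have sublevel sets of respectable volume that are extremely lopsided about the origin, so you would need to control the distance from $0$ to $\partial\{u\leq t\}$ in every direction in terms of $\int_P u$, a genuinely harder convex-geometric estimate than the one you assert. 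The statement you want is true, but as written your sketch does not close; either carry out that estimate in detail or replace it with the scaling-plus-Prekopa argument above, which sidesteps the issue entirely.
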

\begin{proof}
Fix $\epsilon_0 = \inf_{P} b >0$.  Let $u$ be any square integrable convex function on P.  If $g$ is any affine function on $P$, then it follows from the definition of a balancing function that
\[
D^{\NA}_{b}(u+g) = D^{\NA}_{b}(u).
\]
By subtracting a supporting tangent plane from $u$ we may assume that $u \geq 0$ and $u(0)=0$; in other words, $u$ is normalized.  For normalized $u$ we clearly have
\[
D^{\NA}_{b}(u) \geq \epsilon_0 \int_{P} u.
\]
Let $u$ be any smooth normalized symplectic potential, and let $\phi$ be the Legendre transform of $u$.  Then there is a constant $C$ so that we have
\[
\begin{aligned}
|D_{b}^{\NA}(u) - D^{\NA}_{\sigma_0}(u)| \leq C \int_{P} u &= C(k+1) \int_{P}u - Ck \int_{P}u\\
&\leq C(k+1) \epsilon_{0}^{-1} D_{b}^{\NA}(u) - Ck\int_{P} u.
\end{aligned}
\]
Letting $K = C(k+1)\epsilon_0^{-1}+1$ we have
\[
\begin{aligned}
D_{b}(u) &\geq -\log \left(\int_{\mathbb{R}^{n}}e^{-(\phi-\inf \phi)}\right)+ \frac{1}{K}D^{\NA}_{\sigma_0}(u) + \frac{Ck}{K} \int_{P}u\\
&\geq -\log \left(\int_{\mathbb{R}^{n}}e^{-(\phi_{K}-\inf \phi_{K})}\right)+ D^{\NA}_{\sigma_0}(\frac{u}{K}) + \frac{Ck}{K} \int_{P}u -n\log K\\
&= D_{\sigma_0}(\frac{u}{K})+ \frac{Ck}{K} \int_{P}u -n\log K
\end{aligned}
\]
where 
\[
\phi_{K}(\xi) = \frac{1}{K}\phi(K \xi)
\]
is the Legendre transform of $u/K$. In passing from the first line to the second line we used that
\begin{equation}\label{eq: covLeg}
\begin{aligned}
 -\log \left(\int_{\mathbb{R}^{n}}e^{-(\phi-\inf \phi)}\right) &\geq -\log \left(\int_{\mathbb{R}^{n}} e^{-\frac{1}{K}(\phi-\inf \phi)}\right)\\
 &= -\log \left(\int_{\mathbb{R}^{n}} e^{-(\phi_{K}-\inf \phi_{K})}\right) - n\log K.
 \end{aligned}
 \end{equation}
Thanks to the fact that $u/K \in {\rm Conv}^{0}(P)$, Lemma~\ref{lem: sigBal} implies that
\[
D_{\sigma_0}(\frac{u}{K}) \geq D_{\sigma_0}(u_0),
\] 
and so 
\[
D_{b}(u) \geq \frac{Ck}{K} \int_P u - n\log K -C.
\]

For any $\epsilon < \epsilon_0$ we can choose $k$ sufficiently large so that
\[
D_{b}(u) \geq \epsilon \int_{P}u -C.
\]
Finally, we note that from the definition of a balancing function $D_{b}(u)$ is unchanged by adding a linear function, as is $J_{T}$.  The proposition follows.
\end{proof}
We note two corollaries of the proof which will be used in the proof of Theorem~\ref{thm: toric thm}.

\begin{cor}\label{cor: toricDNA}
For any positive continuous balancing function $b \in \cB$ with $\inf_{P}b >0$, there are positive constants $\epsilon, C$ such that
\[
D_{b}(u) \geq \frac{\epsilon}{\sup_{P}b}D_{b}^{\NA}(u)-C.
\]
Furthermore, we can take $\epsilon$ as close to $\inf_{P} b$ as we like.
\end{cor}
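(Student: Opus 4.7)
My plan is to deduce the corollary directly from an intermediate estimate that already appears in the proof of the preceding proposition. That proof extracts, for normalized $u \in {\rm Conv}^0(P)$ (meaning $u \geq 0$ and $u(0) = 0$), the bound $D_b(u) \geq \epsilon' \int_P u - C$ with $\epsilon'$ arbitrarily close to $\inf_P b$. Combined with an elementary upper bound for $D_b^{\NA}(u)$ in terms of $\int_P u$, this will give the required inequality.

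First I would reduce to the case of normalized $u$. The balancing conditions $\int_P b = V_P$ and $\int_P x_i b = 0$ imply at once that $D_b^{\NA}$ is invariant under the addition of an affine function; thus we may subtract a supporting hyperplane of $u$ at $0 \in P$ to reduce to normalized $u$ without changing $D_b^{\NA}(u)$. For such $u$, the key observation is the trivial upper bound
\[
D_b^{\NA}(u) = \frac{1}{V_P}\int_P u\, b \leq \frac{\sup_P b}{V_P} \int_P u,
\]
equivalently $\int_P u \geq \frac{V_P}{\sup_P b} D_b^{\NA}(u)$. Inserting this into the proposition's intermediate estimate yields
\[
D_b(u) \geq \frac{\epsilon' V_P}{\sup_P b} D_b^{\NA}(u) - C,
\]
and absorbing the dimensional factor $V_P$ into the constant $\epsilon := \epsilon' V_P$ gives the claim; the flexibility in choosing $\epsilon$ close to $\inf_P b$ is inherited from the analogous flexibility for $\epsilon'$ in the proposition.

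The one subtle point to watch is compatibility of the normalization step with the final inequality: while $D_b^{\NA}$ is affine-shift invariant, $D_b$ itself transforms by an additive constant under $u \mapsto u + g$. This does not cause serious trouble here, because one can either work with the normalized representative from the start (the natural choice for symplectic potentials modulo the $T$-action) or absorb the constant into $C$. Since in practice the corollary will be applied to normalized potentials arising along the flow, this bookkeeping is harmless and the estimate passes through.
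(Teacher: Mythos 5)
Your argument is correct and is essentially the paper's own proof: reduce to normalized $u$, use the pointwise bound $bu\leq(\sup_P b)\,u$ for $u\geq 0$ to get $D_b^{\NA}(u)\leq \frac{\sup_P b}{V_P}\int_P u$, and feed this into the intermediate estimate $D_b(u)\geq \epsilon'\int_P u - C$ from the proof of the preceding proposition (you are in fact more careful than the paper about the factor of $V_P$). Your only hedge is unnecessary: $D_b$ is genuinely invariant, not merely invariant up to an additive constant, under $u\mapsto u+g$ for $g$ affine, since $-\log\bigl(\int_{\R^n}e^{-(\phi-\inf\phi)}\bigr)$ is unchanged (the Legendre transform just translates and shifts, and the shift is cancelled by subtracting $\inf\phi$) while $-u(0)+\frac{1}{V_P}\int_P ub$ is unchanged by the balancing conditions — which is good, because your fallback of ``absorbing the constant into $C$'' would not work if that constant depended on the supporting hyperplane of $u$, hence on $u$ itself.
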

\begin{proof}
First note that if $u$ is normalized then we have
\[
\epsilon \int_{P}u \geq \frac{\epsilon}{\sup_{P}b} \int_{P}bu =\frac{\epsilon}{\sup_{P}b} D^{\NA}_{b}(u),
\]
which proves the corollary when $u$ is normalized.  Since both sides of the inequality are unchanged by adding an affine function, the corollary follows.
\end{proof}

\begin{cor}\label{cor: toricEasy}
For any $K\geq 1$ there exists a constant $C_K$ such that
\[
-\log \left(\int_{\mathbb{R}^n} e^{-(\phi -\inf \phi)} \right) \geq -\frac{1}{K} D^{\NA}_{\sigma_0}(u) - C_K.
\]
\end{cor}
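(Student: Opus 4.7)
The plan is to extract the corollary from the chain of inequalities already developed in the proof of the previous proposition, isolating only the ingredients that do not require a balancing hypothesis on $b$.

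First, I would reduce to the case in which the symplectic potential $u$ is normalized, i.e.\ $u\ge 0$ and $u(0)=0$. Both sides of the desired inequality are invariant under replacing $u$ by $u+a$ for an affine function $a$ on $P$. Indeed, on the left-hand side, adding a constant to $u$ leaves $\phi$ unchanged up to a constant (so $\phi-\inf\phi$ is fixed) while adding a linear function to $u$ translates $\phi$ in its argument (so the integral $\int_{\mathbb{R}^n}e^{-(\phi-\inf\phi)}$ is preserved). On the right-hand side, $D^{\NA}_{\sigma_0}$ is invariant under adding affine functions precisely because $\sigma_0\in\mathfrak B$ is balancing (total mass $V_P$ and vanishing first moments) by Lemma~\ref{lem: sigBal}(1). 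Subtracting a supporting tangent plane at $0\in P$ then normalizes $u$.

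Once $u$ is normalized, $u/K\in\mathrm{Conv}^{0}(P)$ as well, so Lemma~\ref{lem: sigBal}(2) yields
\begin{equation*}
D_{\sigma_0}(u/K)\;\ge\;D_{\sigma_0}(u_0).
\end{equation*}
Unpacking this through the identity $D_{\sigma_0}(v)=-\log\!\int_{\mathbb{R}^n}e^{-(\psi-\inf\psi)}+D^{\NA}_{\sigma_0}(v)$, applied with $v=u/K$ and $\psi=\phi_K$ the Legendre transform of $u/K$, gives
\begin{equation*}
-\log\!\int_{\mathbb{R}^n}e^{-(\phi_K-\inf\phi_K)}\;\ge\;D_{\sigma_0}(u_0)-D^{\NA}_{\sigma_0}(u/K)\;=\;D_{\sigma_0}(u_0)-\tfrac{1}{K}D^{\NA}_{\sigma_0}(u),
\end{equation*}
where in the last step I used that $D^{\NA}_{\sigma_0}(u/K)=\tfrac{1}{K}D^{\NA}_{\sigma_0}(u)$ when $u$ is normalized (so that $(u/K)(0)=0$ and $\int_P(u/K)\sigma_0=\tfrac1K\int_P u\sigma_0$).

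Finally, I combine with the rescaling inequality~\eqref{eq: covLeg} from the preceding proof, namely
\begin{equation*}
-\log\!\int_{\mathbb{R}^n}e^{-(\phi-\inf\phi)}\;\ge\;-\log\!\int_{\mathbb{R}^n}e^{-(\phi_K-\inf\phi_K)}-n\log K,
\end{equation*}
to conclude
\begin{equation*}
-\log\!\int_{\mathbb{R}^n}e^{-(\phi-\inf\phi)}\;\ge\;-\tfrac{1}{K}D^{\NA}_{\sigma_0}(u)+D_{\sigma_0}(u_0)-n\log K,
\end{equation*}
so we may take $C_K:=n\log K-D_{\sigma_0}(u_0)$. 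No genuinely new analytic input is required; the only point that needs care is the affine invariance step, which relies crucially on the balancing property of $\sigma_0$ established in Lemma~\ref{lem: sigBal}(1). There is no real obstacle, since all the pieces already appear inside the proof of the preceding proposition.
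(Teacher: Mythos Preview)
Your proposal is correct and follows essentially the same route as the paper: use Lemma~\ref{lem: sigBal}(2) to bound $D_{\sigma_0}(u/K)$ from below, split off $D^{\NA}_{\sigma_0}(u/K)=\tfrac1K D^{\NA}_{\sigma_0}(u)$, and then apply the rescaling inequality~\eqref{eq: covLeg}. The only minor point is that your normalization step is unnecessary: the identity $D^{\NA}_{\sigma_0}(u/K)=\tfrac1K D^{\NA}_{\sigma_0}(u)$ holds for every $u\in{\rm Conv}^0(P)$ simply because $D^{\NA}_{\sigma_0}(u)=-u(0)+\tfrac{1}{V_P}\int_P u\,\sigma_0$ is linear in $u$, so the paper dispenses with that reduction.
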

\begin{proof}
By Lemma~\ref{lem: sigBal}, the modified Ding functional $D_{\sigma_0}$ is bounded from below on ${\rm Conv}^{0}(P)$, and hence, for any $u\in {\rm Conv}^{0}(P)$ we have 
\[
\begin{aligned}
-\log \left(\int_{\mathbb{R}^n} e^{-(\phi_{K} -\inf \phi_{K})} \right)& =D_{\sigma_0}(\frac{u}{K}) -D^{\NA}_{\sigma_0}(\frac{u}{K})\\
& \geq -\frac{1}{K}D^{\NA}_{\sigma_0}(u) -  C.
\end{aligned}
\]
Where $\phi_{K}(\xi) = \frac{1}{K} \phi(K\xi)$ is the Legendre transform of $u/K$, with $\phi$ the Legendre transform of $u$.  We now use estimate~\eqref{eq: covLeg} to conclude.
\end{proof}

We now use our results to study the $MA^{-1}$-flow on toric manifolds.  A standard computation shows that, in terms of the symplectic potential, the flow is
\[
\ddt u = \sigma(t) -1
\]
where $\sigma(x,t) = e^{\rho(\xi,t)}$ and we use the gradient map of $u(t)$ to identify $x$ and $\xi$.  Along the $MA^{-1}$-flow the $g$-modified Ding functional satisfies
\begin{equation}\label{eq: modDingflow}
\begin{aligned}
\ddt D_{g}(u) &= -\frac{1}{V_{P}}\int_{P}(\sigma-1)^2 + \frac{1}{V_P}\int_{P}(\sigma-1)(g-1)\\
&= -\frac{1}{V_{P}}\int_{P}(\sigma-1)^2 + \frac{1}{V_P}\int_{P}(\sigma-1)g\\
&= \frac{1}{V_P} \int_{P}(\sigma-1)(g-\sigma)
\end{aligned}
\end{equation}
where we used that $\int_{P}\sigma = V_{P}$.  Note that
\[
R(t) = \frac{1}{V_P}\int_{P}(\sigma-1)^2 
\]
is just the Ricci-Calabi energy of the K\"ahler metric associated to $u(t)$.  From the second line of~\eqref{eq: modDingflow} and H\"olders inequality we have
\[
\ddt D_{g}(u) \leq -R(t) + \frac{1}{\sqrt{V_P}}\|g\|_{L^{2}(P)}R(t)^{\frac{1}{2}}.
\]
Thanks to the fact that $R(t)$ is decreasing along the flow by Corollary~\ref{cor: DconvFlow} there is uniform constant $C$, depending only on the initial data and $g$ so that
\begin{equation}\label{eq: toriclinub}
D_{g}(u) \leq C(1+t).
\end{equation}
Write
\[
(\sigma -1)(g-\sigma) = -(\sigma-g)^2 + g^2 -g\sigma -g +\sigma.
\]
By Theorem~\ref{Yao} the maximal destabilizer $d+\ell$ is characterized as the convex balancing function satisfying
\[
\frac{1}{V_{P}}\int_{P}(d+\ell)^2 =(d+\ell)(0) 
\]
and so substituting $g=d+\ell$ into~\eqref{eq: modDingflow}
\[
\ddt D_{d+\ell}(u) = -\frac{1}{V_P} \int_{P}(\sigma-(d+\ell))^2 -\left( (d+\ell)(0) - \frac{1}{V_{P}}\int_{P}(d+\ell)\sigma\right)
\]
where we used that $\int_{P} (d+\ell) = V_P = \int_{P}\sigma$.  On the other hand, by Lemma~\ref{lem: sigBal}, $\sigma$ is a balancing function, and so by Lemma~\ref{lem: jensen} it satisfies Jensen's inequality
\[
f(0) \leq \frac{1}{V_P} \int_{P} f\sigma
\]
for any convex $f$.  Since $d+\ell$ is convex, we conclude that
\begin{equation}\label{eq: keyToric}
\ddt D_{d+\ell}(u) \leq -\frac{1}{V_P} \int_{P}(\sigma-(d+\ell))^2. 
\end{equation}
We can now prove Theorem~\ref{thm: toric thm}.
\begin{proof}[Proof of Theorem~\ref{thm: toric thm}]
We begin by showing that $\liminf_{t\rightarrow \infty} \|\sigma(t)- (d+\ell)\|_{L^{2}(P)} =0$.  Suppose not.  Then by~\eqref{eq: keyToric} there are $\delta, T >0$, so that for all $t>T$ we have
\[
\ddt D_{d+\ell}(u) \leq -\delta
\]
and hence $D_{d+\ell}(u) \leq -\delta t + C'$ for $t \geq T$.  On the other hand by Corollary~\ref{cor: toricEasy} we have
\[
\begin{aligned}
D_{d+\ell}(u) &= -\log \left( \int_{x\in \mathbb{R}^{n}} e^{-(\phi(x,t) - \inf_{x\in \mathbb{R}^{n}} \phi(x,t))}\right) +D^{\NA}_{d+\ell}(u)\\
&\geq -\frac{1}{K} D^{\NA}_{\sigma_0}(u)- C_{K} + D^{\NA}_{d+\ell}(u),
\end{aligned}
\]
for any $K\geq 1$.  Since $d+\ell$ is a balancing function we have $D^{\NA}_{d+\ell}(u) \geq 0$ and so thanks to ~\eqref{eq: toriclinub} and Corollary~\ref{cor: toricDNA} we have
\[
D_{d+\ell}(u) \geq -\frac{1}{K}(C(1+t)) - C_{K}.
\]
Taking $K$ sufficiently large so that $C/K \ll \delta$, we obtain contradiction.  Thus, there is a sequence of times $t_{k} \rightarrow +\infty$ so that $\sigma(t_k) -1 \longrightarrow d+e$ in $L^{2}(P)$.  In particular, we have
\[
\lim_{k\rightarrow \infty} \|\sigma(t_k) -1\|_{L^{2}(P)} = \|d+e\|_{L^{2}(P)}.
\]
Since $\|\sigma(t)-1\|_{L^{2}(P)}$ is decreasing along the flow (as the Ricci-Calabi energy decreases), we conclude that $\|\sigma(t)-1\|_{L^{2}(P)} \rightarrow  \|d+e\|_{L^{2}(P)}$.  Using that $\int_{P} \sigma = V_P = \int_{P} (d+\ell)$ we get
\[
\|\sigma(t)\|_{L^{2}(P)} \rightarrow \|d+\ell\|_{L^{2}(P)}.
\]

We now extend the convergence to the whole sequence.  By the parallelogram identity we have
\[
\|\sigma -(d+\ell)\|_{L^{2}(P)}^2 = 2 \left( \|\sigma\|_{L^{2}(P)}^{2} + \|d+\ell\|_{L^{2}(P)}^2\right) - \|\sigma +(d+\ell)\|^{2}_{L^{2}(P)}.
\]
By Theorem~\ref{Yao}, $d+\ell$ is the balancing function with minimal $L^2$ norm.  But by Lemma~\ref{lem: sigBal} $\sigma$ is also a balancing function.  Since the set of balancing functions is clearly convex, we have
\[
\| \frac{\sigma +(d+\ell)}{2}\|_{L^{2}(P)}^2 \geq \|d+\ell\|^{2}_{L^{2}(P)}
\]
and so it follows that
\[
\|\sigma -(d+\ell)\|_{L^{2}(P)}^2 \leq  2 \left( \|\sigma\|^{2} - \|d+\ell\|_{L^{2}(P)}^2\right). 
\]
But we have already shown that the right hand side converges to zero.  Thus $\sigma-1$ converges to $d+e$ in $L^{2}(P)$.  We get
\[
\begin{aligned}
\lim_{t\rightarrow \infty} R(t)^{1/2} &= \lim_{t\rightarrow \infty} \bigg[ \frac{1}{V_P} \int_{P} (\sigma -1)^{2} \bigg]^{1/2} \\
& = \frac{1}{V_P^{1/2}} \|d+e\|_{L^{2}(P)} = \sup_{f} \frac{-D^{\NA}(f)}{V_P^{-1/2}\|f\|}
\end{aligned}
\]
where the sup is over all $L^2$ integrable convex functions on $P$.
\end{proof}

We finish by making some remarks on the picture suggested by Theorem~\ref{thm: toric thm}.  Suppose $X$ is an unstable toric Fano manifold, and let $d+\ell \ne 0$ be the maximal destabilizer.  By Theorem~\ref{Yao} we have
\[
\int_{P}(d+\ell)^{2} = V_P(d+\ell)(0), \qquad d+\ell = \max \{a, 0\}
\]
where $a$ is some affine function.  The set $a \geq 0$ defines a subpolytope $P'\subset P$ with $0\in P'$.  Furthermore, $d+\ell|_{P'} = a|_{P'}$ is a linear function.  If $a$ is rational, then $P'$ is a toric log Fano variety \cite[Proposition 3.2]{BB13}.  By \cite{Yao17}, the polytope $P'$ is relatively stable with respect to the affine function $d+\ell$, and hence we expect it to admit a (singular) Mabuchi soliton with extremal vector field $d+\ell$.  If $P\cap (P')^c$ has positive measure then some mass, equal to the Lebesgue measure of the complement of $P'$ in $P$, must be lost in the limit, and we expect this to be the result of the weak limit of the flow developing non-trivial Lelong numbers.  Note that in the unstable case the test configuration corresponding to the destabilizer $d+\ell = \max\{a,0\}$ is non-trivial and has non-normal central fiber.  On the other hand, the degeneration produced by Chen-Sun-Wang \cite{CSW15} using the K\"ahler-Ricci flow is always a product test configuration.

Finally, we remark that by Theorem~\ref{thm: toric thm}, in the unstable case we see that the Ricci potential must diverge to $-\infty$.  In particular, the analogue of Perelman's estimates for the K\"ahler-Ricci flow do not hold for the $MA^{-1}$-flow.

\end{document}